\documentclass[perprint]{elsarticle}

\setlength{\parindent}{0cm}

\usepackage{amssymb}
\usepackage{amsthm}
\usepackage{amsmath}
\usepackage{hyperref}
\usepackage{xypic}
\usepackage{color}

\newtheorem{SATZ}{Theorem}[section]

\newtheorem{LEMMA}[SATZ]{Lemma}
\newtheorem{DEF}[SATZ]{Definition}

\newtheorem{BEISPIEL}[SATZ]{Example}

\newtheorem{KOR}[SATZ]{Corollary}

\newtheorem{BEM}[SATZ]{Remark}

\newtheoremstyle{bare}        
  {}            
  {}            
  {\normalfont}                 
  {}                            
  {\bfseries}                   
  {}                            
  {.0em}                           
  {\thmnumber{#2}. \thmnote{ \normalfont\textsc{(#3)}}} 

\theoremstyle{bare}
\newtheorem{PAR}[SATZ]{}

\newcommand{\M}[1]{ \left(\begin{matrix} #1 \end{matrix} \right) }

\newcommand{\R}{ \mathbb{R} }
\newcommand{\C}{ \mathbb{C} }
\newcommand{\Q}{ \mathbb{Q} }
\newcommand{\Qp}{ {\mathbb{Q}_p} }

\newcommand{\Z}{ \mathbb{Z} }
\newcommand{\Zh}{ {\widehat{\mathbb{Z}}} }
\newcommand{\Zp}{ {\mathbb{Z}_p} }

\newcommand{\Zpp}{ {\mathbb{Z}_{(p)}} }

\newcommand{\Gm}{ {\mathbb{G}_m} }

\newcommand{\N}{ \mathbb{N} }
\newcommand{\HH}{ \mathbb{H} }

\newcommand{\A}{ \mathbb{A} }
\newcommand{\Af}{ {\mathbb{A}^{(\infty)}} }
\newcommand{\Afp}{ {\mathbb{A}^{(\infty, p)}} }

\newcommand{\PP}{ \mathbb{P} }
\newcommand{\SSS}{ \mathbb{S} }
\newcommand{\OOO}{\text{\footnotesize$\mathcal{O}$}}

\newcommand{\where}{\enspace | \enspace }

\newcommand{\comment}[1]{}

\newcommand{\tensor}{\otimes}

\DeclareMathOperator{\supp}{supp}
\DeclareMathOperator{\dd}{d}

\DeclareMathOperator{\Lie}{Lie}

\DeclareMathOperator{\im}{im}
\DeclareMathOperator{\ord}{ord}

\DeclareMathOperator{\Aut}{Aut}
\DeclareMathOperator{\Iso}{Iso}
\DeclareMathOperator{\Model}{M}

\DeclareMathOperator{\Cycle}{Z}

\DeclareMathOperator{\re}{Re}
\DeclareMathOperator{\Hom}{Hom}

\DeclareMathOperator{\Sym}{Sym}
\DeclareMathOperator{\End}{End}
\DeclareMathOperator{\Stab}{Stab}

\DeclareMathOperator{\Isome}{I}

\DeclareMathOperator{\ggT}{ggT}

\DeclareMathOperator{\SO}{SO}

\DeclareMathOperator{\hght}{ht}
\DeclareMathOperator{\vol}{vol}

\DeclareMathOperator{\pr}{pr}

\DeclareMathOperator{\tr}{tr}

\DeclareMathOperator{\Sp}{Sp}
\DeclareMathOperator{\SL}{SL}
\DeclareMathOperator{\GL}{GL}
\DeclareMathOperator{\PGL}{PGL}
\DeclareMathOperator{\Mp}{Mp}
\DeclareMathOperator{\adeg}{\widehat{deg}}

\DeclareMathOperator{\chern}{c}
\DeclareMathOperator{\achern}{\widehat{c}}

\DeclareMathOperator{\Div}{div}

\newcommand{\nP}{P}

\newcommand{\nX}{\mathbb{D}}
\newcommand{\nh}{h}

\newcommand{\nZ}{\Cycle}
\newcommand{\nSh}{\Model}
\newcommand{\nShD}{\Model^\vee}

\newcommand{\nweil}{\omega}

\newcommand{\WeilGamma}{\Upsilon}

\newcommand{\nRPCD}{\Delta}

\newcommand{\nL}{L}
\newcommand{\nM}{M}
\newcommand{\nN}{N}

\newcommand{\nO}{\mathbf{O}}

\newcommand{\nIsome}{\Isome}
\newcommand{\nInd}{\Isome}





\journal{Journal of Number Theory} 

\begin{document}

\begin{frontmatter}



\title{On recursive properties of certain $p$-adic Whittaker functions}


\author{Fritz H\"ormann}

\address{The Department of Mathematics and Statistics, McGill University, Montr\'eal, Canada\\
\emph{E-mail: hoermann@math.mcgill.ca}}

\begin{abstract}
We investigate recursive properties of certain $p$-adic Whittaker functions 
(of which representation densities of quadratic forms are special values).
The proven relations can be used to compute them explicitly in arbitrary dimensions, provided that enough information
about the orbits under the orthogonal group acting on the representations is available. These relations have implications for the 
first and second special derivatives of the 
Euler product over all $p$ of these Whittaker functions. These Euler products appear
as the main part of the Fourier coefficients of Eisenstein series associated with the Weil representation.
In case of signature $(m-2,2)$, we interpret these implications in terms of the theory of Borcherds' products on orthogonal Shimura varieties.
This gives some evidence for Kudla's conjectures in higher dimensions.
\end{abstract}

\begin{keyword}
Quadratic forms \sep Representation densities \sep Kudla's conjectures

\MSC 14G40 \sep 14G35 \sep 11E45 \sep 11E12

\end{keyword}

\end{frontmatter}



\section{Introduction}
\label{INTRO}

Let $\nL$ and $\nM$ be $\Z$-lattices of dimension $m$ and $n$ equipped with non-degenerate quadratic forms. 
The purpose of this article is the investigation of the associated representation densities, defined for each prime $p$ 
(in the simplest case) as the volume of
\[ \nIsome(\nM, \nL)(\Zp) = \{ \alpha: \nM_\Zp \rightarrow \nL_\Zp \where \alpha \text{ is an isometry} \}, \] 
w.r.t. some canonical volume form.
They are determined by the number of elements in 
$\nIsome(\nM, \nL)(\Z/p^k\Z)$
for sufficiently large $k$ and are of considerable interest because 
\begin{enumerate}
\item in the positive definite case, certain averages of the representation numbers $\# \nIsome(\nM, \nL^{(i)})(\Z)$ over all classes $\nL^{(i)}$
in the genus of $\nL$ are (up to a factor at $\infty$) equal to the Euler product over all representation densities by Siegel's formula. 
\item in the indefinite case, the same kind of product gives the relative volume of certain {\em special cycles} on the locally symmetric orbifold associated with the
lattice $\nL$. 
\item the product may be understood as a (special value of a) Fourier coefficient of an Eisenstein series associated with the Weil representation \cite{Weil1}.
This is related to 1., resp. 2., by the Siegel-Weil formula \cite{Weil2}. 
\end{enumerate}
The construction of Eisenstein series defines a natural `interpolation' of the individual factors in the Euler product as
a function in $s \in \C$. These functions are $p$-adic Whittaker functions in the non-Archimedean case and confluent hypergeometric functions in the Archimedean case, respectively.
All special values of the $p$-adic Whittaker functions at integral $s$ are volumes of sets like $\nIsome(\nM, \nL \oplus H^s)(\Zp)$, where $H$ denotes an hyperbolic plane.
Furthermore they are polynomials in $p^{-s}$ and hence determined by these values.
 
If $\nL$ has signature $(m-2,2)$, the locally symmetric spaces associated with $\nL$, mentioned in 2. above,  are in fact {\em Shimura varieties}.
Kudla, motivated by the celebrated work of Gross and Zagier \cite{GZ, GKZ}, conjectured a general relation of a {\em special derivative} w.r.t. $s$ of
the Fourier coefficients of the same Eisenstein series to {\em heights} of the special cycles (see e.g. \cite{Kudla5, Kudla6}). 
Defining and working appropriately with these heights requires integral models of compactifications of the Shimura varieties in question. 
In the thesis of the author \cite{Thesis}, a theory of those was developed and some of these conjectures could be partially verified in arbitrary dimensions. 
This required in particular a study of the recursive properties of the occurring $p$-adic Whittaker functions. 
This article is dedicated to a detailed discussion of those. It is organized as follows:

In section \ref{SECTIONREPDENS}, we define the representation densities $\mu_p(\nL_\Zp, \nM_\Zp, \kappa)$ 
as the volume of  $\nIsome(\nM, \nL)(\Qp) \cap \kappa$, where $\kappa$ is a coset in $(\nL_\Zp^*/\nL_\Zp) \otimes \nM_\Zp^*$, with respect to a certain
canonical measure. 
They differ from the classical densities by a discriminant factor, but have much nicer recursive
properties. All of these follow formally from the compatibility of the canonical measures with composition of maps (Theorem \ref{LEMMAVOLUMEFIBRATION}).
We recover a classical recursive property due to Kitaoka (Corollaries \ref{KITAOKA}, \ref{KITAOKA2}), 
as well as an easy orbit equation (Corollary \ref{elementaryorbiteq}) of the shape
\[ \mu_p(\nL_\Zp, \nM_\Zp, \kappa) =  \sum_{\text{orbits } \SO'(\nL_\Zp) \alpha \text{ in }\nIsome(\nM, \nL)(\Qp)\cap \kappa} \frac{ \text{volume of $\SO'(\nL_\Zp)$}}{\text{volume of $\SO'(\alpha^\perp_\Zp)$}}. \] 
where $\SO'$ denotes the discriminant kernel.
In section \ref{EIS}, we recall the Weil representation, the definition of the associated Eisenstein series and the relation of $p$-adic Whittaker functions to representation densities.

For the remaining part, we assume $p \not= 2$.

In section \ref{CONTINUATIONLAMBDAMU} ff., the volume of the discriminant kernel of the orthogonal group is `interpolated' as a function $\lambda(\nL_\Zp; s)$ by means of adding hyperbolic planes, too, which turns out to be a quite simple polynomial in $p^{-s}$ (Theorem \ref{EXPLIZIT}). 
We show that Kitaoka's formula and the above orbit equation are true also for the $p$-adic Whittaker functions $\mu_p(\nL_\Zp, \nM_\Zp, \kappa; s)$ and the $\lambda_p(\nL_\Zp; s)$. 
For this, we show that $\SO'$-orbits remain (ultimately) stable while adding hyperbolic planes.  
This reproves in an elementary way that the interpolated representation densities are polynomials in $p^{-s}$, too, 
for sufficiently large $s \in \Z_{\ge 0}$, and allows in principle to calculate these polynomials for arbitrary dimensions, 
provided one has enough information about the orbits. This is illustrated for $n=1$ in section
\ref{SIMPLEILLUSTRATION}, where we recover a special case of Yang's explicit formula (Theorem \ref{YANG}).

Section \ref{SECTIONN1} is dedicated to the case $\dim(\nM)=1$. In this case the $p$-adic Whittaker function, as a polynomial in $p^{-s}$, may be computed by means
of counting {\em all} $\nIsome(\nM, \nL)(\Z/p^k\Z)$ up to some specified $k$. This yields a relation to zeta functions of the lattice, too --- see Lemma \ref{REPDENS}. Furthermore, there exists a nice explicit
formula due to Yang in this case (Theorem \ref{YANG}). A similar formula is actually proven for $p=2$ in Yang's paper \cite{Yang1}, too. 

The development of the orbit equation, however, was motivated by the following. Assume $\nL_\Z$ is a global lattice of signature $(m-2,2)$.
Consider the locally symmetric space associated with $\nL_\Z$, the orbifold
\[ [ \SO(\nL_\Q) \backslash \nX_\nO \times (\SO(\nL_\Af) / \SO'(\nL_\Zh)) ], \]
where $\nX_\nO$ is the associated (Hermitian) symmetric space and $\SO'(\nL_\Zh)$ is the discriminant kernel, a compact open subgroup of
$\SO(\nL_\Af)$. It is a Shimura variety in this case, having a canonical model $\nSh({}^{\SO'(\nL_\Zh)} \nO)$ over $\Q$ ($m \ge 3$). On it, we have the {\em special 
cycle} $\nZ(\nL_\Z,\nM_\Z,\kappa)$, defined analytically as (see \ref{SPECIALCYCLE} for details)
\begin{gather*}
 \sum_{ \SO'(\nL_\Zh)\alpha \subset \nIsome(\nM,\nL)(\Af) \cap \kappa } \left[ \SO(\alpha^\perp_\Q) \backslash \nX_{\nO(\alpha^\perp)} \times (\SO(\alpha^\perp_\Af) / \SO'(\alpha^\perp_\Zh)) \right], 
\end{gather*}
i.e. as a sum of sub-Shimura varieties associated with certain lattices $\alpha^\perp_\Z$ (\ref{DEFI}).
The volume of the Shimura variety $\nSh({}^{\SO'(\nL_\Zh)} \nO)(\C)$ (w.r.t. a specific automorphic volume form) is given roughly by the 
product over all $\nu$ of $\lambda^{-1}_\nu(\nL_\Zp; 0)$ (with an appropriate factors for $\nu=\infty$).
Written in the product over all $\nu$ in the form \ref{GLOBALORBITEQUATION}:
\[ \lambda^{-1}(\nL_\Z; s) \mu(\nL_\Z, \nM_\Z, \kappa; s) = \sum_{ \SO'(\nL_\Zh)\alpha \subset \nIsome(\nM, \nL)(\Af) \cap \kappa} \lambda^{-1}(\alpha^\perp_\Z; s), \]
the value at $s=0$ expresses just the decomposition of the special cycle into sub-Shimura varieties (additivity of volume).
It is therefore tempting to believe, that its derivative should express the equality of the {\em height} of 
the special cycles as the sum of heights of its constituents. This however can {\em not} be true in general because
already for $n=1$ the $\mu$ in this equation differs from the
(holomorphic part) of the Fourier coefficient of the Eisenstein series by a factor of $|2 d(\nM_\Z)|^{-\frac{s}{2}}$ which comes from the Archimedian place (cf. Theorem \ref{SATZREPDENSARCHIMEDIANINTERPOLATION}). 
After incorporating a $|2 d(\nM_\Z)|_p^{\frac{s}{2}}$ into $\mu_p$, and slightly modifying $\lambda_p$, it remains true roughly at those $p$, where there is only one orbit (\ref{GLOBALORBITEQUATION}).
In the simple case of signature $(1,2)$, Witt rank 1 (modular curve) the correction in more ramified cases
can be explained by the existence of another, more deep local equation of this shape, related to the theta correspondence. This will be investigated briefly in section \ref{MODULARCURVE}.
Nevertheless, the orbit equation is already in this form technically very useful in the remaining cases, if one takes its derivative up to rational multiples of $\log(p)$, roughly for all $p$ where there is more than 1 orbit.

The application to Kudla's program of this and an interpretation of the value and derivative of the orbit equation at $s=0$ (case $n=1$) 
in terms of Borcherds' theory \cite{Borcherds1} is 
illustrated in section \ref{KUDLASPROGRAM}. It was the main achievement in the thesis of the author \cite{Thesis} and will be published in detail in a forthcoming paper \cite{Paper2}.
As motivation, we recommend the reader to read this section first, consulting only 
necessary definitions and statements from the foregoing text.

\section{Notation and basic definitions} 
\label{NOTATION}

\begin{PAR}
Let $R$ be a p.i.d., $\nL$ be an $R$-lattice, i.e. a free $R$-module of finite rank. 
We use the following notation:
\[
\begin{array}{rlcll}
\Sym^2(\nL^*)   &=& \{\text{quadratic forms on $\nL$} \}    &=& ((\nL\otimes \nL)^s)^* \\
\Sym^2(\nL)^*   &=& \{\text{symm. bilinear forms on $\nL$} \}   &=& (\nL^* \otimes \nL^*)^s \\
\Sym^2(\nL)     &=& \{\text{quadratic forms on $\nL^*$} \}  &=& ((\nL^* \otimes \nL^*)^s)^* \\
\Sym^2(\nL^*)^* &=& \{\text{symm. bilinear forms on $\nL^*$} \} &=& (\nL \otimes \nL)^s,
\end{array}
\]
here $(\cdots)^s$ denotes symmetric elements, i.e. invariants under the automorphism switching factors.
Usually a non-degenerate quadratic form in $\Sym^2(\nL^*)$ will be fixed and denoted by $Q_\nL$. Its associated bilinear form
$v,w \mapsto Q_\nL(v+w)-Q_\nL(v)-Q_\nL(w)$ is denoted by $\langle\cdot,\cdot\rangle_Q$, and its associated morphism $\nL \rightarrow \nL^*$ by 
$\gamma_Q$.

We denote the discriminant of $Q_\nL$, i.e. the determinant of
$\gamma_Q$ w.r.t. some basis of $\nL$ by $d(\nL)$. It is determined up to $(R^*)^2$. We denote by $<\varepsilon_1, \cdots, \varepsilon_m>$ the
lattice $R^m$ with quadratic form $x \mapsto \sum_{i=1}^m \varepsilon_i x_i^2$ ($R$ is always understood from the context). 
It has discriminant $2^m \prod_{i=1}^{m} \varepsilon_i$.  
\end{PAR}

\begin{PAR}\label{NATURALCHARACTERS}
In this article, we work with the following natural (up to a choice of $i \in \C$) characters on $R=\cdots$:
\begin{itemize}
\item[$\R$:] $\chi_\infty(x) := e^{2\pi i x}$,
\item[$\Qp$:] $\chi_p(x) := e^{-2 \pi i [x] }$, where $[x] = \sum_{i<0} x_i p^{-i}$ is the principal part, \\
(it has level/conductor 1),
\item[$\A^S$:] $\chi = \prod_{\nu\not\in S} \chi_\nu$.
\end{itemize}
The corresponding self-dual additive Haar measures are the
Lebesque measure on $\R$,  the standard measures on $\Qp$,
giving $\Zp$ the volume 1, and their product, respectively.
\end{PAR}

\begin{PAR}\label{WEILMEASURES}
Let $R$ be one of the rings of \ref{NATURALCHARACTERS}. 
Let $X$ be an algebraic variety over $R$ and $\widetilde{\mu}$ an algebraic volume form on $X$. 
As is explained in \cite{Weil3} (cf. also \cite[\S 3.5]{PlRa}), this defines a well-defined measure $\mu$ on $X(R)$, 
which depends on the choice of $\chi$ (resp. the additive Haar measure). For the special
case of a lattice $\nL$ of dimension $r$, and $\widetilde{\mu} \in \Lambda^r \nL^*$, 
there is $\widetilde{\mu}^* \in \Lambda^r \nL$ satisfying $\widetilde{\mu}^*\widetilde{\mu} = 1$.
In this case, the measures $\mu$ and $\mu^*$ are dual to each other with 
respect to the bicharacter $v, v^* \mapsto \chi(v^*v)$, i.e. for
\begin{eqnarray*}
 F_\Psi(w^*) &=& \int_\nL \Psi(w) \chi(w^* w) \mu(w) \qquad \Psi \in S(\nL)  \\
 F_\Psi(w) &=& \int_{\nL^*} \Psi(w^*) \chi(w^* w) \mu^*(w^*) \qquad \Psi \in S(\nL^*)
\end{eqnarray*}
(where $S(\cdots)$ denotes space of Schwartz-Bruhat functions), we have $F_{F_\Psi}(w^*) = \Psi(-w^*)$. 
\end{PAR}

\begin{PAR}\label{DEFCANVOL}
Let $\nL$ be an $R$-lattice with non-degenerate quadratic form $Q_\nL$. 
Then there is a canonical (translation invariant) measure $\mu_\nL$ with $\mu_\nL^* = \mu_\nL$ under
the identification $\gamma_Q: \nL \stackrel{\sim}{\longrightarrow} \nL^*$.
Let $e_1, \dots, e_m$ be a basis of $\nL$, $e_1^*, \dots, e_m^*$ the dual basis and
$\widetilde{\mu} = e_1^* \wedge \cdots \wedge e_m^*$. Let $A$ be the matrix of $\langle , \rangle_Q$ in this basis. 
The measure $\mu_{\nL}$ is then given by
\[ \mu_{\nL} = |A|^{1/2}\mu, \]
where $|A|$ is the modulus of the determinant. We call it the {\bf canonical measure} on $\nL$ with respect to $Q_\nL$.

Let $\nM$ be another $R$-lattice, equipped with a non-degenerate quadratic form $Q_\nM$.

Choose a basis $f_1, \dots, f_n$ of $\nM$, too, and denote
$\widetilde{\mu} := \bigwedge_{i,j} e_i^* \tensor f_j^* \in \bigwedge^{nm} \nL^* \tensor \nM^*$. 
We call $\mu_{\nL, \nM} = |A|^{n/2} |B|^{m/2} \mu$
the {\bf canonical measure} on $\nL \tensor \nM$, where $A$ are $B$ the matrices of the associated bilinear forms, 
$m=\dim(\nL)$ and $n=\dim(\nM)$.

$\{e_i\otimes e_j\}_{i \le j}$ is a basis of $\Sym^2(\nL)$. We denote the corresponding dual basis by $\{(e_i\otimes e_j)^*\}_{i \le j}$.
Let $\widetilde{\mu} = \bigwedge_{i<j} (e_i \tensor e_j)^* \in \bigwedge^{\frac{m(m+1)}{2}} \Sym^2(\nL)^* $. In this case
we call $\mu_{\nL} = |A|^{\frac{m+1}{2}} \mu$ the {\bf canonical measure} on $\Sym^2(\nL)$.

Let $\widetilde{\mu} = \bigwedge_{i} (e_i \tensor e_i)^* \wedge \bigwedge_{i<j} (e_i \tensor e_j + e_j \tensor e_i)^*$.
In this case, we call $\mu_{\nL} = |A|^{\frac{m+1}{2}} \mu$
the {\bf canonical measure} on $(\nL \tensor \nL)^s$.

Similarly, we get a {\bf canonical measure} $\mu_{\nL} = |A|^{\frac{n-1}{2}} \mu$ on $\Lambda^2 \nL$.

According to these definitions, the measures
$\mu_\nL$ on $\Sym^2(\nL)$ and $\mu_\nL$ on $(\nL^* \tensor \nL^*)^s=\Sym^2(\nL)^*$ are dual. However, 
the symmetrization map $\Sym^2(\nL^*) \stackrel{\sim}{\longrightarrow} \Sym^2(\nL)^*$ sends
the canonical measure $\mu_\nL$ of the left hand side to the $|2|^{-m}$-multiple of $\mu_\nL$ 
on the right hand side.
\end{PAR}

\begin{PAR}\label{DEFI}
Let $R$ be a ring.
Let $\nL, \nM$ be $R$-lattices of rank $m$ and $n$ with quadratic forms $Q_\nL$ and $Q_\nM$.
Assume $Q_\nL$ non-degenerate. For each $R$-algebra $R'$, we define
\[ \nIsome(\nM, \nL)(R') := \{ \alpha: \nM_{R'} \rightarrow \nL_{R'} \where \alpha  \text{ is an isometry}\}. \]
$\nIsome(\nM, \nL)$ 
is an affine algebraic variety, defined over $R$. If $Q_\nM$ is degenerate, define in addition:
\[ \nIsome^{1}(\nM, \nL)(R') := \{ \alpha: \nM_{R'} \rightarrow \nL_{R'} \where \alpha \text{ is an injective isometry } \}. \]

If $R=\Qp$ or $\Af$, for any compact open subgroup $K \subset \SO(\nL_R)$ and 
compact open subset $\kappa \subset \nL_R \otimes \nM_R^*$ there are finitely many $K$-orbits in
$\nIsome(\nM, \nL)(R) \cap \kappa$. We will write frequently
\[ \sum_{ K \alpha \subseteq \nIsome(\nM, \nL)(R)\cap \kappa} \cdots \]
meaning, that we sum over all those orbits and $\alpha$ is a respective representative. 
In the case $R=\Af$, by $\alpha^\perp_\Z$ we understand a lattice which satisfies $\alpha^\perp_\Z \otimes \Zh \cong \im(\alpha)^\perp \cap \nL_\Zh$. It can
be realized as $(\alpha')^\perp \cap \nL'_\Z$ where $\nL'_\Z$ is in the genus of $\nL_\Z$ satisfying $g \nL'_\Zh = \nL_\Zh$ for a $g \in \SO(\nL_\Af)$ with
$g\alpha' = \alpha$ for a $\alpha' \in \nIsome(\nM, \nL)(\Q)$. Such $\alpha'$ and $g$ exist because of Hasse's principle and Witt's theorem, respectively. (In
our global cases always $\nIsome(\nM, \nL)(\R)\not= \emptyset$). However, only its genus is determined and all occurring formulas/objects will depend only on it.
\end{PAR}

\begin{DEF}\label{DEFGAMMA}
\begin{eqnarray*}
\Gamma_n(s) &:=& \pi^{\frac{n(n-1)}{4}}\prod_{k=0}^{n-1}\Gamma(s-\frac{k}{2}).
\end{eqnarray*}

\begin{eqnarray*}
\Gamma_{n,m}(s) &:=& 2^n \frac{\pi^{\frac{n}{2}(s+m)}}{\Gamma_n(\frac{1}{2}(s+m))} = \prod_{k=m-n+1}^m 2 \frac{\pi^{\frac{1}{2}(s + k)}}{\Gamma(\frac{1}{2}(s+k))}.
\end{eqnarray*}
\end{DEF}
$\Gamma_n$ is the higher dimensional gamma function defined e.g. in \cite{Shimura}.

\section{Representation densities}\label{SECTIONREPDENS}

\begin{PAR}
Let $\nL_\Qp, \nM_\Qp$ be  finite dimensional $\Qp$-vector spaces with non-degenerate quadratic forms 
$Q_\nL$ and $Q_\nM$, respectively and
let $\nL_\Zp \subset \nL_\Qp, \nM_\Zp \subset \nM_\Qp$ be $\Zp$-lattices.

The main object of study of this paper are the associated representation densities, i.e. the {\bf volumes}
of $\nIsome(\nM, \nL)(\Zp)$ or more generally the integral
\[ \mu(\nL, \nM, \varphi) := \int_{\nIsome(\nM, \nL)(\Qp)} \varphi(x^*) \mu_{\nL,\nM}(x^*) \]
of a function $\varphi \in S(\nM^*\otimes \nL)$ (locally constant with compact support)
with respect to a suitable measure $\mu_{\nL,\nM}$.
The canonical measures in \ref{DEFCANVOL} induce a {\em canonical} measure 
$\mu_{\nL, \nM}$ on $\nIsome(\nM, \nL)(\Qp)$, too. In this section, we will first describe this measure (also for the real case) and then relate the volumes, respectively
integrals to the classical definition of representation density.
\end{PAR}

\begin{PAR}\label{MASSSEQUENZ}
Let $R$ be a $\Q_p$ or $\R$. Let $\nL, \nM$ be vector spaces of dimension $m$, $n$. Assume $m \ge n \ge 0$. We identify $\nM^* \tensor \nL$ with $\Hom(\nM, \nL)$ in what follows.
There is a fibration
\begin{equation}\label{fibration}
\xymatrix{ \nIsome(\nM^Q,\nL) \ar@{^{(}->}[rr] & & \nM^* \tensor \nL \ar[rr]^{\alpha \mapsto \alpha^! Q_\nL} & & \Sym^2(\nM^*), } 
\end{equation}
where $\nIsome(\nM^Q, \nL)$ is the preimage of $Q=Q_\nM$ and $\alpha^! Q_\nL$ denotes 
pullback of $Q_\nL$ to $\nM$ via $\alpha$. As soon as we choose (translation invariant) measures $\mu_{1,2}$ on $\nM^* \otimes \nL$ and $\Sym^2(\nM^*)$ respectiviely, 
this defines a measure $\frac{\mu_1}{\mu_2}$
on the fibers, restricted to the submersive set $(\nM^*_R \otimes \nL_R)^{reg}$ of the map $\alpha \mapsto \alpha^! Q_\nL$. This set coincides with the locus of maps with
maximal rank $n$. This means in particular that the following integral formula holds true:
\begin{equation}\label{integralformula}
\int_{\Sym^2(\nM_R^*)} \mu_2(Q) \int_{\nIsome(\nM^Q, \nL)(R)} \varphi(\alpha) \frac{\mu_1}{\mu_2}(\alpha)  = \int_{\nM_R^* \tensor \nL_R} \varphi(\alpha) \mu_1(\alpha),
\end{equation}
where $\varphi$ is continuous with compact support on $(\nM^*_R \otimes \nL_R)^{reg}$.
\end{PAR}

\begin{LEMMA}[{Weil \cite[\S 34]{Weil2}}]\label{LEMMAWEILINTEGRABILITY}
For $m \ge 2n+1$ and $R$ local, $\varphi \in S(\nM^*_R \otimes \nL_R)$ (space of Schwartz-Bruhat functions) is integrable with respect to the
measure $\frac{\mu_1}{\mu_2}$, too.
\end{LEMMA}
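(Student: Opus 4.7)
The approach is to bootstrap from the integral formula (\ref{integralformula}), which is stated for $\varphi$ continuous with compact support in the regular locus $(\nM^*_R \otimes \nL_R)^{reg}$, to all of $S(\nM^*_R \otimes \nL_R)$, by an approximation/monotone-convergence argument, following Weil \cite[\S 34]{Weil2}. First I would reduce to $\varphi \ge 0$ by decomposing into real/imaginary and positive/negative parts. Next I would exhaust the regular locus by an increasing sequence $\varphi_k \in C_c((\nM^*_R \otimes \nL_R)^{reg})$ with $0 \le \varphi_k \nearrow \varphi$ pointwise on the regular locus, apply (\ref{integralformula}) to each $\varphi_k$, and pass to monotone limits on both sides. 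Since the non-regular locus $\{\rk\alpha < n\}$ is a proper algebraic subvariety of $\nM^*_R \otimes \nL_R$ (of codimension $m-n+1$), it has $\mu_1$-measure zero, so $\int \varphi_k \, d\mu_1 \to \int \varphi \, d\mu_1 < \infty$, the latter finite because $\varphi$ is Schwartz--Bruhat. Monotone convergence then shows the iterated right-hand side is finite, and Fubini--Tonelli implies that the inner fiber integral $F_\varphi(Q) := \int_{\nIsome(\nM^Q,\nL)(R)} \varphi \frac{\mu_1}{\mu_2}$ is finite for $\mu_2$-almost every $Q \in \Sym^2(\nM^*_R)$.

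To pass from a.e.\ finiteness to finiteness at the specific $Q = Q_\nM$, I would establish continuity of $F_\varphi$ on the open set of non-degenerate $Q$. This reduces to producing a smoothly varying family of diffeomorphisms $\nIsome(\nM^Q,\nL)(R) \cong \nIsome(\nM^{Q_0},\nL)(R)$ for $Q$ near a fixed non-degenerate $Q_0$, obtained by a Gram--Schmidt-type change of basis on $(\nM,Q) \cong (\nM,Q_0)$ varying smoothly in $Q$; under this identification $\frac{\mu_1}{\mu_2}$ changes by a smooth non-vanishing factor, and dominated convergence yields continuity.

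The role of the dimension hypothesis $m \ge 2n+1$ is to ensure the genuine \emph{absolute} integrability of the iterated integral, i.e.\ that the blow-up of $\frac{\mu_1}{\mu_2}$ near the boundary of the regular locus is controlled. This is the quantitative core of the lemma and the main obstacle. I would handle it as in Weil, by writing $F_\varphi$ as a Fourier transform of a convolution expression involving $\varphi$ and an orbital-integral kernel, whose decay along the directions approaching the non-regular locus is precisely ensured by the codimension estimate $m - n + 1 \ge n+2$; alternatively, in the non-archimedean case one can use an explicit Bruhat/Iwasawa parametrization of the $\SO(\nL_R)$-orbits in $\nM^*_R \otimes \nL_R$ to exhibit the needed majorant directly.
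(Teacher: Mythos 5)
The paper gives no proof of this lemma---it is cited verbatim from Weil~\cite[\S 34]{Weil2}---so there is no ``paper's own proof'' to compare against; your proposal is a reconstruction. Structurally it is plausible, but two things do not quite work as written.

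First, the passage from ``finite for a.e.\ $Q$'' to ``finite at $Q=Q_\nM$'' via continuity is circular as stated. You invoke dominated convergence to establish continuity of $F_\varphi$ near $Q_\nM$, but the dominating function must itself be $\frac{\mu_1}{\mu_2}$-integrable on the fiber over $Q_\nM$, which is exactly the conclusion sought. There \emph{is} a non-circular fix---take a Schwartz majorant $\Phi(\alpha)\ge \sup_{g\in K}|\varphi(\alpha g)\cdot\mathrm{Jac}(g)|$ for $K$ a compact neighborhood of $1\in\GL(\nM_R)$, use Tonelli to find \emph{some} $Q_0$ in the $\GL(\nM_R)$-orbit of $Q_\nM$ (which is open, hence of positive measure) with $F_\Phi(Q_0)<\infty$, and transport by the diffeomorphism $\alpha\mapsto\alpha g_0^{-1}$---but this is not what you wrote, and the distinction matters. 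Note also that continuity of an extended-real-valued function together with a.e.\ finiteness does \emph{not} imply finiteness everywhere ($x\mapsto 1/|x|$ is a counterexample), so the logic has to be the transport argument, not literally ``continuity plus a.e.\ finiteness.''

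Second, and more seriously, you isolate the role of $m\ge 2n+1$ as controlling a blow-up of $\frac{\mu_1}{\mu_2}$ near the non-regular locus and call this ``the quantitative core,'' but then leave it entirely to ``handle as in Weil.'' For the literal statement at a fixed \emph{non-degenerate} $Q_\nM$, there is in fact no such blow-up: an isometry onto a non-degenerate space has rank $n$, so the fiber $\nIsome(\nM^{Q_\nM},\nL)(R)$ sits entirely inside the regular locus, and (in the real case) is a closed real-algebraic orbit with polynomial volume growth, so Schwartz decay already wins without any dimension hypothesis; in the non-archimedean case $\varphi$ has compact support and the statement is immediate. The hypothesis $m\ge 2n+1$ is really there for the \emph{stronger} package Weil proves in \S 34 and which the paper uses next in Theorem~\ref{SATZWEILPROP6}: that $Q\mapsto\Psi(Q)$ extends continuously across degenerate $Q$ and lies in $L^1(\Sym^2(\nM_R^*))$, so that the Fourier-inversion identification with $\Psi'$ makes sense pointwise. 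Weil's actual route is roughly the opposite of yours: he starts from the Gaussian integral $\Psi'(\beta)$ (which converges for any Schwartz $\varphi$), proves its decay via Gauss-sum/stationary-phase estimates, uses $m\ge 2n+1$ to get $\Psi'\in L^1$, and then \emph{defines} $\Psi$ as the inverse transform and identifies it with the fiber integral. Your Tonelli-plus-transport approach is a genuinely different---and in fact more elementary---argument for the literal integrability claim at fixed $Q_\nM$, but as written it does not engage the dimension hypothesis in any essential way, and the step where you say it matters is left unproved.
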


In the case $m\ge 2n+1$, the integrals $\int_{\nIsome(\nM^Q, \nL)} \varphi(\alpha) \frac{\mu_1}{\mu_2}(\alpha)$ may be computed by Fourier analysis (cf. Theorem \ref{YANG}), 
via the following well-known theorem. It is also central for the connection between Eisenstein series (or
Whittaker functions) and volumes. It is analogous to the connection between Gauss sums and
representation numbers over finite fields.

\begin{SATZ}[{Weil \cite[Proposition 6]{Weil2}}] \label{SATZWEILPROP6}
Let $R$ be local and $m \ge 2n+1$. Let $\varphi \in S(\nM^*_R \otimes \nL_R)$. The function
\[ \Psi(Q) = \int_{\nIsome(\nM^Q, \nL)} \varphi(\alpha) \frac{\mu_1}{\mu_2}( \alpha ) \qquad Q \in \Sym^2(\nM^*_R) \]
has Fourier transform
\[ \Psi'(\beta) = \int_{\nM^* \otimes \nL} \varphi(x^*) \chi((x^*)^!Q_L \cdot \beta) \mu_1 (x^*) \qquad \beta \in (\nM_R\otimes \nM_R)^s \]
with respect to a measure $\mu_2$ on $\Sym^2(\nM^*_R)$. 
Here $\mu_1$, $\mu_2$ and $\frac{\mu_1}{\mu_2}$ are connected via the fibration~\ref{MASSSEQUENZ}.
\end{SATZ}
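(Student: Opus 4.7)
The plan is to follow Weil's strategy and compute the Fourier transform of $\Psi$ directly by substituting the definition and applying the integral formula \ref{integralformula} in reverse. By definition the Fourier transform is
\[ \Psi'(\beta) = \int_{\Sym^2(\nM_R^*)} \Psi(Q) \chi(Q\cdot\beta) \mu_2(Q), \]
so unfolding $\Psi$ gives a double integral over $Q$ and over $\alpha \in \nIsome(\nM^Q,\nL)$. On each fiber the identity $Q = \alpha^! Q_\nL$ holds tautologically, so $\chi(Q\cdot\beta)$ may be replaced by $\chi(\alpha^! Q_\nL \cdot \beta)$ inside the inner integral. At that point the integrand is a function purely of $\alpha$, namely $\widetilde\varphi(\alpha) := \varphi(\alpha)\chi(\alpha^!Q_\nL \cdot \beta)$, and the integral formula \ref{integralformula} directly rewrites the double integral as $\int_{\nM_R^*\tensor\nL_R} \widetilde\varphi(\alpha) \mu_1(\alpha)$, which is exactly the claimed $\Psi'(\beta)$.

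The main step to execute carefully is thus the application of \ref{integralformula}. That formula was stated for continuous $\varphi$ of compact support on the regular locus $(\nM_R^*\tensor \nL_R)^{reg}$, whereas here $\widetilde\varphi$ is a Schwartz--Bruhat function on all of $\nM_R^*\tensor\nL_R$. First I would note that the non-regular locus, consisting of maps of rank $<n$, is a proper closed algebraic subvariety and therefore of measure zero with respect to $\mu_1$, so extending $\widetilde\varphi$ by its restriction causes no problem on the right hand side. On the left hand side, for each fixed $Q$ in the image of the rank-$n$ locus (i.e.\ for $Q$ non-degenerate), the fiber $\nIsome(\nM^Q,\nL)$ already lies inside the regular locus, so the inner integrals are unaffected; the set of degenerate $Q$ is again a proper subvariety of $\Sym^2(\nM^*_R)$ and has $\mu_2$-measure zero.

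The real obstacle, and the one for which the hypothesis $m \ge 2n+1$ is needed, is the exchange of integrations, i.e.\ Fubini. For this one has to know that the Schwartz--Bruhat function $\widetilde\varphi$ is absolutely integrable for the combined measure, in the sense that $\int_Q \int_\alpha |\widetilde\varphi(\alpha)|\, \tfrac{\mu_1}{\mu_2}\, \mu_2$ is finite. Since $|\widetilde\varphi| = |\varphi|$, this is precisely the content of Lemma \ref{LEMMAWEILINTEGRABILITY}: under $m\ge 2n+1$, any $\varphi\in S(\nM^*_R\tensor\nL_R)$ is $\tfrac{\mu_1}{\mu_2}$-integrable on the fibers with the outer integral convergent, so Fubini applies and the interchange is legitimate.

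Once Fubini is justified the whole argument is essentially formal: unfold, replace $Q$ by $\alpha^!Q_\nL$ on the fiber, and collapse the double integral back using \ref{integralformula}. I would not expect any subtlety in the Archimedean versus non-Archimedean dichotomy here, since both cases are handled uniformly by the local hypothesis and by the fact that \ref{integralformula} and \ref{LEMMAWEILINTEGRABILITY} are stated for $R = \R$ or $\Qp$ in parallel. The hypothesis $m\ge 2n+1$ cannot be dispensed with: without it the fiber integrals defining $\Psi(Q)$ still make sense for non-degenerate $Q$, but the growth of $\Psi$ near the singular $Q$ destroys the absolute convergence that Fubini requires, and the Fourier-analytic identity fails.
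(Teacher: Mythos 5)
The paper does not give its own proof of this statement; it cites Weil's \cite[Proposition 6]{Weil2} directly, and Lemma~\ref{LEMMAWEILINTEGRABILITY} from the same source is exactly the absolute-convergence input your argument needs. Your proof — unfold the Fourier integral, substitute $Q = \alpha^!Q_\nL$ on each fibre, invoke $m \ge 2n+1$ via Lemma~\ref{LEMMAWEILINTEGRABILITY} to justify Fubini, and collapse via the disintegration formula~\ref{integralformula} — is precisely Weil's argument and is correct.
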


\begin{DEF}\label{DEFCANVOLISOME}
In particular, if $Q_M$ is non-degenerate, and for any $m, n$, the canonical measures on $\nM^*_R \tensor \nL_R$ and $\Sym^2(\nM_R^*)$, introduced in \ref{DEFCANVOL},
define a {\bf canonical measure} $\mu_{\nL, \nM}$ on the fibre $\nIsome(\nM,\nL)(R)$ (which is the fibre above $Q=Q_M$) by means of this fibration.
\end{DEF}

We especially get a canonical and also invariant measure on every $\SO(\nL_R)$, coming (up to a real factor) from
an algebraic volume form. On the other
hand, algebraic invariant volume forms on $\SO(\nL_R)$ are canonically identified with $\Lambda^{\frac{m(m-1)}{2}} \Lie(\SO(\nL_R))^*$.
Hence every invariant measure on $\SO(\nL_R)$ is given by a translational invariant measure on $\Lie(\SO(\nL_R))^*$.
We have the following
\begin{LEMMA}\label{LEMMAVOLSO}
The associated measure on $\Lie(\SO(\nL_R))$ is the canonical measure (cf. \ref{DEFCANVOL}) on $\Lambda^2 \nL_R$ under the
natural identification $\Lie(\SO(\nL_R))) \cong \Lambda^2 \nL_R$ given by contraction with the bilinear form associated with $Q_\nL$.
\end{LEMMA}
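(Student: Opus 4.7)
The plan is to linearize fibration~(\ref{fibration}) at the identity $\alpha=\id\in\nIsome(\nL,\nL)(R)$ and compare the induced translation-invariant quotient measure on $\Lie(\SO(\nL_R))$ with the canonical measure on $\Lambda^2\nL_R$ from~\ref{DEFCANVOL}. Since both measures are translation invariant on a vector space, it suffices to match them on a single top form.

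First, I compute the differential: for $\phi:\alpha\mapsto\alpha^!Q_\nL$,
\[ d\phi|_{\id}(X)(v,w)=\langle Xv,w\rangle_Q+\langle v,Xw\rangle_Q, \]
i.e.\ twice the symmetric part of the bilinear form $B_X(v,w)=\langle Xv,w\rangle_Q$ associated to $X$ by contraction with $\gamma_Q$. Hence $\ker d\phi|_{\id}=\mathfrak{so}(\nL_R)$ corresponds to antisymmetric tensors: writing $X=(\gamma_Q\otimes\id)(\omega)$ for $\omega\in\nL_R\otimes\nL_R$, the kernel condition is exactly $\omega\in\Lambda^2\nL_R$, which gives the natural identification of the statement. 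As complement I take the self-adjoint endomorphisms $\mathfrak{sym}(\nL_R)$, corresponding to $(\nL_R\otimes\nL_R)^s$ under the same map, on which $d\phi|_{\id}$ acts as $2$ times the natural symmetrization isomorphism $(\nL_R\otimes\nL_R)^s\cong\Sym^2(\nL_R)^*\cong\Sym^2(\nL_R^*)$.

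The fibration's quotient-measure formula then determines $\mu_{\mathfrak{so}}$ on $\mathfrak{so}(\nL_R)$ by $\widetilde{\mu}_{\End(\nL_R)}=\widetilde{\mu}_{\mathfrak{so}}\wedge(d\phi|_{\id}|_{\mathfrak{sym}})^*\widetilde{\mu}_{\Sym^2(\nL^*_R)}$. So the proof reduces to verifying this equality for the canonical top forms from~\ref{DEFCANVOL}, which amounts to tracking the powers of $|A|$ and $|2|$ through the identifications $\End(\nL_R)=\nL_R^*\otimes\nL_R$, $(\nL_R\otimes\nL_R)^s\oplus\Lambda^2\nL_R=\nL_R\otimes\nL_R$, the symmetrization $\Sym^2(\nL^*)\leftrightarrow\Sym^2(\nL)^*$ (with its $|2|^{-m}$ factor noted in~\ref{DEFCANVOL}), and the factor of $2$ in $d\phi|_{\id}|_{\mathfrak{sym}}$. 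This normalization bookkeeping is the main (but purely routine) obstacle; when carried out, all the $|2|$-factors cancel and the $|A|$-factors combine as $(m+1)/2+(m-1)/2=m$ (equaling the canonical measure on $\nL\otimes\nL$) to yield $|A|^{(m-1)/2}$ on $\Lambda^2\nL_R$, which is its canonical measure.
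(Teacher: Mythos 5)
The paper states Lemma~\ref{LEMMAVOLSO} without proof, so there is no house proof to compare against; I have therefore checked your argument directly, and it is correct. Linearizing the fibration~(\ref{fibration}) with $\nM=\nL$ at $\alpha=\id$ is exactly the right move: translation invariance reduces the claim to matching a single top form, and the identity $\tilde\mu_{\End(\nL_R)}=\tilde\mu_{\mathfrak{so}}\wedge(d\phi|_{\id}|_{\mathfrak{sym}})^*\tilde\mu_{\Sym^2(\nL_R^*)}$ is precisely what the fibration measure $\mu_1/\mu_2$ means infinitesimally. Your identification of $\ker d\phi|_{\id}$ with $\Lambda^2\nL_R$ via $\omega\mapsto(\gamma_Q\otimes\id)(\omega)$ is the map the lemma refers to, and your computation of $d\phi|_{\id}$ is right.

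One caveat: the deferred ``routine bookkeeping'' is really the whole content, and you should carry it out. For the record, it does close: working in an orthogonal basis with $A=\mathrm{diag}(a_1,\dots,a_m)$, the canonical measure on $\End(\nL_R)\cong\nL_R^*\otimes\nL_R$ is the bare Lebesgue measure on matrix entries (the $|A|^{m/2}\cdot|A|^{-m/2}$ factors cancel); expressing it against the basis $\{a_iE_{ii}\}_i\cup\{a_iE_{ji}\pm a_jE_{ij}\}_{i<j}$ of $\mathfrak{sym}\oplus\mathfrak{so}$ introduces a determinant $2^{m(m-1)/2}|A|^{m}$; and $d\phi|_{\id}|_{\mathfrak{sym}}$, which sends $a_iE_{ii}\mapsto a_i^2\phi_{ii}$ and $a_iE_{ji}+a_jE_{ij}\mapsto 2a_ia_j\phi_{ij}$ (in the coordinate basis $\phi_{ij}$ of quadratic forms dual to $\{e_i\otimes e_i,\,e_i\otimes e_j+e_j\otimes e_i\}$), has Jacobian $2^{m(m-1)/2}|A|^{m+1}$, so $(d\phi|_{\mathfrak{sym}})^*\tilde\mu_{\Sym^2(\nL^*)}=2^{m(m-1)/2}|A|^{m+1}\cdot|A|^{-(m+1)/2}\tilde\nu_{\mathfrak{sym}}$. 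The two factors $2^{m(m-1)/2}$ cancel, the $|A|$-exponents give $m-(m+1)/2=(m-1)/2$, and $\tilde\nu_{\mathfrak{so}}$ is exactly the dual-basis measure of $\{e_i\wedge e_j\}_{i<j}$ under the identification; so the quotient is $|A|^{(m-1)/2}$ times this, i.e.\ the canonical measure on $\Lambda^2\nL_R$. A small presentational point: what you call the ``natural symmetrization isomorphism $(\nL\otimes\nL)^s\cong\Sym^2(\nL)^*$'' is actually $\gamma_Q\otimes\gamma_Q$ (a duality iso carrying its own $|A|^{m+1}$ Jacobian), not a symmetrization; only the second arrow $\Sym^2(\nL)^*\cong\Sym^2(\nL^*)$ is the symmetrization iso with the $|2|^{-m}$ factor of~\ref{DEFCANVOL}. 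Making that distinction explicit is necessary to get the $|A|$-exponent right.
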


The relation to the classical definition of representation density ($p$-adic case) and 
classical sphere volumes (real case) is given by the following easy
\begin{LEMMA}\label{REPDENSCLASSICAL} \quad \\
\begin{enumerate}
 \item For $n=m=1$, $\nIsome(\nM, \nL)_R$ consists of two points, each of which has volume one.
 \item For $R=\R$ and positive definite spaces $\nM_\R, \nL_\R$, we get 
\[ \vol(\nIsome(\nM, \nL)(\R)) =  \prod_{k=m-n+1}^m  2 \frac{\pi^{k/2}}{\Gamma(k/2)}.  \]
 \item For $R=\Qp$, choose lattices $\nL_\Zp$ and $\nM_\Zp$ and bases $\{f_i\}$ of $\nM_\Zp$ and $\{e_i\}$ of $\nL_\Zp$, respectively.
 Then we have:
\begin{gather*}
 \int_{\nIsome(\nM, \nL)(\Qp)} \varphi(x^*) \mu_{\nL,\nM}(x^*) = D(\nM_\Zp, \nL_\Zp) \beta(\nL_\Zp, \nM_\Zp, \varphi)
\end{gather*}
where 
\[ D(\nM_\Zp, \nL_\Zp) := |d(\nL_\Zp)|_p^{n/2} |d(\nM_\Zp)|_p^{(n-m+1)/2} \]
and
\[ \beta(\nL_\Zp, \nM_\Zp, \varphi) := \lim_{l \rightarrow \infty} p^{l(n(n+1)/2-mn)} \sum_{\substack{ \{\delta_i\} \subset
p^{-r}\nL_\Zp / p^l \nL_\Zp \\ Q_\nL(\delta_i)\equiv Q_\nM(f_i) \mod p^l \\ \langle \delta_i, \delta_j \rangle_\nL \equiv \langle f_i, f_j \rangle_\nM \mod p^l
}} \varphi(\sum_i f_i^* \otimes
\delta_i) \]
and $r$ is any integer with $\supp(\varphi) \subseteq p^{-r}\nL_\Zp$.
\end{enumerate}
\end{LEMMA}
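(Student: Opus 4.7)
All three parts follow from unpacking the definition of the canonical measure on $\nIsome(\nM,\nL)$ via the fibration~\eqref{fibration}.

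For part~(1), I choose bases so $Q_\nL = ax^2$, $Q_\nM = by^2$. Then $\Hom(\nM,\nL) \cong R$ via $c \leftrightarrow (f \mapsto ce)$, and the fibration~\eqref{fibration} is the squaring map $c \mapsto ac^2 \in \Sym^2(\nM^*) \cong R$. By~\ref{DEFCANVOL}, $\mu_1 = |2a|^{1/2}|2b|^{-1/2}\,dc$ on $\Hom(\nM,\nL)$ and $\mu_2 = |2b|^{-1}\,dq$ on $\Sym^2(\nM^*)$. At either solution $c_0$ of $ac_0^2 = b$, the Jacobian is $|2ac_0| = |2|\,|ab|^{1/2}$, and the induced fibre measure $\mu_1/\mu_2$ therefore assigns weight~$1$ to each point.

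For part~(2), I induct on $n$, using Theorem~\ref{LEMMAVOLUMEFIBRATION} (compatibility of canonical measures under composition) applied to the restriction map
\[ \nIsome(\nM,\nL)(\R) \longrightarrow \nIsome(\langle f_1,\ldots,f_{n-1}\rangle,\nL)(\R). \]
Over a fixed $\alpha'$ the bilinear constraints $\langle v,\alpha'(f_i)\rangle = \langle f_n,f_i\rangle_{Q_\nM}$ ($i<n$) pin down the $\im(\alpha')$-component of $v$ uniquely, while the remaining quadratic constraint becomes $Q_\nL(v_\perp) = \text{const}$ inside the positive-definite $(m-n+1)$-dimensional space $\im(\alpha')^\perp$; that is, the fibre is a sphere of dimension $m-n$. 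A direct computation (the one-dimensional case of part~(1), extended to this situation, or equivalently a standard polar-coordinates evaluation) identifies its canonical fibre measure with the standard spherical area, whose total is $2\pi^{(m-n+1)/2}/\Gamma((m-n+1)/2)$. Multiplying through the induction yields the product $\prod_{k=m-n+1}^{m}2\pi^{k/2}/\Gamma(k/2)$.

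For part~(3), I apply the integral formula~\eqref{integralformula} with $\psi$ the characteristic function of the coset $U_l \subset \Sym^2(\nM_\Zp^*)$ in which the coordinates $Q'(f_i)$ and $\langle f_i,f_j\rangle_{Q'}$ agree with those of $Q_\nM$ modulo $p^l$. With respect to the basis $\{f_i^*\otimes f_j^*\}_{i\le j}$, this is a box of $\mu_2$-measure $|d(\nM_\Zp)|_p^{-(n+1)/2}\,p^{-l n(n+1)/2}$. For $l$ sufficiently large (exceeding the support exponent $r$ and a modulus of continuity of $\varphi$), the preimage $\pi^{-1}(U_l) \cap \supp\varphi$ decomposes into $(p^l\nL_\Zp)^n$-cosets indexed precisely by the tuples $\{\delta_i\}$ appearing in the definition of~$\beta$, each of $\mu_1$-volume $|d(\nL_\Zp)|_p^{n/2}|d(\nM_\Zp)|_p^{-m/2}\,p^{-lmn}$. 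Dividing the preimage volume by $\mu_2(U_l)$ and passing to the limit, the $|d(\nM_\Zp)|_p$-factors combine to $|d(\nM_\Zp)|_p^{(n-m+1)/2}$ and the $p^{-l}$-factors to $p^{l(n(n+1)/2-mn)}$, reproducing exactly $D(\nM_\Zp,\nL_\Zp)\,\beta(\nL_\Zp,\nM_\Zp,\varphi)$.

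The main obstacle is not conceptual but bookkeeping of discriminant factors introduced by~\ref{DEFCANVOL}: the three distinct exponents $n/2$, $m/2$, $(n+1)/2$, the inversion of $|d(\nM)|_p$ when passing to $\nM^*$, and the factors of $|2|$ hidden in the identification of bilinear-form matrices with Jacobians of quadratic forms. Once these are consistently tracked, each of the three identities drops out directly.
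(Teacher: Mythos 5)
The paper states this lemma without proof, describing it merely as ``easy,'' so there is no argument in the text for your proposal to be compared against. That said, your proof is correct and fills in exactly what the author presumably had in mind. Part~(1) correctly tracks the density factors $|2a|^{1/2}|2b|^{-1/2}$ and $|2b|^{-1}$ (note that the canonical measure on $\nM^*\otimes\nL$ uses the dual Gram matrix $B^{-1}$ on $\nM^*$, which is why the exponent on $|2b|$ is negative), and the Jacobian $|2ac_0|=|2|\,|ab|^{1/2}$ cancels exactly to give weight one at each of the two solutions. Part~(2) is the expected induction via Theorem~\ref{LEMMAVOLUMEFIBRATION} with $\alpha$ the inclusion $\langle f_1,\dots,f_{n-1}\rangle\hookrightarrow\nM$; the base case $n=1$ reduces, after writing $Q_\nL(v)=|v|^2$ in an orthonormal basis and passing to polar coordinates, to $\vol(S^{m-1})=2\pi^{m/2}/\Gamma(m/2)$ once the two Gram-matrix factors $|A|^{1/2}|B|^{-m/2}$ and $|B|^{-1}$ are taken into account (they exactly absorb the $\frac12 q^{(m-2)/2}$ arising from the Jacobian). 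Part~(3) is the standard thickening argument: $U_l$ has $\mu_2$-measure $|d(\nM_\Zp)|_p^{-(n+1)/2}p^{-l n(n+1)/2}$, the preimage intersected with $\supp\varphi$ decomposes for $l\gg0$ into cosets each of $\mu_1$-volume $|d(\nL_\Zp)|_p^{n/2}|d(\nM_\Zp)|_p^{-m/2}p^{-lmn}$, and dividing and combining exponents gives $|d(\nL_\Zp)|_p^{n/2}|d(\nM_\Zp)|_p^{(n-m+1)/2}p^{l(n(n+1)/2-mn)}$ times the indicated sum, as required. The only loose end is that you should remark that for $l$ large the inner integral in~\eqref{integralformula} is literally constant on $U_l$ (local constancy of the Schwartz--Bruhat data), so no genuine limiting argument is needed on the left-hand side; but this is cosmetic.
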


\section{Fourier coefficients of Eisenstein series}
\label{EIS}

We briefly recall the connection between representation densities and Eisenstein series associated with the Weil representation.

\begin{PAR}
Let $R$ be $\Q_p$, $\R$, or $\A$ and $\chi$ the corresponding standard character (cf. \ref{NATURALCHARACTERS}).
Let $\nL_\Q$ be a quadratic space of dimension $m$ and $\nM_\Q$ be any finite dimensional space of dimension $n$.
Let $\Sp'(\mathfrak{\nM}_R)$ be $\Sp(\mathfrak{\nM}_R)$ if $m$ is even and $\Mp(\mathfrak{\nM}_R)$ 
(metaplectic double cover) if $m$ is odd. Here $\mathfrak{\nM} = \nM \oplus \nM^*$ with its natural symplectic form 
\[ \langle \left(\begin{matrix}w_1 \\ w^*_1\end{matrix} \right), \left(\begin{matrix}w_2 \\ w^*_2\end{matrix} \right) \rangle  \mapsto w^*_1(w_2) - w^*_2(w_1). \]

We denote:
\begin{eqnarray}
 g_l(\alpha) &:=& \left(\begin{matrix}\alpha &0\\0&^t\alpha^{-1} \end{matrix}\right)  \\
 u(\beta) &:=& \left(\begin{matrix}1 & \beta \\ 0 & 1 \end{matrix}\right) \\
 d(\gamma) &:=& \left(\begin{matrix}0 & -^t \gamma^{-1} \\ \gamma & 0 \end{matrix}\right) 
\end{eqnarray}
for $\alpha \in \Aut(\nM)$, $\beta \in \Hom(\nM^*, \nM)$ and $\gamma \in \Iso(\nM, \nM^*)$ 
and denote the image of $g_l$ and $u$ in $\Sp(\mathfrak{\nM})$ by $G_l$ and $U$ respectively.

Recall \cite{Weil1, Weil2} the Weil representation $\nweil_{\nL,\nM}$ of $\Sp'(\mathfrak{\nM}_R) \times \SO(\nL_R)$ 
on $S(\nM_R^* \otimes \nL_R)$ (space of Schwartz-Bruhat functions). It actually comes from the restriction of
the Weil representation of $\Mp(\mathfrak{\nM}_R \otimes \nL_R)$ on $S(\nM_R^* \otimes \nL_R)$.
The above elements have lifts to $\Sp'$ which act via the formul\ae:
\begin{eqnarray*}
\nweil_{\nL,\nM}(g_l(\alpha)) \varphi: &x^* \mapsto& \frac{\widetilde{\WeilGamma}(\gamma_0\alpha)}{\widetilde{\WeilGamma}(\gamma_0)}|\alpha|^{\frac{m}{2}} \varphi(^t \alpha x^*) \\
\nweil_{\nL,\nM}(u(\beta)) \varphi: &x^* \mapsto& \chi((x^*)^!Q_L \cdot \beta) \varphi(x^*) \\
\nweil_{\nL,\nM}(d(\gamma)) \varphi: &x^* \mapsto& \widetilde{\WeilGamma}(\gamma) \int_{\nM \otimes \nL^*} \varphi({}^t\gamma x) \chi(-\langle x^*, x \rangle) \mu_{\nL, \gamma}(x) 
\end{eqnarray*}
Here $\widetilde{\WeilGamma}(\gamma)$ is a certain eighth root of unity. It is equal to $\WeilGamma(\gamma \otimes Q_L)$ if $n=1$, where $\WeilGamma$ is the Weil index, denoted by $\gamma$ in \cite{Weil1, Weil2}. Warning: If $m$ is odd, the lift of $g_l(\alpha)$ does not constitute a group homomorphism.
$\mu_{\nL, \gamma}$ is the canonical measure on $\nL^* \otimes \nM$ determined by $Q_\nL$ on $\nL$ and $\gamma$ on $\nM$. 

We have obviously by the characterization of the Weil representation
\[ \Psi'(\beta) =  \widetilde{\WeilGamma}(\gamma_0)^{-1} (\nweil_{\nL,\nM}(d(\gamma_0) u(\beta)) \varphi) (0), \]  
for the function $\Psi'$ from Theorem \ref{SATZWEILPROP6}. 
Here, we assume, that the measure $\mu_1$ of that theorem has been chosen to be the canonical measure on $\nL \otimes \nM^*$ induced by $\gamma_0$ on $\nM$ and $Q_\nL$ on $\nL$.

This is the starting point for the connection of representation densities to Eisenstein series, as we will now briefly recall.
\end{PAR}

\begin{PAR}\label{EISENSTEIN} Let $R$ be $\Q_p$, $\R$, or $\A$.
Choose a maximal compact subgroup $K_R'$ of $\Sp'(R)$.
We have the Iwasawa decomposition
\[ \Sp'(\mathfrak{\nM}_R) = P' K_R'. \]
where $P'$ is the preimage of $P = U G_l$. 

Let $\xi'$ denote a character of $P'(\A)$, which is trivial on $U(\A)$ and on $P(\Q)$, 
but nontrivial on the metaplectic kernel (if $m$ is odd). If $m$ is even, $\xi$ comes from a character $\xi$ of $\A^*/\Q^*$ lifted to 
$G_l$ via $g_l(\alpha) \mapsto \xi(\det(\alpha))$. 

Let $\nInd_R(s, \xi_R')$ be the (normalized) parabolically induced representation 
\[ \nInd_R(s, \xi_R') := \nInd_{P'}^{\Sp'(\mathfrak{\nM}, R)}(|\det|^{s} \xi'_R), \] 
which is the space of smooth $K_R$-finite functions $\Psi$, satisfying
\[ \Psi( p g) = \xi_R'(p) |\det(\alpha(p))|^{s+\frac{n+1}{2}} \Psi(g), \]
where $\alpha(p)$ is the $G_l$ component in the Iwasawa decomposition of the projection of $p$ to $\Sp$.

In particular, for any $R$ as above, the Weil representation defines the following $\Sp'(\mathfrak{\nM}_R)$-equivariant operator:
\begin{eqnarray*}
 \Phi: S(\nM^*_R \otimes \nL_R) &\rightarrow& \nInd_R(\textstyle\frac{m-n-1}{2}\displaystyle, \xi'_R) \\
 \varphi &\mapsto& \{ g \mapsto (\nweil_{\nL,\nM}(g)\varphi)(0) \}
\end{eqnarray*}
Here $\xi'$ is determined by the character (up to sign, if $m$ is odd) 
$\alpha \mapsto \widetilde{\WeilGamma}(\gamma_0 \alpha)/\widetilde{\WeilGamma}(\gamma_0)$ for some $\gamma_0$.
\end{PAR}

\begin{PAR}\label{DEFEISENSTEIN}
Let $\Psi(s_0) \in \nInd(s_0, \xi')$ be given (here we set $s_0:=\frac{m-n-1}{2}$). It can be extended uniquely to a `section'
parameterized by $s \in \C$, with the property that the restriction to $K'$ is independent of $s$.

To any such `section', there is an associated {\bf Eisenstein series}. This association is a 
$\Sp'(\A)$-equivariant map
\[ E: \nInd_\A(s, \xi') \rightarrow \mathcal{A}(\Sp(\mathfrak{\nM}_\Q) \backslash \Sp'(\mathfrak{\nM}_\A)) \]
\[ \Psi(s) \mapsto \sum_{\gamma \in P(\Q) \backslash \Sp(\mathfrak{\nM}_\Q)} \Psi(s)(\gamma g), \]
where $\mathcal{A}$ is the space of automorphic functions. This series converges absolutely if $\Re(s) > \frac{n+1}{2}$ and
posesses a meromorphic continuation in $s$ to all of $\C$. Note that $\Sp(\mathfrak{\nM}_\Q)$ lifts canonically to $\Sp'(\mathfrak{\nM}_\A)$.

The Eisenstein series decomposes as follows:
\[ E(\Psi, g;  s) = \sum_{{\nM^*}' \subset \nM^*} E_{{\nM^*}'}(\Psi, g; s), \]
with
\[ E_{{\nM^*}'}(\Psi, g; s) = \sum_{\beta \in ({\nM}'\otimes{\nM}')_\Q^s} \Psi(s)(d_{M'}(\gamma_0) u(\beta) g). \]
where $d_{M'}(\gamma_0)$ is embedded via $\Sp'(\mathfrak{\nM}') \hookrightarrow \Sp'(\mathfrak{\nM})$. This embedding, 
as well as the dual subspace $\nM' \subset \nM$, depend on the choice of a complement $(\nM^*)''$. One easily sees, however, 
that $E_{{\nM^*}'}(\Psi, g; s)$ does not depend on these choices, nor on the isomorphism $\gamma_0: \nM'_\Q \rightarrow (\nM^*_\Q)'$.

At $s=s_0$, with $m > 2n+2$ (this assures convergence), we get:
\[ E_{{\nM^*}'}(\Psi, g; s_0) = \sum_{\beta \in ({\nM}'\otimes{\nM}')_\Q^s} \left(\nweil_{\nL,\nM'}(d(\gamma_0) u(\beta)) \nweil_{\nL,\nM}(g) \varphi\right)(0). \]
Using Theorem \ref{SATZWEILPROP6} and Poisson summation this yields:
\[ E_{{\nM^*}'}(\Phi(\varphi), g; s_0) = \sum_{Q \in \Sym^2({\nM^*}')_\Q} \int_{\nIsome_\A({\nM'}^{Q}, \nL)} \left(\nweil_{\nL,\nM'}(g)\varphi\right)(x^*) \dd x^*. \]
Here we interprete $g \varphi$ by composition with the embedding ${\nM^*}'\otimes \nL \hookrightarrow \nM^* \otimes \nL $
as a function on $({\nM^*}'\otimes \nL)_\A$. 
$\dd x^*$ is the Tamagawa measure.

This is essentially the Fourier expansion of the Eisenstein series: A calculation shows that
\begin{eqnarray*}
E_Q(\Phi(\varphi), g; s_0) &=& \int_{(\nM\otimes \nM)^s \backslash (\nM\otimes \nM)_\A^s} \sum_{{\nM^*}' \subset \nM^*} 
E_{{\nM^*}'}(\Phi(\varphi),u(\beta)g; s_0) \chi(\beta Q) \dd \beta \\
&=& \sum_{{\nM^*}' \subset \nM^*} \int_{(\nM' \otimes \nM')_\A^s} \left(\nweil_{\nL,\nM'}(d(\gamma_0)  u(\beta)) \nweil_{\nL,\nM}(g) \varphi \right)(0) \chi(\beta Q) \dd \beta \\
\end{eqnarray*}
\end{PAR}

It is convenient to make the following
\begin{DEF}\label{DEFWHITTAKER}
The {\bf Whittaker function} is defined as
\begin{eqnarray*}
W_{\nu, Q, {\nM^*}'}(\Psi, g) &:=& \int_{(\nM'\otimes \nM')^s_{\Q_\nu}} \Psi(\nweil_{\nL,\nM'}(d(\gamma_0) u(\beta)) \nweil_{\nL,\nM}(g)) \chi(\beta Q) \mu_{\gamma_0} (\beta_\nu), \\
\end{eqnarray*}
where we now chose the canonical measure $\mu_{\gamma_0}$ with respect to some fixed positive definite $\gamma_0$ on $\nM_\Q$ and hence $\nM_\Q'$ for convenience.
\end{DEF}

Hence we have
\begin{eqnarray*}
E_{Q}(\Psi, g; s) &=& \sum_{\{{\nM^*}' \subset \nM^* \where Q \in \Sym^2({\nM^*}') \} } \prod_\nu W_{\nu, Q, {\nM^*}'}(\Psi_\nu(s), g_\nu)
\end{eqnarray*}
and again by Theorem \ref{SATZWEILPROP6}:
\begin{equation*}\label{WHITTAKERWEIL}
 W_{\nu, Q, {\nM^*}'}(\Phi(\varphi_\nu)(s_0), g) = \widetilde{\WeilGamma}_\nu(\gamma_0) \int_{\nIsome_{\Q_\nu}({\nM'}^{Q}, \nL)} (\nweil_{\nL, \nM'}(g)\varphi_\nu) (x_\nu^*) \mu_{\nL,\gamma_0} (x_\nu^*),  
\end{equation*}
where $\mu_{\nL,\gamma_0}$ is the measure on $\nIsome_{\Q_\nu}({\nM'}^{Q}, \nL)$ induced by the canonical ones on $\Sym^2(\nM^*)$ and
$\nM^*\otimes \nL$ with respect to $\gamma_0$ and $Q_\nL$ via \ref{MASSSEQUENZ}.

More generally, the Siegel-Weil formula equals the whole Eisenstein series associated with $\Phi(\varphi)$ to an integral of a theta function
associated with $\varphi$ (see e.g. \cite{KuRa1, KuRa2, Weil2}).

Assume, that $\gamma_0$ and a lattice $\nM_\Zp$ are chosen such that $\gamma_0$ induces an isomorphism $\nM_\Zp \rightarrow \nM^*_\Zp$.
We will now investigate the Whittaker integral {\em as a function of $s$} in the case $R=\Qp$ to some extent.

\begin{SATZ}\label{REPDENSNONARCHIMEDIANINTERPOLATION}
Let $Q \in \Sym^2(\nM^*)$ be non-degenerate (with associated bilinear form $\gamma$) and $r \in \Z_{\ge 0}$. Denote $s=s_0+r$. Let $\varphi \in S(\nL_\Qp \otimes \nM_\Qp^*)$.
For sufficiently large $r$, we have the equality:
\begin{eqnarray*}
W_{Q,p}(\Phi(\varphi)(s), g) &=& \widetilde{\WeilGamma}_p(\gamma_0) \int_{\nIsome_{\Q_p}({\nM}^{Q}, \nL \perp H^r )} (\nweil_{\nL \perp H^r,\nM}(g)\varphi^{(r)})(x^*) \mu_{\gamma_0,\nL \perp H^r}(x^*), \\
&=& \widetilde{\WeilGamma}_p(\gamma_0) |\gamma|_p^{s} \mu_p(\nL, \nM^Q_\Zp, \nweil_{\nL,\nM}(g) \varphi; s-s_0)
\end{eqnarray*}
where $\varphi^{(r)}$ is $\varphi$ tensored with the characteristic function
$\chi^{(r)}$ of $H^r_{\Z_p} \otimes \nM_\Zp^*$ (depends on the choice of $\nM_\Zp$) and 
\[ \mu_p(\nL, \nM_\Zp, \varphi; r) = \mu_p(\nL \perp H^r, \nM, \varphi^{(r)}) \]
for $r \in \Z_{\ge 0}$. (Note, that the continuation of $\mu_p$ depends on the choice of the lattice $\nM_\Zp$!)

Furthermore the left hand side is a polynomial in $p^{-s}$ and therefore determined by the above values. Here $|\gamma|$ is computed with respect to the measure 
$\mu_{\gamma_0}$ on $\nM$.
\end{SATZ}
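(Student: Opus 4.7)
The strategy is to reduce the case $s = s_0 + r$ for the pair $(\nL, \nM)$ to the base case at $s_0^{\text{new}} = s_0 + r$ for the enlarged pair $(\nL \perp H^r, \nM)$, where Weil's formula \ref{SATZWEILPROP6} already applies, and then perform a change of measure to relate the resulting integral to $\mu_p(\nL, \nM^Q_\Zp, \cdot; s-s_0)$.

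\textbf{Step 1: Identification of sections.} The heart of the proof is the identity of induced sections
\[ \Phi(\varphi)(s_0 + r) = \Phi(\varphi^{(r)})(s_0 + r) \qquad \text{in } \nInd_p(s_0+r, \xi'), \]
where the left-hand side is the canonical extension (constant on $K'$) of $\Phi(\varphi)(s_0)$, and the right-hand side is the Weil-representation section naturally attached to the lattice $\nL \perp H^r$, which is at \emph{its} base point $\frac{(m+2r)-n-1}{2} = s_0 + r$. Since both sections are determined by their restriction to $K'$, it suffices to check agreement at every $k \in K'$. Using the decomposition $\varphi^{(r)} = \varphi \otimes \chi^{(r)}$ and the multiplicativity of the Weil representation over orthogonal sums, this reduces to showing $(\nweil_{H^r, \nM}(k) \chi^{(r)})(0) = 1$ for $k \in K'$. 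This is trivial for the generators $g_l(\alpha)$ and $u(\beta)$ of $P' \cap K'$, and for the Weyl-type element $d(\gamma)$ it follows from the self-duality of the standard lattice in $H^r$ under the Fourier transform \ref{WEILMEASURES}, together with the fact that $\widetilde{\WeilGamma}_p$ for a split form is trivial.

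\textbf{Step 2: Applying Weil's formula on the enlarged space.} For $r$ large enough that $m + 2r \ge 2n+1$, Lemma \ref{LEMMAWEILINTEGRABILITY} guarantees integrability of $\varphi^{(r)}$ on the fibres $\nIsome(\nM^Q, \nL \perp H^r)$. Applying Theorem \ref{SATZWEILPROP6} to $\varphi^{(r)}$ (as done for $\varphi$ in the WHITTAKERWEIL display of \ref{EISENSTEIN}) yields
\[ W_{Q,p}(\Phi(\varphi^{(r)})(s_0+r), g) = \widetilde{\WeilGamma}_p(\gamma_0) \int_{\nIsome_{\Q_p}(\nM^Q, \nL \perp H^r)} (\nweil_{\nL \perp H^r, \nM}(g) \varphi^{(r)})(x^*) \mu_{\gamma_0, \nL \perp H^r}(x^*). \]
Combined with Step 1, this is the first claimed equality.

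\textbf{Step 3: Change of measure.} The measure $\mu_{\gamma_0, \nL \perp H^r}$ on the fibre is built from the $\gamma_0$-normalized canonical measures on $\nM^* \otimes (\nL \perp H^r)$ and on $\Sym^2(\nM^*)$, whereas the measure $\mu_p(\nL \perp H^r, \nM, \cdot) = \mu_p(\nL, \nM^Q_\Zp, \cdot; s-s_0)$ uses the canonical measures with respect to $Q = Q_\nM$ itself. A direct computation from the formulas of \ref{DEFCANVOL} shows that the ratio on the fibre is $(|\gamma|_p/|\gamma_0|_p)^{(m+2r-n-1)/2} = (|\gamma|_p/|\gamma_0|_p)^{s}$; with the normalization $|\gamma_0|_p = 1$ implicit in the statement (so that $|\gamma|_p$ is computed with respect to $\mu_{\gamma_0}$), this is the desired factor $|\gamma|_p^s$, giving the second equality.

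\textbf{Step 4: Polynomiality.} That $W_{Q,p}(\Phi(\varphi)(s), g)$ is a polynomial in $p^{-s}$ is essentially built into the construction of the standard section from a Schwartz-Bruhat function, so that once the above matches the interpolated $\mu_p(\cdot; s-s_0)$ — which is polynomial in $p^{-(s-s_0)}$ by the explicit recursive computations of sections \ref{CONTINUATIONLAMBDAMU} ff.\ (cf.\ Theorem \ref{EXPLIZIT}) — the equality itself is the statement of polynomiality. The key observation is that the values at $s = s_0 + r$ for $r \in \Z_{\geq 0}$ determine the polynomial uniquely.

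\textbf{Main obstacle.} The delicate step is the section identification in Step 1: one must keep track of the metaplectic sign ambiguities in the odd-$m$ case and ensure that the character $\xi'$, the factor $\widetilde{\WeilGamma}$, and the measure normalization on $H^r$ are all set up so that the extra factors from enlarging $\nL$ to $\nL \perp H^r$ cancel. All remaining steps are essentially bookkeeping with canonical measures.
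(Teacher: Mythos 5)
Your proposal is correct and follows essentially the same route as the paper's proof: identifying the tensor factorization of the Weil representation over $\nL \perp H^r$, showing that the standard section $\Phi(\varphi)(s_0+r)$ agrees with $\Phi(\varphi^{(r)})$ at the new base point (the paper packages this as the single Iwasawa-decomposition computation $(\nweil_{H^r,\nM}(u(\beta) g_l(\alpha) k ) \chi^{(r)} )(0) = |\alpha|^r$, while you check fixedness of $\chi^{(r)}$ on generators of $K'$ — equivalent, but the Iwasawa formulation is tighter), and then invoking Weil's Proposition~6. Your Step~3 usefully spells out the change of reference form from $\gamma_0$ to $\gamma$ that produces the $|\gamma|_p^s$ factor, which the paper leaves implicit in its choice of $\mu_{\nL,\gamma_0}$; on the other hand your Step~4 on polynomiality is rather hand-wavy where the paper cites Rallis and the later direct argument (\ref{MUPOLYNOMIAL}), so you might sharpen that.
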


\begin{proof}
The Weil representation on 
$S(\nL^{(r)}\otimes \nM^*)$ is the tensor product of the respective Weil representation on 
$S(\nL\otimes \nM^*)$ and $S(H^{(r)}\otimes \nM^*)$.
We have
\[ (\nweil_{H^r,\nM}(u(\beta) g_l(\alpha) k ) \chi^{(r)} )(0) = |\alpha|^r,  \]
where $k \in K$ and $K$ is the maximal compact open subgroup associated with the lattice $\mathfrak{\nM}_\Zp$. Hence multiplying 
$\Phi(\varphi)(s)$ by $\Phi(\chi^{(r)})$ or substituting $s$ by $s+r$ has the same effect.

The assertion that the left hand side is a polynomial in $p^{-s}$ follows from the arguments given in \cite[p. 101]{Rallis}. We will later (cf. \ref{MUPOLYNOMIAL}) 
prove directly that $\mu_p(\dots; s)$ is a polynomial in $p^{-s}$ for sufficiently large $s$.
\end{proof}

\begin{PAR}\label{REPDENSARCHIMEDIANINTERPOLATION}

We now investigate the Whittaker integral {\em as a function of $s$} in the case $R=\R$ and for the function 
$\Psi_\infty:=\Psi_{\infty,\frac{m}{2}}$, whose restriction to $K'$ (maximal compact, see below) is $\det^{\frac{m}{2}}$.
We again consider only the case that $Q$ is non-degenerate on $\nM$.
The maximal compact subgroup $K$ of $\Sp(\mathfrak{\nM}_\R)$ is defined as follows:

Let $\gamma_0$ be a symmetric and positive definite form on $\nM$.
It defines an isomorphism:
\begin{eqnarray*}
 \nM_\C^* & \rightarrow & \mathfrak{\nM}_\R \\
 w_1 + i w_2 & \mapsto & w_1 - {^t}\gamma_0^{-1} w_2 = w_1 - \gamma_0^{-1} w_2
\end{eqnarray*}
and a corresponding map
\begin{eqnarray*}
 k: \End(\nM_\C^*) &\rightarrow& \End(\mathfrak{\nM}_\R) \\
 \alpha_1 + i \alpha_2 &\mapsto& \left( \begin{matrix}\gamma_0^{-1} \alpha_1 \gamma_0 & - \gamma_0^{-1} \alpha_2 \\ \alpha_2 \gamma_0 &
\alpha_1  \end{matrix}\right).
\end{eqnarray*}
This identifies the unitary group of the Hermitian form given by $\gamma_0$ on $\nM_\C$ with the
stabilizer of $d(\gamma_0)$ in $\Sp(\mathfrak{\nM}_\R)$, which is a maximal compact subgroup, denoted $K$. Everything depends on the choice of $\gamma_0$.
Wew define $K'$  to be the preimage of $K$ in $\Sp'(\mathfrak{\nM}_\R)$.
\end{PAR}

\begin{SATZ}\label{SATZREPDENSARCHIMEDIANINTERPOLATION}
For $\nM=\nM^*=\Q$ with $\gamma_0=1$, we get
\begin{eqnarray*}
 \lim_{\alpha\rightarrow \infty} |\alpha|^{-\frac{m}{2}} e^{2 \pi \alpha^2 Q} W_{Q, \infty}(\Psi_\infty(s), g_l(\alpha)) &=&
 \widetilde{\WeilGamma}_\infty(\gamma_0) \Gamma_{1,m}(s-s_0) |\gamma|^{s} |2 \gamma|^{\frac{1}{2}(s-s_0)}.
\end{eqnarray*}
(Here: $\gamma = 2 Q$).

For arbitrary $n$, with $m > 2n$ and $s=s_0$ (holomorphic special value), we get
\begin{eqnarray*}
|\alpha|^{-\frac{m}{2}} e^{2\pi \alpha^! Q \cdot \gamma_0^{-1}} W_{Q, \infty}(\Psi_\infty(s_0), g_l(\alpha)) &=& \begin{cases}
\widetilde{\WeilGamma}_\infty(\gamma_0) \Gamma_{n,m}(0) |\gamma|^{s_0}
& Q>0 , \\
0 & \text{otherwise,} \end{cases}
\end{eqnarray*}
where $|\gamma|$ is computed via the canonical measure $\mu_{\gamma_0}$.
\end{SATZ}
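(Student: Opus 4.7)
My plan is to treat the two formulas separately: the holomorphic special value at $s=s_0$ is amenable to the orbital-integral formalism of equation \eqref{WHITTAKERWEIL}, whereas for general $s$ the section $\Psi_\infty(s)$ is defined only by analytic continuation and is no longer of the form $\Phi(\varphi)$, so I would compute it directly via the Iwasawa decomposition. Throughout, $K'$-equivariance reduces everything to $g = g_l(\alpha)$.

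For Part 2, I identify $\Phi^{-1}(\Psi_\infty(s_0))$ as the Gaussian $\varphi_0(x^*) = e^{-2\pi (x^*)^!Q_L\cdot \gamma_0^{-1}}$, which is (up to scalar) the unique Schwartz function on which $K$ acts via $\det^{m/2}$. Inserting into \eqref{WHITTAKERWEIL} and applying the explicit $g_l(\alpha)$-action from \ref{EISENSTEIN} gives $(\nweil(g_l(\alpha))\varphi_0)(x^*) = \mathrm{const}\cdot |\alpha|^{m/2} e^{-2\pi\,\alpha^!Q\cdot \gamma_0^{-1}}$ on $\nIsome(\nM^Q,\nL)$, since the identity $x^{*!}Q_L=Q$ makes the exponent constant on the fiber. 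This exponential cancels the $e^{2\pi\alpha^!Q\cdot\gamma_0^{-1}}$ of the left-hand side, and what remains is $\vol_{\mu_{\nL,\gamma_0}}\nIsome(\nM^Q,\nL)(\R)$. For $Q$ not positive definite this vanishes (since $\nL$ is positive definite); for $Q>0$, Lemma \ref{REPDENSCLASSICAL}(2) yields $\Gamma_{n,m}(0)$ when the measure is built from $\gamma=2Q$ on $\nM$, and comparing with the measure built from $\gamma_0$ via the factors $|\cdot|^{m/2}$ on $\mu_1$ and $|\cdot|^{(n+1)/2}$ on $\mu_2$ in Definition \ref{DEFCANVOL} produces exactly $|\gamma|^{(m-n-1)/2}=|\gamma|^{s_0}$.

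For Part 1, with $n=1$, $\nM=\Q$ and $\gamma_0=1$, I compute the Iwasawa decomposition $d(1)u(\beta)g_l(\alpha) = p(a,b)k(\theta)$ in $\SL_2(\R)$ explicitly: $a = |\alpha|/\sqrt{\alpha^4+\beta^2}$ and $e^{i\theta} = (\beta+i\alpha^2)/\sqrt{\alpha^4+\beta^2}$. Hence
\[ \Psi_\infty(s)(d(1)u(\beta)g_l(\alpha)) = \mathrm{const}\cdot \alpha^{s+1}(\beta+i\alpha^2)^{m/2}(\alpha^4+\beta^2)^{-(s+1)/2 - m/4}. \]
The substitution $\beta = \alpha^2 u$ turns the Whittaker integral into
\[ \mathrm{const}\cdot \alpha^{-s+1}\int_\R (u+i)^{m/2}(1+u^2)^{-(s+1)/2 - m/4}e^{2\pi i\alpha^2 Q u}\,du. \]
For $Q>0$, I deform the contour upward and zoom in at $u=i$ via $u = i + v/\alpha^2$; the resulting $e^{-2\pi\alpha^2 Q}$ cancels the $e^{2\pi\alpha^2 Q}$ of the prefactor, and as $\alpha\to\infty$ all $\alpha$-dependent terms collapse, leaving a Gamma integral $\int_0^\infty v^{-(s+1)/2 - m/4}e^{-2\pi Q v}\,dv = \Gamma\bigl(1-(s+1)/2-m/4\bigr)(2\pi Q)^{(s+1)/2+m/4-1}$. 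Collecting the remaining powers of $2$, $i$, $Q$ and the Gamma factor identifies the limit with $\widetilde{\WeilGamma}_\infty(\gamma_0)\Gamma_{1,m}(s-s_0)|\gamma|^s|2\gamma|^{(s-s_0)/2}$.

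The main obstacle I anticipate is the bookkeeping of normalisation factors --- the Weil index $\widetilde{\WeilGamma}_\infty(\gamma_0)$, the self-dual normalisation of $\varphi_0$ relative to $\mu_{\nL,\gamma_0}$, and the distinction between the canonical measures on $\nIsome(\nM^Q,\nL)$ coming from $\gamma_0$ versus from $\gamma=2Q$. These are responsible for the factors $|\gamma|^{s_0}$ in Part 2 and $|\gamma|^s|2\gamma|^{(s-s_0)/2}$ in Part 1, and getting them right --- including the $|2|$'s arising when passing between $Q$ and its associated bilinear form, as flagged at the end of \ref{DEFCANVOL} --- is the delicate step. A useful sanity check is that specialising Part 1 to $s=s_0$ makes the $|2\gamma|$-factor $1$ and must reproduce the $n=1$ case of Part 2.
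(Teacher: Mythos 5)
Your proposal is correct in outline but takes a genuinely different route from the paper on both halves, so let me compare and flag one place where the argument needs repair.

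For the second formula (the holomorphic special value), your Gaussian argument --- identify $\Phi^{-1}(\Psi_\infty(s_0))$ with $\varphi_\infty(x^*)=e^{-2\pi (x^*)^!Q_\nL\cdot\gamma_0^{-1}}$, push through (\ref{WHITTAKERWEIL}), and read off the sphere volume from Lemma~\ref{REPDENSCLASSICAL}(2) --- is precisely the observation the paper records in the Remark following the theorem, and the paper then deliberately gives a \emph{different} proof via Shimura's formula \cite[p.~174, (1.23)]{Shimura}. One caveat your write-up should make explicit: the Gaussian is only a Schwartz function when $Q_\nL$ is positive definite, so the identification of $\Phi^{-1}(\Psi_\infty(s_0))$ presupposes $\nL_\R$ positive definite. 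This is harmless because $W_{Q,\infty}(\Psi_\infty(s),g)$ manifestly depends on $\nL$ only through $m$ (it is a $\GL_1\times K'$-equivariant integral of a section determined by the weight $m/2$), so one may replace $\nL_\R$ by an auxiliary positive definite space of the same dimension; but without saying this, the argument quietly proves less than the theorem asserts.

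For the first formula, your Iwasawa decomposition agrees exactly with the paper's first step; afterwards you diverge. The paper identifies the resulting integral with Shimura's confluent hypergeometric function $\xi(1,\gamma/2;a,b)$, passes to $\zeta(Z;a,b)=\Gamma(b)\,U(b,a+b,Z)$, and uses the asymptotics $U(b,a+b,Z)=Z^{-b}(1+O(|Z|^{-1}))$. Your approach --- substitute $\beta=\alpha^2 u$, deform the contour and zoom in at $u=i$ --- is a self-contained, from-scratch derivation of the same asymptotics. But there is a concrete gap in the last step: as you wrote it, the limiting integral $\int_0^\infty v^{-(s+1)/2-m/4}e^{-2\pi Q v}\,dv$ does not converge at $v=0$ for $s$ near $s_0=m/2-1$, and more importantly the target $\Gamma_{1,m}(s-s_0)$ contains $\Gamma\bigl(\tfrac12(s+1+\tfrac m2)\bigr)^{-1}$, a \emph{reciprocal} Gamma. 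The deformation around the branch point at $u=i$ actually produces a Hankel contour, which yields $1/\Gamma$ (or equivalently a $\sin(\pi b)\,\Gamma(1-b)$ whose product is $\pi/\Gamma(b)$ via the reflection formula). Until you replace the formal Gamma integral with the Hankel-contour identity and track the accompanying $e^{\pm i\pi\cdot}$ and $\sin$ factors --- which absorb the $\widetilde{\WeilGamma}_\infty(\gamma_0)$ eighth root of unity --- the claimed ``Gamma factor'' is on the wrong side of the fraction and the constants cannot close. This is fixable, and is exactly the sort of bookkeeping you flagged as delicate, but as written it is a step that fails rather than a step left to the reader.
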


For the definition of $\Gamma_{n,m}(s)$ see~\ref{DEFGAMMA}.

\begin{BEM}
The limit in the first equation will be related to integrals of Borcherds forms in \ref{KUDLA}. 

The second equation is expected because for a positive definite space $\nL_\R$, we have
the Gaussian $\varphi_\infty \in S(\nL_\R \otimes \nM^*_\R)$, defined by
\[ \varphi_\infty(\alpha) = \exp(-2 \pi \alpha^! Q_L \cdot \gamma_0^{-1}). \]
It satisfies $\Phi(\varphi_\infty) = \Psi_{\infty,\frac{m}{2}}$, therefore we get (cf. equation (\ref{WHITTAKERWEIL}))
\begin{eqnarray*}
W_{Q, \infty}(\Psi_\infty(s_0), 1) &=& \widetilde{\WeilGamma}_\infty(\gamma_0) \int_{\nIsome(\nM^Q, \nL)(\R)} \varphi_\infty(\alpha) \mu_{\nL,\gamma_0}(\alpha) \\
&=& \widetilde{\WeilGamma}_\infty(\gamma_0) \exp(-2\pi Q \cdot \gamma_0^{-1})  \Gamma_{n,m}(0) |\gamma|^{\frac{m-n-1}{2}},
\end{eqnarray*}
in accordance with the theorem. However, below we will give a different proof of the formula, based on Shimura's work \cite{Shimura}.
\end{BEM}

\begin{proof}[Proof of theorem \ref{SATZREPDENSARCHIMEDIANINTERPOLATION}.]
The Iwasawa decomposition of the argument of $\Psi_{\infty,l}$ in the Whittaker integral can be expressed as 
\[ d(\gamma_0)u(\beta) = u(\Delta^2 \beta) g_l(\Delta) k(\gamma_0 \Delta \beta + i \gamma_0 \Delta \gamma_0^{-1}),  \]
where $\Delta = (\sqrt{1 + (\beta\gamma_0)^2})^{-1}$. It satisfies ${}^t \Delta = \gamma_0 \Delta \gamma_0^{-1}$.

Hence we get:
\[
W_{Q,\infty}(\Psi_\infty(s), 1) = \int_{(\nM\otimes \nM)^s_{\R}} |\Delta|^{s+\frac{n+1}{2}} \chi_l(\gamma_0\Delta\beta + i {}^t\Delta)
\chi(Q \cdot \beta) \mu_{\gamma_0}(\beta) \\ 
\]
and after choosing an orthonormal basis for $\gamma_0$:
\[
W_{Q,\infty}(\Psi_\infty(s), 1) = \int_{(\R^n \otimes \R^n)^s} \det(X+i)^{-a} \det(X-i)^{-b} e^{-2\pi i \tr( \frac{1}{2} X \gamma ) }
\dd X,
\]
where $\gamma$ is the bilinear form associated with $Q$ (expressed in the chosen basis in the above formula).

with $a=\frac{1}{2}(s+\frac{n+1}{2}+l)$ and $b = \frac{1}{2}(s+\frac{n+1}{2}-l)$.
Here $\det(X+i)^{-a} = e^{-a (\frac{n}{2} \pi i +	  \log(\det(1-iX)))}$ and
$\det(X-i)^{-b} = e^{-b (-\frac{n}{2} \pi i + \log(\det(1+iX)))}$,
where $\log$ is the main branch of logarithm. $\dd X$ is the measure defined in \ref{DEFCANVOL} for the standard basis, without the 
determinant factor. It is the same as used in \cite{Shimura}.

Shimura [loc. cit.] denotes this function ($\xi(1, \frac{\gamma}{2}; a, b)$ in his notation)
in analogy to the one dimensional case a {\bf confluent hypergeometric function}.
Furthermore \cite[1.29, 3.1K, 3.3]{Shimura} if $Q$ is positive definite, the RHS equals
\[ e^{-\pi\tr \gamma + i\pi\frac{n}{2}(b-a)} \pi^{na+nb} 2^n \det(\gamma)^{a+b-\frac{n+1}{2}}
\Gamma_n(a)^{-1}\Gamma_n(b)^{-1} \zeta(2\pi\gamma; a, b),  \]
where
\[\zeta(Z; a, b) = \int_{X>0} e^{-\tr Z X } \det(X+1)^{a-\frac{n+1}{2}} \det(X)^{b-\frac{n+1}{2}} \dd X. \]

In the 1 dimensional case, this gives
\[ \zeta(Z; a, b) = \Gamma(b) U(b, a+b, Z), \]
where $U(k, l, Z)$ is a solution of the classical hypergeometric differential equation
\[ Z f''(Z) + (l-Z)f'(Z) - kf(Z) = 0, \]
see \cite[\S 13]{AbSt}. We have: $U(b, a+b, Z) = Z^{-b}(1+O(|Z|^{-1}))$ [loc. cit.].

Therefore the `value at $\infty$' for $l=\frac{m}{2}$ is computed as 

\begin{eqnarray*}
&& \lim_{\alpha\rightarrow \infty} |\alpha|^{-\frac{m}{2}} e^{2 \pi \alpha^2 Q } W_{Q, \infty}(\Psi_\infty(s), g_l(\alpha)) \\
&=& \lim_{\alpha\rightarrow \infty} |\alpha|^{-s+1-\frac{m}{2}} e^{ \pi \alpha^2 \gamma} W_{\alpha^! Q, \infty}(\Psi_\infty(s), 1) \\
&=& \lim_{\alpha\rightarrow \infty} |\alpha|^{-s+1-\frac{m}{2}}
e^{-i\pi\frac{m}{4}} \pi^{s+1} 2 |\alpha^2 \gamma|^{s} \Gamma_1(a)^{-1}\Gamma_1(b)^{-1} \zeta(2\pi\alpha^2\gamma, a, b) \\
&=& e^{-i\pi\frac{m}{4}} \pi^{s+1} 2 |\gamma|^{\frac{1}{2}(s-1+\frac{m}{2})} \Gamma_1(\frac{1}{2}(s+1+\frac{m}{2}))^{-1} (2\pi)^{-\frac{1}{2}(s+1-\frac{m}{2})} \\
&=& \widetilde{\WeilGamma}_\infty(\gamma_0) \Gamma_{1,m}(s-s_0) |\gamma|^{\frac{1}{2}(s+s_0)} 2^{-\frac{1}{2}(s-s_0) },
\end{eqnarray*}
where $s_0 = \frac{m}{2}-1$.

This proves the first formula of the theorem.

For the higher dimensional case, but for $s=\frac{m-n-1}{2}$ and $l=\frac{m}{2}$, we have $a=\frac{1}{2}m$ and $b=0$ (holomorphic Eisenstein series) and we get for $m>2n$:
\begin{gather*}
\det(\alpha)^{-\frac{m}{2}}e^{ \pi\tr({^t}\alpha \gamma \alpha )} W_{Q,\infty}(\Psi_\infty(s), g_l(\alpha)) = \\
\det(\alpha)^{-\frac{m}{2}}e^{ \pi\tr({^t}\alpha \gamma \alpha )}
\det(\alpha)^{\frac{m}{2}} \int_{(\R^n\otimes \R^n)^s} \det(X+\alpha {^t}\alpha i)^{-\frac{1}{2}m} e^{- \pi i \tr (X \gamma) } \dd X \\
= e^{ \pi\tr({^t}\alpha \gamma \alpha )} e^{-i\pi\frac{nm}{4}} (2\pi)^{\frac{1}{2}nm}
\int_{(\R^n\otimes \R^n)^s} \det(2\pi i X+2 \pi \alpha {^t}\alpha)^{-\frac{1}{2}m} e^{\pi i \tr (X \gamma)} \dd X \\
= \begin{cases}
e^{-i\pi\frac{nm}{4}} \Gamma_{n,m}(0) \det(\gamma)^{\frac{m-n-1}{2} }
& \gamma>0 , \\
0 & \text{otherwise,} \end{cases}
\end{gather*}
using \cite[p. 174, (1.23)]{Shimura}.
\end{proof}

\section{The local orbit equation --- non-Archimedian case}

\begin{PAR}\label{VOLUMEFIBRATION}
Let $R$ be a $\Qp$ or $\R$.
Let $\nM$, $\nN$ and $\nL$ be $R$-vector spaces with non-degenerate quadratic forms $Q_\nM, Q_\nN, Q_\nL$ respectively.

Consider the composition map:
\[ \nIsome(\nN, \nM) \times \nIsome(\nM, \nL) \rightarrow \nIsome(\nN, \nL). \]
Fixing an $\alpha$ in $\nIsome(\nN, \nM)$, we may identify the fibre of the resulting map
\begin{eqnarray}\label{AB1}
 \nIsome(\nM, \nL) &\rightarrow& \nIsome(\nN, \nL) \\
 \delta &\mapsto& \delta \circ \alpha \nonumber
\end{eqnarray}
over $\beta \in \nIsome(\nN, \nL)$ with $\nIsome(\im(\alpha)^\perp, \im(\beta)^\perp)$.

All recursive properties that we will investigate in this section follow formally from the following theorem:
\end{PAR}

\begin{SATZ}\label{LEMMAVOLUMEFIBRATION}
The resulting fibration is compatible with the 
canonical measures. If $\dim(\nM)=\dim(\nN)$ this means that the map (\ref{AB1}) preserves volume.
\end{SATZ}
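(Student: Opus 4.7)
The plan is to prove this by systematically decomposing the defining fibration \ref{MASSSEQUENZ} of the canonical measure along an orthogonal splitting induced by $\alpha$, and then verifying that all canonical measures split as tensor products compatibly.

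\textbf{Step 1 (orthogonal splitting).} Since $\alpha$ is an isometry and $Q_\nN$ is non-degenerate, $\im(\alpha) \subset \nM$ is a non-degenerate subspace; thus one has an orthogonal decomposition $\nM = \im(\alpha) \oplus \nM'$ with $\nM' := \im(\alpha)^\perp$, and I identify $\im(\alpha)$ with $\nN$ via $\alpha$. Analogously, for $\beta \in \nIsome(\nN, \nL)$ one gets $\nL = \im(\beta) \oplus \im(\beta)^\perp$. Using multiplicativity of the discriminant on orthogonal direct sums, $d(\nM) = d(\nN) \cdot d(\nM')$, a direct computation in block-diagonal bases together with the explicit formulas of \ref{DEFCANVOL} yields the tensor product identities
\begin{equation*}
\mu_{\nL \otimes \nM^*} = \mu_{\nL \otimes \nN^*} \otimes \mu_{\nL \otimes {\nM'}^*},
\end{equation*}
\begin{equation*}
\mu_{\Sym^2(\nM^*)} = \mu_{\Sym^2(\nN^*)} \otimes \mu_{\nN^* \otimes {\nM'}^*} \otimes \mu_{\Sym^2({\nM'}^*)}.
\end{equation*}

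\textbf{Step 2 (chain of fibrations).} Writing $\delta = (\delta_1, \delta_2)$ with $\delta_1 \in \nN^* \otimes \nL$ and $\delta_2 \in {\nM'}^* \otimes \nL$, one computes $\delta^! Q_\nL = \bigl(\delta_1^! Q_\nL,\, \langle \delta_1(-), \delta_2(-)\rangle,\, \delta_2^! Q_\nL\bigr)$ in the block decomposition of $\Sym^2(\nM^*)$. Setting this equal to $Q_\nM = (Q_\nN, 0, Q_{\nM'})$ shows that $\nIsome(\nM, \nL)$ consists of pairs $(\delta_1, \delta_2)$ with $\delta_1 = \beta \in \nIsome(\nN, \nL)$, $\im(\delta_2) \perp \im(\beta)$, and $\delta_2^! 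Q_\nL = Q_{\nM'}$. The perpendicularity condition, combined with the non-degeneracy of $\im(\beta)$, forces $\delta_2$ to land in $\im(\beta)^\perp$, so the fiber over $\beta$ is precisely $\nIsome(\nM', \im(\beta)^\perp) \cong \nIsome(\im(\alpha)^\perp, \im(\beta)^\perp)$ as asserted.

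\textbf{Step 3 (measure bookkeeping).} The canonical measure on $\nIsome(\nM, \nL)$ is $\mu_{\nM^* \otimes \nL}/\mu_{\Sym^2(\nM^*)}$ along the fibration of \ref{MASSSEQUENZ}. Substituting the factorizations of Step 1 and regrouping,
\begin{equation*}
\mu_{\nIsome(\nM, \nL)} \;=\; \frac{\mu_{\nN^* \otimes \nL}}{\mu_{\Sym^2(\nN^*)}} \;\otimes\; \frac{\mu_{{\nM'}^* \otimes \nL}}{\mu_{\nN^* \otimes {\nM'}^*} \otimes \mu_{\Sym^2({\nM'}^*)}}.
\end{equation*}
The first factor is $\mu_{\nIsome(\nN, \nL)}$. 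For the second factor, I split ${\nM'}^* \otimes \nL = ({\nM'}^* \otimes \im(\beta)) \oplus ({\nM'}^* \otimes \im(\beta)^\perp)$; the map $\delta_{2a} \mapsto \langle \beta(-), \delta_{2a}(-)\rangle$ is the linear isomorphism ${\nM'}^* \otimes \im(\beta) \to \nN^* \otimes {\nM'}^*$ induced by the isometry $\beta: \nN \xrightarrow{\sim} \im(\beta)$, which identifies the canonical measures on source and target. Dividing through leaves $\mu_{{\nM'}^* \otimes \im(\beta)^\perp}/\mu_{\Sym^2({\nM'}^*)} = \mu_{\nIsome(\nM', \im(\beta)^\perp)}$, proving compatibility. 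In the special case $\dim \nM = \dim \nN$, $\nM'$ is trivial, the fiber is a single point of volume $1$, and the statement reduces to the map (\ref{AB1}) being measure-preserving.

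\textbf{Main obstacle.} The substantive content is the bookkeeping in Steps 1 and 3: tracking the exponents in the factors $|d(\cdot)|^{(\cdot)/2}$ appearing in the canonical measures of $\nL \otimes \nM^*$ and $\Sym^2(\nM^*)$ through both the orthogonal splitting of $\nM$ and of $\nL$, and verifying that the linear isomorphism ${\nM'}^* \otimes \im(\beta) \xrightarrow{\sim} \nN^* \otimes {\nM'}^*$ given by contraction with the bilinear form on $\im(\beta)$ really preserves canonical measures. Once these balance (which they do, thanks precisely to the way the exponents in \ref{DEFCANVOL} were chosen), the categorical compatibility of the fibration-quotient construction does the rest.
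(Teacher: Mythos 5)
Your proposal is correct and follows essentially the same approach as the paper's own proof: you split $\nM$ orthogonally along $\im(\alpha)$, decompose $\Sym^2(\nM^*)$ and $\nM^*\otimes\nL$ accordingly, observe that the contraction map $\delta_2 \mapsto \langle\beta,\delta_2\rangle$ is a measure-preserving isomorphism ${\nM'}^*\otimes\im(\beta) \xrightarrow{\sim} \nN^*\otimes{\nM'}^*$ because $\beta$ is an isometry, and then cancel to recognize the induced fiber measure as the canonical one on $\nIsome(\im(\alpha)^\perp, \im(\beta)^\perp)$. The paper organizes the same computation as a commutative diagram of fibrations with measure-exact vertical sequences, but the underlying ideas and the decisive step are identical.
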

\begin{proof}
Let $Q_\nM$ be the chosen form on $\nM$. By assumption, $\alpha^!Q_\nM$ is the chosen form $Q_\nN$.

We decompose $\nM$ in $\nM_1 = \alpha(\nN)$ and $\nM_2 = \nM_1^\perp$ (orthogonal with respect to $Q_\nM$).

Decompose $\Sym^2(\nM^*)$ with respect to $Q_\nM$:
\[
\Sym^2(\nM^*) = \Sym^2(\nM_1^*)  \oplus (\nM_1^*\otimes \nM_2^*) \oplus \Sym^2(\nM_2^*),
\]
where $\Sym^2(\nM_1^*) = \Sym^2(\nN^*)$ via $\alpha$.

We get a commutative diagram of fibrations:
\[
\xymatrix{
 \nIsome(\alpha(\nN)^{\perp \gamma}, \beta(\nN)^\perp) \ar@{^{(}.>}[r] \ar@{^{(}->}[d] & \nM_2^* \otimes \nL
 \ar[rr]^-{ \delta \mapsto \M{ \delta^!Q_\nL\\ \langle \beta, \delta \rangle_{\nL} }} \ar@{^{(}->}[d]^{incl. + \beta \circ \pr_1} &&
 \Sym^2(\nM_2^*) \ar@{^{(}->}[d] \oplus (\nM_1^*\otimes \nM_2^*) \ar@{^{(}->}[d]^{incl. + \beta^! Q_\nL }\\
 \nIsome(\nM^\gamma, \nL) \ar@{^{(}->}[r] \ar[d]^{\circ \alpha} & \nM^* \otimes \nL \ar@{->>}[d]^{\circ \alpha} \ar[rr]^{\delta \mapsto \delta^!Q_\nL} && \Sym^2(\nM^*) \ar@{->>}[d]^{\alpha^!}  \\
 \nIsome(\nN^{\alpha^!\gamma}, \nL) \ar@{^{(}->}[r] & \nN^* \otimes \nL \ar[rr]^{\delta \mapsto \delta^!Q_\nL} & & \Sym^2(\nN^*)
}
\]
where $\beta$ varies in $\nN^* \otimes \nL$ such that $\beta^! Q_\nL$ varies in a neighborhood of $Q_\nN$
and $\gamma$ varies in $\Sym^2(\nM^*)$ such that $\beta^! Q_\nL = \alpha^! \gamma$.
$\nIsome(\nM^\gamma, \nL)$ is the fibre of the map $\delta \mapsto \delta^!Q_\nL$ in the middle row over $\gamma$ and 
$\nIsome(\nN^{\alpha^!\gamma}, \nL)$ is the fibre of the map $\delta \mapsto \delta^!Q_\nL$ in the bottom row above $\alpha^! \gamma$.
$\nIsome(\alpha(\nN)^{\perp \gamma}, \beta(\nN)^\perp)$ in the top-left corner is the fibre of the composition with $\alpha$ above $\beta$.

In the diagram, the vertical middle and rightmost fibrations come from (splitting) exact sequences of vector spaces.

The dotted map is defined by commutativity of the diagram. First observe that the (underlying) vertical 
exact sequences of vectorspaces are exact with respect to canonical measures on the various spaces associated with
$Q_\nM, Q_\nL$ and $\alpha^!Q_\nM=Q_\nN$ and the restrictions of $Q_\nM$ to $\nM_1, \nM_2$ respectively. The induced measure on
$\nIsome(\alpha(\nM)^{\perp \gamma}, \beta(\nN)^\perp)$ hence is described by the topmost {\em horizontal} fibration as well.

Decompose $\nM_2^* \otimes \nL = \nM_2^* \otimes (\beta(\nN) \oplus \beta(\nN)^\perp)$.
The map $\delta \mapsto \langle \beta, \delta \rangle_{Q_\nL}$ is an isomorphism $(\nM_2^* \otimes \beta(\nN)) \cong (\nM_1^* \otimes \nM_2^*)$ and
0 on the other factor. This isomorphism preserves the canonical volume, if $\beta^! Q_\nL = Q_\nN$.

Letting $\gamma$ vary only in $\Sym^2(\nM_2^*)$ fixing the other projections to 0 in $\nM_1^* \otimes \nM_2^*$ and to $Q_\nN$ in $\Sym(\nM_1^*)$ 
(i.e. having also $\beta^! Q_\nL = Q_\nN$), we get an equivalent topmost horizontal fibration:
\[
\xymatrix{
\nIsome(\alpha(\nN)^{\perp \gamma}, \beta(\nN)^\perp) \ar@{^{(}.>}[r] & \nM_2^* \otimes \beta(\nN)^\perp \ar[rr]^{ \delta \mapsto \delta^! Q_\nL } &&
 \Sym^2(\nM_2^*) }
\]
and the dotted map is equal to the canonical inclusion into $\alpha(\nN)^{\perp *} \otimes \beta(\nN)^\perp$, noting $\alpha(\nN)^{\perp *} = \nM_2^*$.
The induced measure on $\nIsome(\alpha(\nN)^{\perp \gamma}, \beta(\nN)^\perp)$, however, is {\em by definition} the canonical measure.
\end{proof}

Consider the situation of \ref{VOLUMEFIBRATION} for $R=\Qp$. 
Consider $\nN_\Qp$ via $\alpha \in \nN_\Qp^* \otimes \nM_\Qp$ as subspace of $\nM_\Qp$. Choose lattices $\nN_\Zp \subset \nM_\Zp$ such that
$\nM_\Zp = \nN_\Zp \oplus \nN_\Zp'$. Choose a third lattice $\nL_\Zp$ as well and assume that $Q_\nL \in \Sym^2(\nL^*_\Zp)$. 
Choose a coset $\kappa \in (\nL_\Zp^*/\nL_\Zp)\otimes \nM_\Zp^*$. 
Denote $\kappa = \kappa_{\nN} \oplus \kappa_{\nN'}$ the corresponding decomposition of $\kappa$.  

\begin{KOR}[Kitaoka's formula]\label{KITAOKA}
\[ \mu_p(\nL_\Zp, \nM_\Zp, \kappa) = \sum_{\substack{\SO'(\nL_\Zp) \alpha \subseteq \nIsome(\nN, \nL)(\Af) \cap \kappa_\nN \\ \kappa_{\nN'} \cap \alpha(\nN)_\Qp^\perp \otimes (\nN'_\Qp)^* \not= \emptyset }} \mu_p(\nL_\Zp, \nN_\Zp, \kappa_\nN; \alpha) \mu_p(\alpha(\nN)_\Zp^\perp, \nN_\Zp', \kappa_{\nN'}), \]
where $\kappa_{\nN'}$ is considered as an element of $((\alpha(\nN)_\Zp^\perp)^* / \alpha(\nN)_\Zp^\perp) \otimes (\nN_\Zp')^*$ via intersection with $\alpha(\nN)_\Qp^\perp \otimes (\nN_\Qp')^*$ and $\mu_p(\nL_\Zp, \nN_\Zp, \kappa_\nN; \alpha)$ is the volume of the $\SO'$-orbit of $\alpha$.
\end{KOR}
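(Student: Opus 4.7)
The strategy is to fibrate by restriction. Let
\[ \pi \colon \nIsome(\nM, \nL)(\Qp) \longrightarrow \nIsome(\nN, \nL)(\Qp), \qquad \delta \longmapsto \delta|_{\nN}, \]
be the map induced by the inclusion $\nN \hookrightarrow \nM$. This is exactly the map (\ref{AB1}) of \ref{VOLUMEFIBRATION} applied with the inclusion $\nN \hookrightarrow \nM$ playing the role of $\alpha$, and by Theorem \ref{LEMMAVOLUMEFIBRATION} the canonical measure $\mu_{\nL,\nM}$ decomposes as a product along $\pi$: the pushforward on the base is the canonical measure $\mu_{\nL,\nN}$, and the fibre over $\alpha$ is identified (after, if necessary, replacing $\nN'_\Zp$ by a complement orthogonal to $\nN_\Zp$ in $\nM_\Zp$, which is possible since $p\neq 2$) with $\nIsome(\nN', \alpha(\nN)^\perp)(\Qp)$ equipped with its own canonical measure.

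The next step is to integrate the characteristic function $\mathbf{1}_\kappa$ along this fibration. The dual splitting $\nM^*_\Zp = \nN^*_\Zp \oplus (\nN')^*_\Zp$ gives $\kappa = \kappa_\nN \oplus \kappa_{\nN'}$ and the factorization $\mathbf{1}_\kappa(\delta) = \mathbf{1}_{\kappa_\nN}(\delta|_\nN) \cdot \mathbf{1}_{\kappa_{\nN'}}(\delta|_{\nN'})$. Applying Fubini and then decomposing the base integral into $\SO'(\nL_\Zp)$-orbits yields
\[ \mu_p(\nL_\Zp, \nM_\Zp, \kappa) = \sum_{\SO'(\nL_\Zp)\alpha \subseteq \nIsome(\nN,\nL)(\Qp) \cap \kappa_\nN} \mu_p(\nL_\Zp, \nN_\Zp, \kappa_\nN; \alpha) \cdot V(\alpha), \]
where $V(\alpha) := \int_{\pi^{-1}(\alpha)} \mathbf{1}_{\kappa_{\nN'}}(\delta|_{\nN'}) \, d\mu_{\text{fibre}}$. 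The factorization is valid because $V(\alpha)$ is constant on each $\SO'(\nL_\Zp)$-orbit (the group acts equivariantly on the whole fibration through its left action on $\nL$, which fixes $\kappa$ by definition of the discriminant kernel).

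The final step is to identify $V(\alpha)$ with $\mu_p(\alpha(\nN)^\perp_\Zp, \nN'_\Zp, \kappa_{\nN'})$. Under the isomorphism $\pi^{-1}(\alpha) \cong \nIsome(\nN', \alpha(\nN)^\perp)(\Qp)$ via $\delta \mapsto \delta|_{\nN'}$, the condition $\delta|_{\nN'} \in \kappa_{\nN'}$ becomes the coset condition for $\kappa_{\nN'}$ regarded inside $((\alpha(\nN)^\perp_\Zp)^*/\alpha(\nN)^\perp_\Zp) \otimes (\nN')^*_\Zp$, which is satisfiable only when $\kappa_{\nN'} \cap \alpha(\nN)_\Qp^\perp \otimes (\nN'_\Qp)^* \neq \emptyset$ --- exactly the summation condition in the statement. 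The main obstacle is the verification that the canonical fibre measure produced by Theorem \ref{LEMMAVOLUMEFIBRATION} coincides with the canonical measure on the fibre viewed as a standalone $\nIsome$-variety with respect to $Q_\nL|_{\alpha(\nN)^\perp}$ and $Q_\nM|_{\nN'}$; this is precisely what the topmost horizontal row of the diagram in the proof of that theorem establishes, once one observes that the restriction of the canonical measure on $\nL$ to $\alpha(\nN)^\perp$ is the canonical one for the restricted quadratic form.
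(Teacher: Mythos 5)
Your proof is correct and follows essentially the same route as the paper's: integrate $\mathbf{1}_\kappa$ over $\nIsome(\nM,\nL)(\Qp)$, fibrate by the restriction map $\delta\mapsto\delta|_\nN$, invoke Theorem \ref{LEMMAVOLUMEFIBRATION} to control the measures, decompose the base integral into $\SO'(\nL_\Zp)$-orbits using that the fibre volume is constant on orbits (because $\SO'$ acts on the fibration on the $\nL$-side and fixes $\kappa$), and identify the fibre integral with $\mu_p(\alpha(\nN)^\perp_\Zp,\nN'_\Zp,\kappa_{\nN'})$. One small inaccuracy: the aside about ``replacing $\nN'_\Zp$ by an orthogonal complement\ldots possible since $p\neq2$'' is neither needed nor quite right --- the paper's setup only assumes a lattice direct-sum $\nM_\Zp=\nN_\Zp\oplus\nN'_\Zp$, the $p\neq2$ hypothesis is not in force at this point in the paper, and whether $\nN_\Zp$ splits off orthogonally in $\nM_\Zp$ is a question of unimodularity of $Q_\nM|_{\nN_\Zp}$ (Lemma \ref{KITAOKATSSS}), not of $p$. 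Dropping that parenthetical removes the only blemish.
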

\begin{proof}
Integrate the characteristic function of 
$\kappa$ over $\nIsome(\nM, \nL)(\Qp)$
with respect to the canonical measure.

The intersections of $\kappa$ with the fibres of the (restriction) map $\nIsome(\nM, \nL)(\Qp) \rightarrow \nIsome(\nN, \nL)(\Qp)$ over $\beta \in \nIsome(\nN, \nL)(\Qp)$ can be identified with
those isometries in $\nIsome(\nN', \alpha(\nN)^\perp)(\Qp)$ which lie in $\kappa_{\nN'}$.

The volume of these sets is constant, for conjugated $\alpha$. Hence the formula follows from Theorem \ref{LEMMAVOLUMEFIBRATION}.
\end{proof}

This is a slight generalization of the case $\kappa = \nL_\Zp \otimes \nM^*_\Zp$ where we get explicitly (for the representation densities $\beta$  instead of the $\mu$'s):
\begin{KOR}[{\cite[Theorem 5.6.2]{Kitaoka}}]\label{KITAOKA2}
\begin{eqnarray*}
\beta_p(\nL_\Zp, \nM_\Zp)
&=& \sum_i
\left(\frac{d_p(K_i^\perp)}{d_p(\nN_\Zp)d_p(\nL_\Zp)}\right)^{\frac{n-k}{2}} \beta(\nL_\Zp, \nN_\Zp; K_i) \beta(K_i^\perp, \nN_\Zp'),
\end{eqnarray*}
where we numbered the orbits and wrote $K_i$ for $\alpha_i(\nN_\Zp)$ and $k$ is the dimension of $\nN$. Here one is allowed to take $\SO$-orbits, too.
\end{KOR}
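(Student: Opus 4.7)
The plan is to derive this as a direct consequence of Corollary \ref{KITAOKA} applied with $\kappa = \nL_\Zp \otimes \nM^*_\Zp$, followed by converting the canonical volumes $\mu_p$ into the classical representation densities $\beta_p$ via Lemma \ref{REPDENSCLASSICAL}(3). The essence is mere bookkeeping of discriminant exponents, so the hard part will be making sure the algebra of exponents lines up.

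Concretely, with $\kappa = \nL_\Zp \otimes \nM^*_\Zp$ one has $\kappa_\nN = \nL_\Zp \otimes \nN^*_\Zp$ and $\kappa_{\nN'} = \nL_\Zp \otimes (\nN'_\Zp)^*$, and the intersection conditions are automatic for representatives $\alpha_i$ coming from lattice-valued maps. Corollary \ref{KITAOKA} thus specialises to
\[ \mu_p(\nL_\Zp, \nM_\Zp) = \sum_i \mu_p(\nL_\Zp, \nN_\Zp; \alpha_i)\, \mu_p(K_i^\perp, \nN'_\Zp), \]
the sum being over $\SO'(\nL_\Zp)$-orbits in $\nIsome(\nN, \nL)(\Zp)$, with $K_i = \alpha_i(\nN_\Zp)$.

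Next I would invoke Lemma \ref{REPDENSCLASSICAL}(3). Writing $n = \dim \nM$, $m = \dim \nL$, $k = \dim \nN$, and noting $\dim K_i^\perp = m-k$, this gives
\begin{align*}
\mu_p(\nL_\Zp, \nM_\Zp) &= |d(\nL_\Zp)|_p^{n/2}\, |d(\nM_\Zp)|_p^{(n-m+1)/2}\, \beta_p(\nL_\Zp, \nM_\Zp), \\
\mu_p(\nL_\Zp, \nN_\Zp; \alpha_i) &= |d(\nL_\Zp)|_p^{k/2}\, |d(\nN_\Zp)|_p^{(k-m+1)/2}\, \beta_p(\nL_\Zp, \nN_\Zp; K_i), \\
\mu_p(K_i^\perp, \nN'_\Zp) &= |d(K_i^\perp)|_p^{(n-k)/2}\, |d(\nN'_\Zp)|_p^{(n-m+1)/2}\, \beta_p(K_i^\perp, \nN'_\Zp),
\end{align*}
where in the last line the exponent $((n-k)-(m-k)+1)/2$ simplifies to $(n-m+1)/2$. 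Using the orthogonal decomposition $\nM_\Zp = \nN_\Zp \oplus \nN'_\Zp$, hence $d(\nM_\Zp) = d(\nN_\Zp)\, d(\nN'_\Zp)$, the $\nN'$- and $\nL$-factors cancel on dividing the product of RHS-conversions by the LHS-conversion, leaving precisely
\[ \left( \frac{|d(K_i^\perp)|_p}{|d(\nN_\Zp)|_p\, |d(\nL_\Zp)|_p} \right)^{(n-k)/2}, \]
which is Kitaoka's discriminant correction (with $d_p = |d(\cdot)|_p$).

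For the concluding remark that one may equally well sum over $\SO$-orbits, observe that if an $\SO(\nL_\Zp)$-orbit $\SO\alpha_i$ splits into $r_i$ different $\SO'(\nL_\Zp)$-orbits, the orbital volume $\mu_p(\nL_\Zp, \nN_\Zp; \alpha_i)$ (and thus the $\beta_p$-factor) scales by the common factor $r_i$, while the lattices $K_{i,j}^\perp$ attached to representatives inside one $\SO$-orbit are $\SO(\nL_\Zp)$-conjugate, hence isometric, so $\beta_p(K_{i,j}^\perp, \nN'_\Zp)$ and $|d(K_{i,j}^\perp)|_p$ are constants along the $\SO$-orbit. Collecting the $r_i$ equal summands reproduces the same total. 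The only genuine technical step is the exponent accounting, and this becomes transparent once the multiplicativity of discriminants on direct sums is used.
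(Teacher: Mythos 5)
Your proof is correct and follows the same route as the paper: specialise Corollary \ref{KITAOKA} to $\kappa = \nL_\Zp \otimes \nM^*_\Zp$, convert each $\mu_p$ into $\beta_p$ via the discriminant factors $D(\cdot,\cdot)$ of Lemma \ref{REPDENSCLASSICAL}(3), and rearrange exponents using $d(\nM_\Zp)=d(\nN_\Zp)\,d(\nN'_\Zp)$. Your supplementary justification of the $\SO$-orbit remark (normality of $\SO'$ in $\SO(\nL_\Zp)$ forces equal $\SO'$-orbit volumes within an $\SO$-orbit, and the $K_{i,j}^\perp$ are $\SO$-conjugate hence have the same $\beta$ and discriminant) fills in a detail the paper leaves tacit.
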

\begin{proof}
The formula of the last corollary yields:
\begin{eqnarray*}
&& d_p(\nM_\Zp)^{\frac{n-m+1}{2}}
d_p(\nL_\Zp)^{\frac{n}{2}}
\beta_p(\nL_\Zp, \nM_\Zp) \\
&=&
\sum_i
d_p(\nN_\Zp)^{\frac{k-m+1}{2}}
d_p(\nL_\Zp)^{\frac{k}{2}}
\beta_p(\nL_\Zp, \nN_\Zp; K_i) \\
&\cdot&
d_p(\nN_\Zp')^{\frac{n-m+1}{2}}
d_p(K_i^\perp)^{\frac{n-k}{2}}
\beta_p(K_i^\perp, \nN'_\Zp) \\
\end{eqnarray*}
and after a reordering of the discriminant factors the statement follows.
\end{proof}

\begin{PAR}
Consider again an $\alpha \in \nIsome(\nM, \nL)$ and the resulting map:
\begin{eqnarray*}
\nIsome(\nL, \nL) &\rightarrow& \nIsome(\nM, \nL) ,\\
\delta  &\mapsto& \delta \circ \alpha.
\end{eqnarray*}
The fibre of this map over $\beta \in \nIsome(\nM, \nL)$ can again be identified with $\nIsome(\alpha(\nL)^\perp, \beta(\nL)^\perp)$.
In particular the fibre over $\alpha$ is an orthogonal group again.
We denote by $\SO(\nL_\Qp)$, respectively $\SO(\alpha(\nL_\Qp)^\perp)$ the corresponding {\em special} orthogonal groups.

Choose lattices $\nL_\Zp, \nM_\Zp$, such that $Q_\nL \in \Sym^2(\nL_\Zp)$.  Let $\kappa$ be a class in $(\nL_\Zp^*/\nL_\Zp) \otimes \nM_\Zp^*$.

Consider the discriminant kernel $\SO'(\nL_\Zp) \subseteq \SO(\nL_\Qp)$ of $\nL_\Zp$, , i.e. the kernel of the induced homomorphism
$\SO(\nL_\Zp) \rightarrow \Aut(\nL_\Zp^* / \nL_\Zp)$.

It is a compact open subgroup.
Consider the orbits of $\SO'(\nL_\Zp)$ acting on $\nIsome(\nM, \nL) \cap \kappa$.
The fibres over an orbit $\SO'(\nL_\Zp) \alpha$ all can be identified with $\SO'(\alpha_i(\nM)^\perp \cap \nL_\Zp)$.
This follows from Lemma~\ref{DISKRIMINANTENKERN} in the appendix.
\end{PAR}

\begin{KOR}[{elementary orbit equation}]\label{elementaryorbiteq}
\begin{equation*}
 \vol(\SO'(\nL_\Zp))^{-1} \mu_p(\nL_\Zp, \nM_\Zp, \kappa) =  \sum_{\SO'(\nL_\Zp)\alpha \subseteq \nIsome(\nM, \nL)(\Qp) \cap \kappa}
\vol(\SO'(\alpha(\nM)^\perp_\Zp)^{-1},
\end{equation*}
with the convention of \ref{DEFI}.
\end{KOR}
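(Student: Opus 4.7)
The plan is to apply Theorem \ref{LEMMAVOLUMEFIBRATION} to the composition map
\[ \SO(\nL_\Qp) \longrightarrow \nIsome(\nM,\nL)(\Qp), \qquad \delta \mapsto \delta \circ \alpha_i, \]
for each choice of orbit representative $\alpha_i$, and then to restrict to the compact open subgroup $\SO'(\nL_\Zp)$. Since $\dim \nL = \dim \nL$, the theorem tells us that the map preserves the canonical measures, so the volume of the image coincides with the volume of the $\SO'(\nL_\Zp)$-orbit $\SO'(\nL_\Zp)\alpha_i$ divided by the volume of the fibre over $\alpha_i$.

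First I would enumerate the (finitely many) $\SO'(\nL_\Zp)$-orbits $\SO'(\nL_\Zp)\alpha_i$ in $\nIsome(\nM,\nL)(\Qp) \cap \kappa$, using the compactness of $\SO'(\nL_\Zp)$ and the local compactness of $\nIsome(\nM,\nL)(\Qp)\cap \kappa$. Additivity of $\mu_p$ over the orbit decomposition then gives
\[ \mu_p(\nL_\Zp,\nM_\Zp,\kappa) = \sum_i \vol(\SO'(\nL_\Zp)\alpha_i), \]
where the volume is taken with respect to the canonical measure on $\nIsome(\nM,\nL)(\Qp)$.

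Next, for each $i$, I would examine the fibre of $\delta \mapsto \delta \circ \alpha_i$ over $\alpha_i$ itself: this fibre is exactly the stabilizer of $\alpha_i$ in $\SO'(\nL_\Zp)$. By the general identification in Section \ref{VOLUMEFIBRATION}, the stabilizer inside the full orthogonal group $\SO(\nL_\Qp)$ is $\SO(\alpha_i(\nM)^\perp_\Qp)$, embedded by extending by the identity on $\alpha_i(\nM)$. Intersecting with $\SO'(\nL_\Zp)$ yields $\SO'(\alpha_i(\nM)^\perp_\Zp)$; this is precisely the content of Lemma \ref{DISKRIMINANTENKERN} cited in the text (an element of $\SO(\nL_\Zp)$ that fixes $\alpha_i$ pointwise and acts trivially on $\nL_\Zp^*/\nL_\Zp$ automatically acts trivially on $(\alpha_i(\nM)^\perp_\Zp)^*/\alpha_i(\nM)^\perp_\Zp$, and conversely). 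Applying Theorem \ref{LEMMAVOLUMEFIBRATION} to the composition map restricted to $\SO'(\nL_\Zp)$, and noting that both canonical measures are invariant under left translation by $\SO'(\nL_\Zp)$, one obtains
\[ \vol(\SO'(\nL_\Zp)) = \vol(\SO'(\alpha_i(\nM)^\perp_\Zp)) \cdot \vol(\SO'(\nL_\Zp)\alpha_i). \]

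Finally, solving for $\vol(\SO'(\nL_\Zp)\alpha_i)$ and summing over the orbits yields the claim after dividing through by $\vol(\SO'(\nL_\Zp))$. The main technical point — and essentially the only nontrivial step — is the identification of the stabilizer with the discriminant kernel $\SO'(\alpha_i(\nM)^\perp_\Zp)$; this is exactly where the restriction to the discriminant kernel (rather than the full $\SO(\nL_\Zp)$) is used, and it rests on the auxiliary lemma from the appendix. Everything else is a direct bookkeeping application of the volume-preserving fibration \ref{LEMMAVOLUMEFIBRATION}, together with the convention \ref{DEFI} that makes $\alpha_i(\nM)^\perp_\Zp$ well-defined up to genus (which is all that matters for the volume of $\SO'$).
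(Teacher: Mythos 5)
Your proof is correct and follows essentially the same route as the paper: identify each fibre of $\delta \mapsto \delta\circ\alpha_i$ (restricted to $\SO'(\nL_\Zp)$) with $\SO'(\alpha_i(\nM)^\perp_\Zp)$ via Lemma \ref{DISKRIMINANTENKERN}, invoke Theorem \ref{LEMMAVOLUMEFIBRATION} to get $\vol(\SO'(\nL_\Zp)) = \vol(\SO'(\alpha_i(\nM)^\perp_\Zp)) \cdot \vol(\SO'(\nL_\Zp)\alpha_i)$, and sum over orbits. You have merely spelled out the steps that the paper compresses into the surrounding discussion and a two-line proof.
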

\begin{proof}
From the theorem 
 \[ \vol(\SO'(\alpha(\nM)^\perp_\Zp) \mu_p(\nL_\Zp, \nM_\Zp, \kappa; \alpha) = \vol(\SO'(\nL_\Zp)),
 \]
follows immediately, where in $\mu_p(\nL_\Zp, \nM_\Zp, \kappa; \alpha)$ we mean the volume of the orbit of $\alpha$.

Summed up over all orbits, we get the required statement.
\end{proof}

\begin{PAR}
Let $H_\Zp$ be a hyperbolic plane, $\varphi \in S(\nL_\Qp \otimes \nM_\Qp^*)$ 
and form
\[ \varphi^{(s)} := \varphi \tensor \chi_{H_\Zp^s \otimes \nM_\Zp^*} \in S((\nL_\Qp \perp H_\Qp^s) \otimes \nM_\Qp^*). \]

We are interested in the function 
\[ s \mapsto \mu_p(\nL \perp H^s, \nM_\Zp, \varphi^{(s)}), \]
for $s \in \Z_{\ge 0}$. We will denote the so constructed `interpolation' of $\mu_p(\nL_\Zp, \nM_\Zp, \kappa)$ 
by $\mu_p(\nL_\Zp, \nM_\Zp, \kappa; s)$. 
This construction is motivated by the natural continuation of Fourier coefficients of Eisenstein series, see \ref{REPDENSNONARCHIMEDIANINTERPOLATION}.
Note that the $\mu_p$ as a function of $s$ now depend on the lattice $\nM_\Zp$, too, and not only on the choice of a $\varphi \in S(\nL_\Qp \otimes \nM^*_\Qp)$. 
\end{PAR}

\begin{PAR}\label{CONTINUATIONLAMBDAMU}
We will now construct an `interpolation' of the volume of the orthogonal group, or, more precisely, of its discriminant kernel
such that the above orbit equation remains true as an identity of functions in $s$.

Assume again, $Q_\nM$, $Q_\nL$ non-degenerate. As a first step, we have
\begin{gather*}
\vol(\SO'(\nL_\Zp \oplus H_\Zp^s))^{-1} \mu_p(\nL_\Zp, \nM_\Zp, \kappa; s) \\
= \sum_{\SO'(\nL_\Zp)\alpha \subseteq \nIsome(\nM, \nL)(\Qp) \cap \kappa} \vol(\SO'(\alpha(\nM_\Zp)^\perp)^{-1}.
\end{gather*}
The stability of these orbits for arbitrary $s$ in this formula will be shown (at least for $p \not=2$) in
Lemma~\ref{STABILITY} below. This occurs at least, if $\nL$ splits $n$ hyperbolic planes. Hence the equation determines $\mu_p$ in any case
for sufficiently large $s$, if the right hand side is interpreted appropriately.
\end{PAR}

\begin{DEF}\label{DEFLAMBDAMU}
We introduce the following notation:
\begin{eqnarray*}
\lambda_p(\nL_\Zp; s) &:=& \frac{\vol(\SO'(\nL_\Zp \oplus H_\Zp^s))}{\prod_{i=1}^{s}(1-p^{-2i})}, \\
\mu_p(\nL_\Zp, \nM_\Zp, \kappa; s) &:=& \vol(\nIsome(\nM, \nL \oplus H_\Zp^s)(\Qp) \cap \kappa \oplus H_\Zp^s\otimes \nM_\Zp^*) \\
&=& |d(\nL_\Zp)|^{\frac{n}{2}} |d(\nM_\Zp)|^{-s+\frac{1+n-m}{2}}  \beta_p(\nL_\Zp, \nM_\Zp, \kappa; s),
\end{eqnarray*}
cf. also Lemma \ref{REPDENSCLASSICAL}.
Here all volumes are understood to be calculated w.r.t. the canonical measures (\ref{DEFCANVOL}).
\end{DEF}

We have the following {\bf orbit equation}:
\begin{SATZ}\label{BAHNENGLEICHUNG}
Let $p \not=2$. Assume, that $\nL_\Zp$ splits $n$ hyperbolic planes. We have
\[ \lambda_p(\nL_\Zp; s)^{-1} \cdot  \mu_p(\nL_\Zp, \nM_\Zp, \kappa; s) =  \sum_{\SO'(\nL_\Zp)\alpha \subseteq \nIsome(\nM, \nL)(\Qp) \cap \kappa} \lambda_p(\alpha(\nM_\Zp)^\perp; s)^{-1}. \]
\end{SATZ}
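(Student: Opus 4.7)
The plan is to deduce this theorem directly from the elementary orbit equation (Corollary \ref{elementaryorbiteq}) applied after replacing $\nL_\Zp$ by the enlarged lattice $\nL_\Zp \oplus H_\Zp^s$, and then to repackage both sides using the definitions of $\lambda_p(\cdot;s)$ and $\mu_p(\cdot,\cdot,\cdot;s)$. Concretely, I would first apply Corollary \ref{elementaryorbiteq} to $\nL_\Zp \oplus H_\Zp^s$, $\nM_\Zp$, and the coset $\kappa \oplus H_\Zp^s \otimes \nM_\Zp^*$, obtaining
\begin{equation*}
\vol(\SO'(\nL_\Zp \oplus H_\Zp^s))^{-1} \mu_p(\nL_\Zp \oplus H_\Zp^s, \nM_\Zp, \kappa \oplus H_\Zp^s \otimes \nM_\Zp^*) = \sum_{\SO'(\nL_\Zp \oplus H_\Zp^s)\widetilde{\alpha}} \vol(\SO'(\widetilde{\alpha}(\nM)^\perp_\Zp))^{-1},
\end{equation*}
where the sum is over $\SO'$-orbits of isometries $\widetilde{\alpha} \in \nIsome(\nM, \nL \oplus H_\Zp^s)(\Qp)$ landing in the enlarged coset. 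By the defining equation of $\mu_p(\nL_\Zp,\nM_\Zp,\kappa;s)$ in \ref{DEFLAMBDAMU}, the left factor is precisely this quantity.

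The core step is the identification of the orbits appearing on the right hand side with those of the original $\SO'(\nL_\Zp)$-action on $\nIsome(\nM,\nL)(\Qp) \cap \kappa$. For this I would invoke the stability lemma \ref{STABILITY} (referenced in \ref{CONTINUATIONLAMBDAMU}): since $\nL_\Zp$ already splits $n$ hyperbolic planes, every orbit representative $\widetilde{\alpha} : \nM \to \nL \oplus H_\Zp^s$ can, by Witt's theorem applied $\SO'$-equivariantly, be conjugated into the sub-lattice $\nL_\Zp$, i.e.\ written as $\widetilde{\alpha} = \iota \circ \alpha$ for some $\alpha \in \nIsome(\nM,\nL)(\Qp) \cap \kappa$ and the canonical inclusion $\iota \colon \nL \hookrightarrow \nL \oplus H^s$. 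Moreover this identification is bijective on $\SO'$-orbits, and under it the orthogonal complement transforms as
\begin{equation*}
\widetilde{\alpha}(\nM)^\perp_\Zp \;=\; \alpha(\nM)^\perp_\Zp \oplus H_\Zp^s.
\end{equation*}

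With this bijection in hand, the remainder is a direct substitution using \ref{DEFLAMBDAMU}: we write
\begin{equation*}
\vol(\SO'(\nL_\Zp \oplus H_\Zp^s)) \;=\; \lambda_p(\nL_\Zp; s)\prod_{i=1}^{s}(1-p^{-2i})
\end{equation*}
on the left, and analogously
\begin{equation*}
\vol(\SO'(\alpha(\nM)^\perp_\Zp \oplus H_\Zp^s)) \;=\; \lambda_p(\alpha(\nM)^\perp_\Zp; s)\prod_{i=1}^{s}(1-p^{-2i})
\end{equation*}
on each summand of the right. The universal factor $\prod_{i=1}^{s}(1-p^{-2i})$ cancels between the two sides, yielding the claimed identity.

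The main technical obstacle is the orbit stability step, which is where the hypothesis $p \neq 2$ and the Witt-index assumption enter: one must verify that enlarging the ambient lattice by hyperbolic planes neither creates new $\SO'$-orbits (via Witt's theorem, to push representatives back into $\nL$) nor merges existing ones (so that the discriminant kernel still separates the classes already distinguished on $\nL_\Zp$). Once Lemma \ref{STABILITY} is available, everything else is formal bookkeeping with the definitions.
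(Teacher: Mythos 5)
Your proof is correct and follows exactly the route the paper takes (cf.\ the discussion in \ref{CONTINUATIONLAMBDAMU}): apply the elementary orbit equation \ref{elementaryorbiteq} to $\nL_\Zp \oplus H_\Zp^s$, use the stability Lemma \ref{STABILITY} to identify the orbits and their orthogonal complements as $\alpha(\nM)^\perp_\Zp \oplus H_\Zp^s$, and then cancel the common factor $\prod_{i=1}^s(1-p^{-2i})$ from Definition \ref{DEFLAMBDAMU}. The paper does not give a separate formal proof for the theorem, and your sketch supplies precisely the bookkeeping the paper leaves implicit.
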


\begin{BEM}\label{MUPOLYNOMIAL}
We will show in \ref{EXPLIZIT} that, if $s \ge 1$, $\lambda_p(\nL_\Zp; s)$
is an actually quite simple polynomial in $p^{-s}$. It is easy to see from the explicit formula that for any sublattice $\nL'_\Zp \subseteq \nL_\Zp$, $\lambda(\nL'_\Zp; s)$
divides $\lambda(\nL_\Zp, s)$ as a polynomial in $p^{-s}$.
Hence the $\mu_p(\nL_\Zp; \nM_\Zp, \kappa; s)$ are polynomials for sufficiently large $s$ (s.t. orbits are stable), and hence 
the orbit equation makes sense for all $s \in \C$.  The fact that the $\mu_p$'s are polynomials in $p^{-s}$ for large $s$ 
can also be proven using their equality with Whittaker functions (Theorem \ref{REPDENSNONARCHIMEDIANINTERPOLATION}). 

Observe: $\prod_{i=1}^{s}(1-p^{-2i}) = (1+p^{-s}) \vol(\SO(H^s))$ for $s \in \N$ (cf. \ref{EXPLIZIT}).
\end{BEM}

In \ref{EXPLIZIT} the occurring functions will be calculated explicitely in a lot of cases. We now turn
to the problem of stability of orbits:

\begin{LEMMA}\label{LEMMA2HYPERBOLICPLANE}
Consider $\nL_\Zp \oplus H^2= \nL_\Zp \oplus \Z_p^4$ (i.e. a space with quadratic form
$Q(x_\nL,x_0,\dots,x_3) = Q_\nL(x_\nL) + x_0 x_1 + x_2 x_3$). Let $w=(w_\nL,w_0,\dots,w_3) \in \nL_\Zp \oplus H^2$ be a vector with $Q(w)\not=0$. It follows
\[ <w>^\perp \equiv H \perp \Lambda. \]
\end{LEMMA}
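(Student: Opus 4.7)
The plan is to exhibit inside $\langle w\rangle^\perp$ a rank-two sublattice $N$ with unimodular Gram matrix and isomorphic to the standard $H$; since any unimodular sublattice of a non-degenerate $\Zp$-lattice splits off orthogonally, and since $w$ will lie in $N^\perp$ by construction, the decomposition $\nL_\Zp \oplus H^2 = N \perp N^\perp$ will restrict to $\langle w\rangle^\perp = N \perp (N^\perp \cap \langle w\rangle^\perp) = H \perp \Lambda$.

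First I would reduce to the case that $w$ is primitive: writing $w = p^k w'$ with $w'$ primitive leaves the orthogonal complement unchanged and forces $Q(w') \neq 0$. Decompose $w = w_\nL + w_{H^2}$. If $w_{H^2} = 0$ then $H^2 \subseteq \langle w\rangle^\perp$ and the claim is immediate, so assume $w_{H^2} \neq 0$ and write $w_{H^2} = p^j u$ with $u \in H^2$ primitive. The crucial observation is that
\[ \langle w\rangle^\perp \cap H^2 \;=\; u^\perp, \]
because for $x \in H^2$ one has $\langle w, x\rangle = \langle w_\nL, x\rangle + p^j \langle u, x\rangle = p^j \langle u, x\rangle$; so it suffices to produce $N$ already inside $u^\perp$.

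Next I normalize $u$. Via the action of $O(H^2)(\Zp)$ on $\nL_\Zp \oplus H^2$ (fixing $\nL_\Zp$) generated by the swaps $e_i \leftrightarrow f_i$ and by the interchange of the two hyperbolic summands, the four coordinates of $u$ can be permuted subject to the obvious pair structure; since $u$ is primitive, some coordinate is a unit and can be moved to the $f_1$-slot. So I may assume $u = u_0 e_0 + u_1 f_0 + u_2 e_1 + u_3 f_1$ with $u_3 \in \Zp^*$, and set
\[ v := u_3 e_0 - u_1 e_1, \qquad v' := u_3 f_0 - u_0 e_1. \]
Direct bilinear computations using $\langle e_i, f_j\rangle = \delta_{ij}$ give $Q(v) = Q(v') = 0$, $\langle v, u\rangle = u_3 u_1 - u_1 u_3 = 0$, $\langle v', u\rangle = u_3 u_0 - u_0 u_3 = 0$, and $\langle v, v'\rangle = u_3^2 \in \Zp^*$.

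Therefore $N := \Zp v + \Zp v' \subseteq u^\perp \subseteq \langle w\rangle^\perp$ has Gram determinant $-u_3^4 \in \Zp^*$, and after rescaling $v'$ by the unit $u_3^{-2}$ (which preserves $\Zp v'$) its Gram matrix is the standard hyperbolic one, so $N \cong H$. Since $N$ is unimodular we have the orthogonal splitting $\nL_\Zp \oplus H^2 = N \perp N^\perp$; $w \in N^\perp$ by construction, so this restricts to $\langle w\rangle^\perp = H \perp \Lambda$ with $\Lambda := N^\perp \cap \langle w\rangle^\perp$. The construction is entirely explicit; the only step requiring even mild care is the coordinate normalization to $u_3 \in \Zp^*$, which is exactly the reason one needs two hyperbolic planes rather than one in the statement (a single $H$ would leave no "free" second plane from which to build $v, v'$).
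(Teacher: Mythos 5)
Your proof is correct and takes essentially the same approach as the paper: both construct an explicit unimodular copy of $H$ inside $\langle w\rangle^\perp$ (living inside the $H^2$ summand) and then invoke the splitting Lemma \ref{KITAOKATSSS}. Your version is a bit more careful about the degenerate cases the paper glosses over (reduction to primitive $w$, and the case $w_{H^2}=0$, where the paper's division by $w_0$ would not make sense), but the underlying construction --- normalize so that one $H^2$-coordinate is a unit, then write down two isotropic vectors with unit pairing orthogonal to $w$ --- is the same.
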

\begin{proof}
We may assume w.l.o.g. that $\nu_p(w_0)$ is minimal among the $\nu_p(w_i)$.

$<w>^\perp$ is described by the equation $\langle w_\nL, x_\nL \rangle + w_0x_1 + w_1x_0 + w_2x_3 + w_3 x_2 = 0$.
The map $(x_2, x_3) \mapsto (0_\nL, 0, -\frac{w_2 x_3 + w_3 x_2}{w_0}, x_2, x_3)$ therefore is an
isometric embedding of an hyperbolic plane into $<w>^\perp$. The assertion now follows from Lemma \ref{KITAOKATSSS}.
\end{proof}

Let $\kappa \in (\nL_\Zp^*/ \nL_\Zp) \otimes \nM^*_\Zp $ and $\{\alpha_i\}$ be a set of  
representatives of the orbits under $\SO'(\nL_\Zp)$ acting on $\nIsome(\nM, \nL)(\Qp) \cap \kappa$.

\begin{LEMMA}[stability of orbits]
\label{STABILITY}
Assume $\nM_\Zp$ has dimension $n$, $p\not=2$ and $\nL_\Zp$ splits $n$ hyperbolic planes.

Then $\{\alpha_i\}$ is a set of representatives of the $\SO'(\nL_\Zp \oplus H_\Zp^s)$-orbits 
in $\nIsome(\nM, \nL \perp H^s)(\Qp) \cap (\kappa \oplus (H_\Zp^s \otimes \nM_\Zp^*))$ for all $s$.
\end{LEMMA}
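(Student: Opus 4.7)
The plan is to verify two claims: (i) every element of $\nIsome(\nM, \nL \perp H^s)(\Qp) \cap (\kappa \oplus H_\Zp^s \otimes \nM_\Zp^*)$ is $\SO'(\nL_\Zp \oplus H_\Zp^s)$-conjugate to some $\alpha_i$; and (ii) two $\alpha_i, \alpha_j$ in distinct $\SO'(\nL_\Zp)$-orbits remain in distinct $\SO'(\nL_\Zp \oplus H_\Zp^s)$-orbits. A preliminary observation is that the coset condition is automatically preserved by any $\SO'$-action, because $H_\Zp$ is unimodular, so the discriminant group of $\nL_\Zp \oplus H_\Zp^s$ coincides canonically with $\nL_\Zp^*/\nL_\Zp$, and elements of the discriminant kernel act trivially on it by definition.

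For (i) I would induct on $n = \dim \nM$; the case $n = 0$ is vacuous. For the step, pick a primitive $m_1 \in \nM_\Zp$ and set $v := \beta(m_1) \in (\nL \perp H^s)_\Qp$, where $\beta$ is an arbitrary element of the set appearing in (i). Since $\nL_\Zp$ splits $n \geq 1$ hyperbolic planes by hypothesis and $v$ has nonzero norm $Q_\nM(m_1)$, Lemma \ref{LEMMA2HYPERBOLICPLANE} applied to a pair of hyperbolic summands inside $\nL_\Zp \oplus H_\Zp^s$ produces an element $g \in \SO(\nL_\Zp \oplus H_\Zp^s)$ rotating $v$ into $\nL_\Qp$; by arranging the rotation to be supported entirely within hyperbolic summands, $g$ automatically lies in $\SO'$. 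After replacing $\beta$ by $g\beta$, the orthogonal complement $v^\perp$ decomposes, again by Lemma \ref{LEMMA2HYPERBOLICPLANE}, in such a way that it still contains the $n-1$ hyperbolic planes required to apply the inductive hypothesis to $g\beta|_{m_1^\perp}$.

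For (ii), suppose $g \in \SO'(\nL_\Zp \oplus H_\Zp^s)$ sends $\alpha_i$ to $\alpha_j$ with both images in $\nL$. Then $g$ restricts to an isometry from $(\alpha_i(\nM)^\perp \cap \nL_\Zp) \oplus H_\Zp^s$ onto $(\alpha_j(\nM)^\perp \cap \nL_\Zp) \oplus H_\Zp^s$. I would invoke Witt-type cancellation at the lattice level for the unimodular summand $H_\Zp^s$ (available for $p \neq 2$), correcting $g$ by an element in the stabilizer of $\alpha_i$ lying inside $\SO(H_\Zp^s)$, to produce a $g_0 \in \SO(\nL_\Zp)$ with $g_0\alpha_i = \alpha_j$. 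The $\SO'$-condition transfers from $g$ to $g_0$ by the identification of discriminant groups noted at the outset, contradicting the assumed distinctness of the $\SO'(\nL_\Zp)$-orbits.

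The main obstacle is the explicit construction in step (i): producing an element of the discriminant kernel that moves an arbitrary vector of prescribed norm into the $\nL$-factor. This relies precisely on Lemma \ref{LEMMA2HYPERBOLICPLANE} together with the unimodularity of $H_\Zp$, which together guarantee that hyperbolic-plane rotations automatically contribute trivially to the discriminant action. The assumption $p \neq 2$ is essential both here and in the lattice-level Witt cancellation used in (ii); at $p = 2$ the integral decomposition theory of quadratic forms loses the clean form required for either argument.
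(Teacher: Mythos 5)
Your proposal has the right overall architecture (surjectivity and injectivity of the orbit correspondence, induction on $n$, and the correct preliminary observation about the discriminant group), but the central technical step in part (i) is not actually carried out, and the argument as written would not close the gap.

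The issue is the sentence claiming that Lemma \ref{LEMMA2HYPERBOLICPLANE} ``produces an element $g \in \SO(\nL_\Zp \oplus H_\Zp^s)$ rotating $v$ into $\nL_\Qp$.'' That lemma says nothing of the sort: it is a purely structural statement that $\langle w\rangle^\perp \cong H \perp \Lambda$, and it does not construct any isometry. The actual mechanism in the paper is a chain: Lemma \ref{LEMMA2HYPERBOLICPLANE} locates a hyperbolic plane $\Lambda \cong H_\Zp$ inside $\alpha(\nM_\Zp)^\perp$; Lemma \ref{KITAOKATSSS} then gives the orthogonal splitting $\nL_\Zp \oplus H_\Zp^s = \Lambda \perp \Lambda^\perp$; and Lemma \ref{KITAOKAC531} produces an isometry \emph{in $\SO'(\nL_\Zp \oplus H_\Zp^s)$} carrying $\Lambda$ to $H_\Zp^s$ and hence $\Lambda^\perp$ to $\nL_\Zp$. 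Since $\alpha(\nM) \subset \Lambda^\perp$, this isometry rotates $\alpha(\nM)$ into $\nL_\Qp$. The content of Lemma \ref{KITAOKAC531} --- that matching unimodular sub-decompositions can be aligned by an isometry in the discriminant kernel, which is exactly where $p\neq 2$ enters --- is the irreplaceable ingredient, and your heuristic ``by arranging the rotation to be supported entirely within hyperbolic summands, $g$ automatically lies in $\SO'$'' cannot stand in for it: the isometry that is actually needed genuinely mixes the $\nL$ and $H^s$ factors (it moves a vector that starts outside $\nL_\Qp$ into $\nL_\Qp$), so it is not supported in the hyperbolic summands at all, and its membership in $\SO'$ is a theorem rather than a bookkeeping observation. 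Your inductive step also glosses over the paper's explicit reduction to $s=1$ and the maneuvering needed to ensure $\alpha(\nN)^\perp \cap \nL_\Zp$ retains a hyperbolic plane before applying the $n=1$ case to the remaining one-dimensional piece.

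Part (ii) is closer to the paper. You correctly identify Witt-type cancellation over $\Zp$ (in essence Lemma \ref{KITAOKAC531}) as the key tool, but the correcting element is misdescribed: it does not lie in $\SO(H_\Zp^s)$. It is an isometry of the whole orthogonal complement $\alpha_i(\nM_\Zp)^\perp = H_\Zp^s \perp \alpha_i(\nM_\Zp)^{\perp \nL_\Zp}$ mapping $g(H_\Zp^s)$ back to $H_\Zp^s$ (and $g(H_\Zp^s)$ is not a subset of $H_\Zp^s$), chosen in $\SO'(\alpha_i(\nM_\Zp)^\perp)$ via Lemma \ref{KITAOKAC531} and then lifted to $\SO'(\nL_\Zp \oplus H_\Zp^s)$ fixing $\alpha_i(\nM_\Zp)$ pointwise by Lemma \ref{DISKRIMINANTENKERN}. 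That lifting lemma is a second ingredient your sketch does not invoke.
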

\begin{proof}
We begin by showing that, if $\alpha_i = g \alpha_j$ for some $g \in \SO'(\nL_\Zp \oplus H_\Zp^s)$, then
we have $\alpha_i = g' \alpha_j$ for some $g' \in \SO'(\nL_\Zp)$ as well.
We have
\[ \alpha_i(\nM_\Zp)^\perp = H_\Zp^s \perp \alpha_i(\nM_\Zp)^{\perp \nL_\Zp}. \]
Since the form is integral in $\nL_\Zp$, we have according to Lemma~\ref{KITAOKAT522}:
\[ \alpha_i(\nM_\Zp)^\perp = g(H_\Zp^s) \perp \Lambda_\Zp \]
(because $H_\Zp^s \perp \alpha_j(\nM_\Zp)$.)
Hence (Lemma~\ref{KITAOKAT522} --- here $p\not=2$ is used), there is an isometry
$\alpha_i(\nM_\Zp)^\perp$, which maps $g(H_\Zp^s)$ to $H_\Zp^s$ and lies in
$\SO'(\alpha_i(\nM_\Zp)^\perp)$. Hence it lifts to an isometry in $\SO'(\nL_\Zp \oplus H_\Zp^s)$,
which fixes $\nM_\Zp$ pointwise (Lemma~\ref{DISKRIMINANTENKERN}). Composition with $g$ yields the required $g'$.

Secondly, let an isometry $\alpha: \nM_\Zp \rightarrow \nL_\Zp \perp H_\Zp^s$ be given. We have to show that it is maped by an element in
$\SO'(\nL_\Zp \oplus H_\Zp^s)$ to any of the $\alpha_i$. It clearly suffices (induction on $s$) to consider the case $s=1$.

We proceed by induction on $n$ and first prove the case $n=1$:
By Lemma~\ref{LEMMA2HYPERBOLICPLANE} $\alpha_i(\nM_\Zp)^\perp$ ($\perp$ with respect to $\nL_\Zp \oplus H_\Zp$) splits an hyperbolic plane because
by assumption $\nL_\Zp$ splits already one.
Then apply Lemma~\ref{KITAOKAT522}. We get
\[ \alpha(\nM_\Zp)^\perp = \Lambda_\Zp \perp \Lambda'_\Zp, \]
with $\Lambda_\Zp \cong H_\Zp$.

In addition, we have (again Lemma~\ref{KITAOKAT522})
\[ \nL_\Zp \oplus H_\Zp = \Lambda_\Zp \perp \Lambda_\Zp^\perp, \]
hence (Lemma~\ref{KITAOKAC531}) $\Lambda_\Zp^\perp \cong \nL_\Zp$ and the image of $\alpha$ of course lies in
$\Lambda_\Qp^\perp$ because $\Lambda_\Qp \perp \alpha(\nM_\Qp)$. Now there is an isometry $g$, which maps
$\Lambda^\perp_\Zp$ to $\nL_\Zp$ and $\Lambda_\Zp$ to $H_\Zp$ 
(even in $\SO'(\nL_\Zp \oplus H_\Zp)$ --- Lemma~\ref{KITAOKAC531}).
The image of $g \circ \alpha$ then lies in $\nL_\Qp$ and the element of $\SO'(\nL_\Zp)$, which maps $g \circ \alpha$ to any
$\alpha_i$, lifts to an isometry $g' \in \SO'(\nL_\Zp \oplus H_\Zp)$. Hence $\alpha$ is conjugated to $\alpha_i$ under $\SO'(\nL_\Zp \oplus H_\Zp)$.

We now assume, that the statement has been proven for $\nM$ up to dimension $n-1$. 
We choose some splitting $\nM = \nN \oplus \nN^\perp$ with $\dim(\nN)=n-1$
and accordingly decomposition $\kappa = \kappa_\nN + \kappa_{\nN^\perp}$.
Let $\alpha: \nM_\Zp \rightarrow \nL_\Zp \perp H_\Zp^s$ be given. Induction hypothesis shows, that w.l.o.g. $\alpha(\nN) \subset \nL_\Qp$.
Since $\nL_\Zp$ splits an unrequired hyperbolic plane, we may even assume, that $\alpha(\nN)^\perp \cap \nL_\Zp$ splits a hyperbolic plane, too.
Hence we may apply the $n=1$ case to $\alpha(\nN)^\perp \oplus H_\Zp^s$ and $\nM^\perp$ and observe that the constructed isometries in this step all lift by Lemma~\ref{DISKRIMINANTENKERN}.
\end{proof}

We also get immediately an interpolated version of Kitaoka's formula:

\begin{SATZ}\label{KITAOKAINTERPOLATED}
Let $p \not=2$. Assume, that $\nL_\Zp$ splits $n$ hyperbolic planes. With the notation of Corollary \ref{KITAOKA}, we have
\begin{gather*} 
\mu_p(\nL_\Zp, \nM_\Zp, \kappa; s) = \\ 
\sum_{\SO'(\nL_\Zp)\alpha \subseteq \nIsome(\nM, \nL)(\Qp) \cap \kappa} \mu_p(\nL_\Zp, \nN_\Zp, \kappa_\nN, \alpha; s) 
\mu_p(\alpha(\nN)_\Zp^\perp, \nN_\Zp', \kappa_{\nN'}; s).
\end{gather*}
\end{SATZ}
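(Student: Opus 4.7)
The plan is to reduce the statement to Corollary \ref{KITAOKA} applied at the level of the enlarged lattice $\nL_\Zp \perp H_\Zp^s$, and then identify each factor in the resulting sum with its interpolated analogue via stability of orbits (Lemma \ref{STABILITY}).

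First I would rewrite the left hand side, using Definition \ref{DEFLAMBDAMU}, as
\[ \mu_p(\nL_\Zp, \nM_\Zp, \kappa; s) = \mu_p(\nL_\Zp \perp H_\Zp^s,\ \nM_\Zp,\ \kappa \oplus (H_\Zp^s \otimes \nM_\Zp^*)). \]
Since $\nL_\Zp$ splits $n$ hyperbolic planes by hypothesis, so does $\nL_\Zp \perp H_\Zp^s$. Then I would apply Corollary \ref{KITAOKA} with the decomposition $\nM_\Zp = \nN_\Zp \oplus \nN_\Zp'$ and with the coset decomposed accordingly as
\[ \kappa \oplus (H_\Zp^s \otimes \nM_\Zp^*) = \bigl(\kappa_\nN \oplus (H_\Zp^s \otimes \nN_\Zp^*)\bigr) \oplus \bigl(\kappa_{\nN'} \oplus (H_\Zp^s \otimes (\nN_\Zp')^*)\bigr). \]
The result is a sum of products $\mu_p(\nL \perp H^s, \nN, \kappa_\nN \oplus \cdots; \beta)\cdot \mu_p(\beta(\nN)^\perp, \nN_\Zp', \kappa_{\nN'} \oplus \cdots)$ indexed by $\SO'(\nL_\Zp \perp H_\Zp^s)$-orbits $\beta$ in $\nIsome(\nN, \nL \perp H^s)(\Qp) \cap (\kappa_\nN \oplus H_\Zp^s \otimes \nN_\Zp^*)$ for which the corresponding extendability condition is satisfied.

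Next I would invoke Lemma \ref{STABILITY}: since $p \neq 2$ and $\nL_\Zp$ splits $n \geq \dim \nN_\Zp$ hyperbolic planes, the $\SO'(\nL_\Zp \perp H_\Zp^s)$-orbits appearing above are in canonical bijection with the $\SO'(\nL_\Zp)$-orbits of $\nIsome(\nN, \nL)(\Qp) \cap \kappa_\nN$, and representatives can be taken to lie in the image of the inclusion $\nIsome(\nN, \nL) \hookrightarrow \nIsome(\nN, \nL \perp H^s)$. For such a representative $\alpha$ the orthogonal complement in $\nL \perp H^s$ decomposes as $\alpha(\nN)^\perp = \alpha(\nN)^\perp_{\nL_\Zp} \perp H_\Zp^s$. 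Consequently the second factor is, by Definition \ref{DEFLAMBDAMU},
\[ \mu_p(\alpha(\nN)^\perp_{\nL_\Zp} \perp H_\Zp^s,\ \nN_\Zp',\ \kappa_{\nN'} \oplus (H_\Zp^s \otimes (\nN_\Zp')^*)) = \mu_p(\alpha(\nN)^\perp_\Zp,\ \nN_\Zp',\ \kappa_{\nN'};\ s), \]
and the first factor is by the same definition the interpolated orbit volume $\mu_p(\nL_\Zp, \nN_\Zp, \kappa_\nN, \alpha; s)$. Reassembling produces the claimed right hand side.

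The main obstacle is verifying that Lemma \ref{STABILITY} genuinely applies to $\nN$ rather than to $\nM$ itself: this needs $\nL_\Zp$ to split at least $\dim \nN$ hyperbolic planes, which is immediate since $\dim \nN \leq n$. A minor bookkeeping point is that the extendability condition (the intersection $\kappa_{\nN'} \cap \alpha(\nN)_\Qp^\perp \otimes (\nN_\Qp')^*$ being nonempty) is intrinsic to the $\SO'(\nL_\Zp)$-orbit of $\alpha$ and is preserved under the addition of hyperbolic summands, so it transports automatically through the bijection; equivalently, one may simply observe that the second $\mu_p$-factor vanishes when the condition fails, so the restriction is built into the formula.
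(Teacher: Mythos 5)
Your proof is correct and matches the approach the paper intends: the paper states the result ``immediately'' after Lemma~\ref{STABILITY} without an explicit proof, clearly signalling exactly this route of applying Corollary~\ref{KITAOKA} to $\nL_\Zp \perp H_\Zp^s$ and then using stability of orbits to transport the sum back to $\SO'(\nL_\Zp)$-orbits. Your observation that Lemma~\ref{STABILITY} applies to $\nN$ since $\dim\nN \le n$, and that the extendability condition is unchanged because the intersection with $(\alpha(\nN)_{\Qp}^\perp \perp H_\Qp^s)\otimes(\nN'_\Qp)^*$ reduces to the corresponding intersection at $s=0$, closes the two points that could have been overlooked.
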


The quantities $\mu_p(\nL_\Zp, \nN_\Zp, \kappa_\nN, \alpha; s)$ are by Theorem \ref{LEMMAVOLUMEFIBRATION} equal to
\[ \frac{\lambda_p(\nL_\Zp; s)}{\lambda_p(\alpha(\nN)_\Zp^\perp; s)}, \]
hence they are polynomials in $p^{-s}$ (cf. \ref{MUPOLYNOMIAL}) and furthermore, the equation can be rewritten in the more symmetric form: 
\begin{gather*}
\lambda_p(\nL_\Zp; s)^{-1} \mu_p(\nL_\Zp, \nM_\Zp, \kappa; s) = \\
\sum_{\SO'(\nL_\Zp)\alpha \subseteq \nIsome(\nM, \nL)(\Qp) \cap \kappa} \lambda_p(\alpha(\nN)_\Zp^\perp; s)^{-1} \mu_p(\alpha(\nN)_\Zp^\perp, \nN_\Zp', \kappa_{\nN'}; s).
\end{gather*}

\section{The local orbit equation --- Archimedian case}

\begin{DEF}\label{DEFLAMBDAMUARCHIMEDISCH}
We will define factors at $\infty$  analogously to $\lambda_p$ and $\mu_p$ (\ref{DEFLAMBDAMU}).
\begin{eqnarray*}
 \lambda_\infty(\nL; s) &:=& \Gamma_{m-1,m}(s), \\
 \mu_\infty(\nL, \nM; s) &:=& \Gamma_{n,m}(s),
\end{eqnarray*}
where, as usual, $n=\dim(\nM)$ and $m=\dim(\nL)$.
\end{DEF}

With this notation, we have also a (rather trivial) Archimedian analogue of the orbit equation (with only one orbit):
\begin{SATZ}\label{BAHNENGLEICHUNGARCHIMEDISCH}
If $Q_\nM$ is positive definite, we have
\[ \mu_\infty(\nL_\Z, \nM_\Z, \kappa; s) \cdot \lambda_\infty(\nL_\Z; s)^{-1} = \lambda_\infty(\alpha(\nL)^\perp; s)^{-1}. \]
\end{SATZ}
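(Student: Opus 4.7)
The plan is to reduce the claim to a telescoping identity among the higher-dimensional gamma factors $\Gamma_{n,m}(s)$ from Definition \ref{DEFGAMMA}.

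First I interpret $\alpha(\nL)^\perp$ as $\alpha(\nM)^\perp \subset \nL$, in keeping with the convention $\alpha^\perp = \im(\alpha)^\perp$ from \ref{DEFI} and with the non-Archimedean orbit equation (Theorem \ref{BAHNENGLEICHUNG}); this space has dimension $m' = m - n$, where $m = \dim(\nL)$ and $n = \dim(\nM)$. Substituting the definitions of $\lambda_\infty$ and $\mu_\infty$ from \ref{DEFLAMBDAMUARCHIMEDISCH}, the claim becomes
\[ \Gamma_{m-1,m}(s) \;=\; \Gamma_{n,m}(s) \cdot \Gamma_{m-n-1,\, m-n}(s). \]

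The second step is to verify this identity directly from the product representation
\[ \Gamma_{n,m}(s) \;=\; \prod_{k=m-n+1}^{m} 2\, \frac{\pi^{\frac{1}{2}(s+k)}}{\Gamma(\frac{1}{2}(s+k))}. \]
Reading off the index ranges: $\Gamma_{m-1,m}(s)$ runs over $k \in \{2,\ldots,m\}$, while $\Gamma_{m-n-1,\,m-n}(s)$ runs over $k \in \{2,\ldots, m-n\}$, and $\Gamma_{n,m}(s)$ runs over $k \in \{m-n+1,\ldots,m\}$. The disjoint decomposition $\{2,\ldots,m\} = \{2,\ldots,m-n\} \sqcup \{m-n+1,\ldots,m\}$ immediately factors the left-hand product into the two claimed pieces. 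In the degenerate case $m = n+1$, the factor $\Gamma_{m-n-1,\,m-n}(s) = \Gamma_{0,1}(s)$ is the empty product $1$ and both sides visibly agree.

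Finally, I would record the conceptual content of the identity. In the positive definite case, Witt's theorem together with the compactness of $\SO(\nL_\R)$ ensures that $\SO(\nL_\R)$ acts transitively on $\nIsome(\nM,\nL)(\R)$ with stabilizer $\SO(\alpha(\nM)^\perp_\R)$, so there is exactly one orbit -- which is why the right-hand side carries no sum, in contrast to Theorem \ref{BAHNENGLEICHUNG}. At $s = 0$, our identity then specializes, via Lemma \ref{REPDENSCLASSICAL}(2), to the standard Stiefel-manifold volume relation
\[ \vol(\nIsome(\nM,\nL)(\R)) \;=\; \vol(\SO(\nL)(\R))\,\big/\,\vol(\SO(\alpha(\nM)^\perp)(\R)). \]
There is no substantive obstacle: once the typo/convention for $\alpha(\cdot)^\perp$ is fixed, the proof is a single line of manipulation of the defining product.
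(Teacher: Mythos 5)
Your proof is correct and matches the paper's implicit argument: the theorem is stated without a written-out proof (the paper calls it "rather trivial" and notes it "depends only on the respective dimensions"), and what you have done is precisely unpack that remark into the telescoping identity $\Gamma_{m-1,m}(s)=\Gamma_{n,m}(s)\,\Gamma_{m-n-1,m-n}(s)$ among the product expressions in Definition \ref{DEFGAMMA}. You are also right that $\alpha(\nL)^\perp$ is a typo for $\alpha(\nM)^\perp$, consistent with \ref{DEFI} and Theorem \ref{BAHNENGLEICHUNG}, and the closing remark linking the $s=0$ value to the Stiefel-manifold volume relation via Lemma \ref{REPDENSCLASSICAL} is the correct conceptual reading.
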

Here $\alpha$ is any real embedding $\nM_\R \rightarrow \nL_\R$, with $\alpha^!Q_\nL = Q_\nM$.
The above depends only on the respective dimensions and is formulated in dependence of $\nL$ and $\nM$ only 
in order to have the same shape than the non-Archimedian orbit equation (Theorem~\ref{BAHNENGLEICHUNG}).

Furthermore in the positive definite case, we have, analogously to the non-Archimendian case:
\[ \vol(\nIsome(\nM, \nL)_\R) = \Gamma_{n,m}(0)  \qquad n \ge m \]
and in particular
\[ \vol(\SO(\nM_\R)) = \Gamma_{m-1,m}(0). \]
(for the canonical volumes~\ref{DEFCANVOL}), see Lemma \ref{REPDENSCLASSICAL}.

\section{Explicit formul\ae, $n=1$ case}\label{SECTIONN1}

In the case $n=1$, the representation densities have been computed explicitely by Yang \cite{Yang1}:

\begin{PAR}
Assume $p\not=2$. Consider
\[ \nL_\Zp = <\varepsilon_1 p^{l_1}, \dots, \varepsilon_m p^{l_m}>, \] 
where $\varepsilon_i \in \Z_p^*$ and
$l_i \in \Z_{\ge 0}, l_1 \le \cdots \le l_m$. Assume, that $p^{-1}Q_\nL$ is not integral, i.e. we have $l_1=0$.

Denote:
\begin{eqnarray*}
 L(k, 1)  &:=& \{1 \le i \le m \where l_i-k <0 \text{ is odd}\} \\
 l(k, 1)  &:=& \#L(k,1) \\
 d(k)     &:=& k + \frac{1}{2}\sum_{l_i < k} (l_i-k) \\
 v(k)     &:=& \left(\frac{-1}{p}\right)^{ \lfloor \frac{l(k,1)}{2} \rfloor} \prod_{i \in L(k,1)} \left(\frac{\varepsilon_i}{p}\right).
\end{eqnarray*}
\end{PAR}

\begin{SATZ}[{\cite[Theorem 3.1]{Yang1}}]
\label{YANG}
With this notation, we have
\[
 \beta_p(\nL_\Zp, <\alpha p^a>, \nL_\Zp; s) = 1 + R(\alpha p^a; p^{-s}), \]
where $\alpha \in \Z_p^*$, $a \in \Z_{\ge 0}$ and
\begin{eqnarray*}
 R(\alpha p^a; X) &=& (1-p^{-1})\sum_{\substack{0<k\le a \\ l(k,1) \text{ is even}}} v(k)p^{d(k)}X^k \\
 &+& v(a+1)p^{d(a+1)}X^{a+1}\begin{cases}
  -p^{-1} & \text{if } l(a+1, 1) \text{ is even,} \\
  \left(\frac{\alpha}{p}\right)p^{-\frac{1}{2}} & \text{if } l(a+1, 1) \text{ is odd.}
\end{cases}
\end{eqnarray*}
We have:
\[
 \beta_p(\nL_\Zp, <0>, \nL_\Zp; s) = 1 + R(0; p^{-s}), \]
where
\[ R(0; X) = (1-p^{-1})\sum_{\substack{k>0 \\ l(k,1) \text{ is even}}} v(k)p^{d(k)}X^k. \]
\end{SATZ}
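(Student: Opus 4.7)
The plan is to apply the orbit equation (Theorem~\ref{BAHNENGLEICHUNG}) in the case $n=1$, $\nM_\Zp = \langle \alpha p^a \rangle$, $\kappa = \nL_\Zp \otimes \nM_\Zp^*$, and then explicitly enumerate the $\SO'(\nL_\Zp)$-orbits of vectors $w \in \nL_\Qp$ satisfying $Q_\nL(w) = \alpha p^a$. After dividing through by the explicit formulas for $\lambda_p(\cdot;s)$ from Theorem~\ref{EXPLIZIT} and converting $\mu_p$ back to the classical $\beta_p$ via Definition~\ref{DEFLAMBDAMU}, the resulting sum should collapse to $1 + R(\alpha p^a; p^{-s})$.

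First, I would parametrise the orbits by the depth $k \ge 0$ of $w$, i.e.\ the least integer such that $p^k w \in \nL_\Zp$. By Witt's theorem applied over $\F_p$, together with the control on the discriminant kernel in Lemma~\ref{DISKRIMINANTENKERN}, the $\SO'(\nL_\Zp)$-orbits at fixed depth $k$ correspond to solutions of a single quadratic equation in the reduced $\F_p$-space indexed by $L(k,1)$. A short case analysis shows that at each bulk level $1 \le k \le a$ this equation admits solutions exactly when $l(k,1)$ is even, and in that case yields a unique orbit; at the boundary level $k = a+1$ both parities of $l(a+1,1)$ contribute, but through different mechanisms (isotropic versus anisotropic residue piece).

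Second, for each orbit I would identify $w^\perp_\Zp := \langle w \rangle_\Qp^\perp \cap \nL_\Zp$ up to isomorphism by a direct diagonalisation. The Jordan blocks of $w^\perp_\Zp$ agree with those of $\nL_\Zp$ outside $L(k,1)$, while the blocks inside $L(k,1)$ combine into a sublattice whose discriminant, up to a $p$-adic unit square, has quadratic character $v(k)$ and whose $p$-valuation is shifted by exactly $d(k)$ relative to $\nL_\Zp$. Plugging into Theorem~\ref{EXPLIZIT} yields
\[ \frac{\lambda_p(\nL_\Zp; s)}{\lambda_p(w^\perp_\Zp; s)} = (1-p^{-1}) \, v(k) \, p^{d(k)} \, (p^{-s})^k \]
in the bulk even case, producing precisely the main sum in $R(\alpha p^a; X)$; the analogous calculation at $k = a+1$ supplies the boundary term, the factor $(\alpha/p) p^{-1/2}$ in the odd subcase arising as the Weil index of the anisotropic residue via Theorem~\ref{SATZWEILPROP6}. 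The $\nM_\Zp = \langle 0 \rangle$ formula follows from the same argument with the boundary layer absent, the sum then running over all $k > 0$ with $l(k,1)$ even.

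The main obstacle is the diagonalisation step: one must carefully track how the parity of $l(k,1)$ and the residues $\varepsilon_i \bmod p$ combine into the discriminant character $v(k)$, and verify that the apparent freedom in choosing a representative $w$ within its $\SO'(\nL_\Zp)$-orbit collapses to the single discrete invariant $(k, l(k,1) \bmod 2)$. Once the identification of $w^\perp_\Zp$ as a Jordan-decomposed lattice is in place, the rest is bookkeeping with the explicit formula of Theorem~\ref{EXPLIZIT} and the definitions of $d(k)$ and $v(k)$.
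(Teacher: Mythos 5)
Your proposal suffers from a fundamental circularity. Theorem~\ref{YANG} is not proved in this paper at all --- it is a cited result from Yang's paper, quoted verbatim. More importantly, the ingredient you want to rely on, namely Theorem~\ref{EXPLIZIT}, is itself proved \emph{using} Theorem~\ref{YANG}: the very first line of the proof of Theorem~\ref{EXPLIZIT}(1) reads ``Theorem~\ref{YANG} yields $\beta_p(H^{s}, 1) = 1 - p^{-s}$ and $\beta_p(H^{s-1}\perp<-1>, 1) = 1 + p^{-s+1}$,'' and parts 3--6 of Theorem~\ref{EXPLIZIT} are built directly on Yang's evaluation of $\beta_p(\nL_\Zp', \varepsilon'; s)$. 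So any derivation that inputs $\lambda_p$ via Theorem~\ref{EXPLIZIT} and outputs Yang's formula is running in a logical circle. The logical direction in the paper is precisely the opposite one: Section~\ref{SIMPLEILLUSTRATION} takes Yang's formula and the explicit $\lambda_p$'s as given and \emph{verifies} that the orbit equation then reduces to a checkable identity of rational functions in $X = p^{-s}$; it does not derive Yang's formula from the orbit equation.

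There are also two secondary gaps. First, the orbit equation~\ref{BAHNENGLEICHUNG} is only stated under the hypothesis that $\nL_\Zp$ splits $n=1$ hyperbolic plane (needed for the stability of orbits, Lemma~\ref{STABILITY}), whereas Yang's formula holds for arbitrary $\nL_\Zp$ with $p^{-1}Q_\nL$ non-integral, including anisotropic lattices with no hyperbolic summand. Your argument therefore cannot cover all the cases Theorem~\ref{YANG} claims. Second, your description of the $\SO'(\nL_\Zp)$-orbit set --- one orbit at each depth $1 \le k \le a$ with $l(k,1)$ even, plus a boundary layer at $k=a+1$ --- has no support in the paper. Lemma~\ref{LEMMA1ORBIT} gives an orbit count only for \emph{unimodular} $\nL_\Zp$, and the identification of $w^\perp_\Zp$ with a specific Jordan-decomposed lattice in the non-unimodular case is exactly the hard combinatorics Yang's proof handles by a direct Fourier/Hensel computation (which is what his Theorem~3.1 actually does). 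To give an independent proof you would have to produce this orbit classification and the resulting $\lambda_p(w^\perp_\Zp;s)$ values from scratch, without invoking Theorem~\ref{EXPLIZIT}.
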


Furthermore, still for $n=1$, the representation densities have the following interpretations:

\begin{LEMMA}\label{REPDENS}
Let $s \in \Z_{\ge0}$ and let $\kappa$ be a coset in $\nL^*_\Zp/\nL_\Zp$.

We have the following relation to representation numbers:
\begin{equation}\label{SELTSAMEFORMEL}
\beta_p(\nL_\Zp, <q>, \kappa; s) = \#\Omega_{\kappa, q}(w) p^{w(1-m-s)}+(1-p^{-s})\sum_{j=0}^{w-1}\#\Omega_{\kappa, q}(j)p^{j(1-m-s)}
\end{equation}
Here $\Omega_{\kappa, q}(j)=\{v \in \nL_{\Z/p^j\Z} \where Q_\nM(v+\kappa) \equiv q \enspace (p^j) \}$ and $w$ is a sufficiently large integer.
(Explicitely: $w\ge 1+2\nu_p(2q \ord(\kappa) )$ --- the formula then does not depend on $w$.)

This can be written as follows $(\re s > 1)$:
\begin{equation}\label{SELTSAMEFORMEL2}
 \sum_l \frac{\#\Omega_{\kappa, q}(l)}{p^{l(m-1+s)}} = \frac{\beta_p(\nL_\Zp, <q>, \kappa; s)}{1-p^{-s}}. 
\end{equation}

If $m \ge 2$,
\begin{equation}\label{SELTSAMEFORMEL3}
  \int_{\kappa} |Q_\nL(v)-Q|^s \dd v = p^s + \beta(\nL_\Zp,<Q>,\kappa; s+1)\frac{1-p^{s}}{1-p^{-s-1}},  
\end{equation}
where $\dd v$ is the translation invariant measure with $\vol(\nL_\Zp)=1$.
\end{LEMMA}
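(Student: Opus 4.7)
The plan is to deduce all three identities from the representation-count description of $\beta_p$ in Lemma \ref{REPDENSCLASSICAL}(3); of these, (\ref{SELTSAMEFORMEL}) is the substantive claim, and (\ref{SELTSAMEFORMEL2}) and (\ref{SELTSAMEFORMEL3}) will follow by elementary rearrangement. For (\ref{SELTSAMEFORMEL}), I first take $s \in \Z_{\ge 0}$, so that by Definition \ref{DEFLAMBDAMU} the quantity $\beta_p(\nL_\Zp, <q>, \kappa; s)$ equals the ordinary representation density of $<q>$ by $\nL_\Zp \perp H_\Zp^s$ in the coset $\kappa \oplus (H_\Zp^s \otimes <q>^*)$. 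Lemma \ref{REPDENSCLASSICAL}(3) expresses this as a limit over $l$ of counts of pairs $(v,h) \in \nL_\Zp/p^l \times H_\Zp^s/p^l$ with $v \in \kappa$ and $Q_\nL(v) + Q_H(h) \equiv q \pmod{p^l}$. Partitioning by the $v$-component reduces the problem to the inner count $N_s(c, l) := \#\{h \in H_\Zp^s/p^l : Q_H(h) \equiv c\}$, which depends only on $\min(\nu_p(c)+1, l)$. An induction on $s$ via the fibration $H^s \to H^{s-1}$ combined with Hensel lifting yields a closed form: $N_s(c, l) = p^{l(2s-1)}$ times a polynomial in $p^{-s}$ whose coefficients are linear in $\min(\nu_p(c)+1, l)$, with an extra boundary term when $\nu_p(c) \ge l$. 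Inserting this, grouping $v$'s by $\nu_p(q - Q_\nL(v))$ (which is controlled by the $\#\Omega_{\kappa, q}(j)$'s), and normalizing by $p^{l(1-m-2s)}$ reproduces the right-hand side of (\ref{SELTSAMEFORMEL}) as $l \to \infty$. Since both sides are polynomials in $p^{-s}$ (the left by Remark \ref{MUPOLYNOMIAL}, the right obviously), the identity extends from $s \in \Z_{\ge 0}$ to all $s \in \C$.

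Formula (\ref{SELTSAMEFORMEL2}) is purely algebraic from (\ref{SELTSAMEFORMEL}): Hensel's lemma gives the stability $\#\Omega_{\kappa, q}(j) = p^{(j-w)(m-1)} \#\Omega_{\kappa, q}(w)$ for all $j \ge w$, so the tail of $\sum_{j \ge 0} \#\Omega_{\kappa, q}(j)/p^{j(m-1+s)}$ beyond $j = w$ sums geometrically in $p^{-s}$ to $\#\Omega_{\kappa, q}(w) p^{w(1-m-s)}/(1-p^{-s})$; combined with the finite initial part and multiplied by $1 - p^{-s}$, this matches (\ref{SELTSAMEFORMEL}) exactly. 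For (\ref{SELTSAMEFORMEL3}), set $V(j) := \vol(\{v \in \kappa : \nu_p(Q_\nL(v) - Q) \ge j\})$; with the normalization $\vol(\nL_\Zp) = 1$ one has $V(0) = \vol(\kappa) = 1$ and $V(j) = p^{-jm} \#\Omega_{\kappa, Q}(j)$. Decomposition by valuation gives $\int_\kappa |Q_\nL(v) - Q|_p^s\, \dd v = \sum_{j\ge 0} p^{-js}(V(j) - V(j+1))$, which after Abel summation becomes $1 + (1-p^s) \sum_{j \ge 1} \#\Omega_{\kappa, Q}(j)/p^{j(m+s)}$. Substituting (\ref{SELTSAMEFORMEL2}) with $s$ replaced by $s+1$ (and using $\#\Omega_{\kappa, Q}(0) = 1$) then yields (\ref{SELTSAMEFORMEL3}) after elementary simplification.

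The main obstacle is the hyperbolic-plane representation count $N_s(c, l)$, whose bookkeeping becomes delicate near the boundary $\nu_p(c) \approx l$ due to the singular fibre $\{Q_H = 0\}$ through the origin. A cleaner alternative avoids the explicit count by working with the canonical fibre volume $G_s(c)$ of $\{Q_H = c\} \subset H_\Zp^s$ furnished by Theorem \ref{LEMMAVOLUMEFIBRATION}: one then has $\mu_p(\nL_\Zp, <q>, \kappa; s) = \int_\kappa G_s(q - Q_\nL(v))\, \dd v$, and the recursion $G_s(c) = \int_H G_{s-1}(c - Q_H(u))\, \dd \mu_H(u)$ closes up quickly and delivers (\ref{SELTSAMEFORMEL}) in the same form.
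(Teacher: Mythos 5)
Your derivations of (\ref{SELTSAMEFORMEL2}) and (\ref{SELTSAMEFORMEL3}) from (\ref{SELTSAMEFORMEL}) are correct and coincide with the paper's: the Hensel stability $\#\Omega_{\kappa,q}(j)=p^{(j-w)(m-1)}\#\Omega_{\kappa,q}(w)$ for $j\ge w$ and the geometric summation of the tail is exactly what makes the paper's ``let $w\to\infty$'' argument work, and your $V(j)=p^{-jm}\#\Omega_{\kappa,Q}(j)$ computation for (\ref{SELTSAMEFORMEL3}) matches the paper's Abel-summation step.

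For (\ref{SELTSAMEFORMEL}) itself, however, there is a genuine gap. Your strategy --- partition $\nIsome(<q>,\nL\oplus H^s)(\Z/p^l)$ by the $\nL$-component $v$ and insert a closed form for the inner count $N_s(c,l)=\#\{h\in H^s_{\Z/p^l}:Q_H(h)\equiv c\}$ --- requires that closed form, and you only assert its shape (``$p^{l(2s-1)}$ times a polynomial in $p^{-s}$ whose coefficients are linear in $\min(\nu_p(c)+1,l)$''), acknowledging yourself that the bookkeeping near $\nu_p(c)\approx l$ is ``the main obstacle.'' That claimed structure is in fact not literally right as stated (for fixed $s$, $N_s(c,l)$ is a single integer, not a polynomial in $p^{-s}$), and even granting the intended reading you have not verified it; the alternative route via the fibre volumes $G_s(c)$ merely repackages the same difficulty and, in addition, runs into a degenerate base case at $s=0$ (where the fibre is a point and $G_0$ is a delta, not an integral). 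So the central identity is left unproven in your proposal.

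The paper avoids the need for any closed form for $N_s$: it observes that (\ref{SELTSAMEFORMEL}) at $s=0$ is just the stabilization $\beta_p(\nL,<q>,\kappa;0)=\#\Omega_{\kappa,q}(w)p^{w(1-m)}$ (Hensel), and then performs a one-step induction by applying the same formula to $\nL\oplus H$ at parameter $s$, which by definition has left side $\beta_p(\nL,<q>,\kappa;s+1)$. The only input needed to show that the right side also becomes the $(s+1)$-form is the recursion
\[ \#\Omega_{\kappa,q}(\nL\oplus H,r) = \sum_{\nu=0}^{r-1} p^{(r-\nu)m}(p^r-p^{r-1})\,\#\Omega_{\kappa,q}(\nL,\nu) + \#\Omega_{\kappa,q}(\nL,r)\,p^r, \]
which follows from the explicit count $\#\Omega_n(H,l)$ for a \emph{single} hyperbolic plane together with a stratification by $\nu_p(n)$. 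In other words: establish the recursion once for one $H$, then induct on the whole identity, rather than solving for $N_s$ globally. If you replace your ``closed form for $N_s$'' step with this single-$H$ recursion and the $s\to s+1$ induction, your proof closes up.
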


\begin{proof}
Formula (\ref{SELTSAMEFORMEL}) is obviously true for $s=0$. Under the substitution $\nL \leadsto \nL \oplus H$ the left hand side becomes
$\beta_p(\nL_\Zp, <q>, \kappa; s+1)$ and the right hand side becomes the same expression for $s+1$, if we use the relation:
\begin{equation}\label{relation}
\# \Omega_{\kappa,q}(\nL \oplus H, r) = \sum_{\nu=0}^{r-1} p^{(r-\nu)m} (p^r-p^{r-1}) \#\Omega_{\kappa,q}(\nL, \nu) + \#\Omega_{\kappa,q}(\nL, r)p^r.
\end{equation}

Proof of the relation: An explicit calculation shows:
\begin{equation}
\#\Omega_n(H, l) = \begin{cases} (\nu_p(n)+1)(p^l - p^{l-1}) & \nu_p(n)<l, \\ l(p^l - p^{l-1})+ p^l & \nu_p(n	)\ge l. \end{cases}
\end{equation}
Hence:
\begin{eqnarray*}
&& \#\Omega_{\kappa, q}(\nL+H, l) \\
&=& \sum_{n \in \Z/p^l\Z} \#\Omega_{\kappa, q-n}(\nL, l) \#\Omega_{n}(H, l) \\
&=& \sum_{\nu=0}^l \#\Omega_{p^\nu}(H, l) \sum_{n \in \Z/p^l\Z \atop \nu_p(n)=\nu} \#\Omega_{\kappa, q-n}(\nL, l) \\
&=& \sum_{\nu=0}^l \#\Omega_{p^\nu}(H, l) \left( \sum_{n \in \Z/p^l\Z \atop \nu_p(n)\ge\nu} \#\Omega_{\kappa, q-n}(\nL, l) - \sum_{n \in \Z/p^l\Z \atop \nu_p(n)\ge \nu+1} \#\Omega_{\kappa, q-n}(\nL, l) \right)\\
&=& \sum_{\nu=0}^l \#\Omega_{p^\nu}(H, l) \left( p^{(l-\nu)m} \#\Omega_{\kappa, q}(\nL, \nu) - p^{(l-\nu-1)m} \#\Omega_{\kappa, q}(\nL, \nu+1) \right).
\end{eqnarray*}
From this the relation (\ref{relation}) follows. (\ref{SELTSAMEFORMEL2}) is obtained by letting $w \rightarrow \infty$ since
(\ref{SELTSAMEFORMEL}) does not depend on $w$.

For formula (\ref{SELTSAMEFORMEL3}) observe that
\begin{eqnarray*}
\int_{\kappa} |Q_\nL(v)-q|^s \dd v &=& \sum_{i=0}^\infty \left( \vol\kappa \cap \{|Q_\nL-Q|\le\frac{1}{p^i}\} - \vol \kappa \cap \{|Q_\nL-Q|\le\frac{1}{p^{i+1}}\}\right) \frac{1}{p^{is}} \\
&=& 1 + \sum_{i=1}^\infty \vol \kappa \cap \{|Q_\nL-Q|\le\frac{1}{p^i}\}(1-p^s)\frac{1}{p^{is}} \\
&=& 1 + \sum_{i=1}^\infty \frac{\#\Omega_{\kappa,q}(i)}{p^{i(s+n)}}(1-p^{s}) \\
&=& 1 - (1-p^s) + \sum_{i=0}^\infty \frac{\#\Omega_{\kappa,q}(i)}{p^{i(s+n)}}(1-p^{s}).
\end{eqnarray*}
From this (\ref{SELTSAMEFORMEL3}) follows using identity (\ref{SELTSAMEFORMEL2}).
\end{proof}

\begin{PAR}
We will investigate the zeta function representation given in the theorem a bit further. It is convenient to write
\[ \beta(\nL_\Zp,<0>; s) = 1 + (1-p^{-1})\delta(p^{-s}),   \]
where $\delta$ is a rational function, which has, according to Theorem~\ref{YANG}, the expansion:
\[ \delta(X) = \sum_{k>1 \atop l(k,1)\equiv 0\enspace(2)} \nu(k)p^{d(k)}X^k \]
(with the local notation from Theorem~\ref{YANG}). Here $\delta(0)=0$.

Therefore:
\begin{eqnarray*}
  E := \vol\{x \in \nL_\Zp | Q_\nL(v) \in \Z_p^* \} &=& \lim_{s \rightarrow \infty}
  \int_{\nL_\Zp} |Q_\nL(v)|^s \dd v \\
  &=& \lim_{X \rightarrow 0} (1 - \frac{1+(1-p^{-1})\delta(p^{-1}X)}{1-p^{-1}X}) \\
  &=& (1-p^{-1})(1-\delta'(0)p^{-1}).
\end{eqnarray*}
with
\[ \delta'(0) = \begin{cases} \nu(1)p^{d(1)} & l(1,1)\equiv 0 \enspace(2), \\ 0 & l(1,1)\equiv 1 \enspace(2). \end{cases} \]
\end{PAR}

\begin{DEF}\label{localzeta}
We define the {\bf normalized local zeta function} associated with $\nL$ by
\[ \zeta_p(\nL_\Zp; s) := \frac{1}{E} \int_{\nL_\Zp} |Q_\nL(v)|^{s-1} \dd v. \]
\end{DEF}

For two dimensional lattices, this coincides for example with the usual zeta function of the associated order
in the associated quadratic field ($\frac{\dd v}{E|Q_\nL|}$ is the multiplicatively invariant measure 
for which $(\OOO \tensor \Zp)^*$ has volume 1).

\begin{PAR}\label{ZETAEXPLIZIT}
Here, we explicitly compute the zeta function for an arbitrary two dimensional lattice. 
This will be used in section \ref{MODULARCURVE}.

Let $\nL_\Zp$, $p \not=2$ be a lattice with $Q_\nL \in \Sym^2(\nL_\Zp^*)$, such that $p^{-1}Q_\nL$ is not integral.
The zeta function of such a 2 dimensional lattice depends only on the discriminant because it is invariant multiplication of the form
by a scalar $\in \Z_p^*$. Hence we may assume w.l.o.g. $\nL_\Zp = \Z_p^2$ with  $Q_\nL: x \mapsto x_1)^2 + \varepsilon p^l (x_2)^2$.

With the notation of Yang (Theorem~\ref{YANG}), we have:

\begin{eqnarray*}
L(k;1) &=& \begin{matrix}
l \text{ even} & & l \text{ odd} \\
\{\}    & 0<k\le l \text{ even   } & \{\} \\
\{1\}   & 0<k\le l \text{ odd } & \{1\} \\
\{\}    & l<k   \text{ even   } & \{2\} \\
\{1,2\} & l<k   \text{ odd } & \{1\}
\end{matrix} \\
d(k) &=& \begin{cases} \frac{1}{2}k & k \le l, \\ \frac{1}{2}l & l < k, \end{cases} \\
\nu(k) &=& \begin{cases} (\frac{-\varepsilon}{p})^k & l<k, l \text{ even}, \\
1 & k \le l \text{ or } l \text{ odd }. \end{cases}
\end{eqnarray*}
\end{PAR}

Assume first $l=0$, then (as expected):
\begin{eqnarray*}
 \delta(X) &=& \sum_{k > 1} (\frac{-\varepsilon}{p})^k X^k = \frac{(\frac{-\varepsilon}{p})X}{1-(\frac{-\varepsilon}{p})X}, \\
 \zeta_p(\nL_\Zp; s) &=& \frac{(1-p^{-1})(1+(1-(pX)^{-1})\delta(X))}{(1-X)E} =
 \frac{1}{(1-X)(1-(\frac{-\varepsilon}{p})X)}.
\end{eqnarray*}
For $l$ odd, the above yields:
\begin{eqnarray*}
\delta(X) &=& \sum_{k'=1}^\frac{l-1}{2} p^{k'}X^{2k'} = (pX)^2 \frac{1-(pX^2)^{\frac{l-1}{2}}}{1-(pX)^2}, \\
\zeta_p(\nL_\Zp; s) &=& \frac{1-(pX^2)^{\frac{l+1}{2}}-X+X(pX^2)^{\frac{l-1}{2}} }{(1-X)(1-pX^2)}.
\end{eqnarray*}
For $l\ge2$, even, it yields:
\begin{eqnarray*}
\delta(X) &=& \sum_{k=l}^\infty (\frac{-\varepsilon}{p})^k p^{\frac{1}{2}l} X^k + \sum_{k'=1}^{\frac{l}{2}-1}p^{k'}X^{2k'} \\
&=& p^{\frac{1}{2}l}X^l \frac{1}{1-(\frac{-\varepsilon}{p})X} + pX^2 \frac{1-(pX^2)^{\frac{l}{2}-1}}{1-pX^2},  \\
\zeta_p(\nL_\Zp, s) &=& \frac{p^{\frac{l}{2}}X^l -p^{\frac{l}{2}-1}X^{l-1} }{(1-X)(1-(\frac{-\varepsilon}{p})X)} +
\frac{1-(pX^2)^{\frac{l}{2}}-X+X(pX^2)^{\frac{l}{2}-1} }{(1-X)(1-pX^2)}.
\end{eqnarray*}

\section{Explicit formul\ae, general case}

In this section, we will compute the functions $\mu$ and $\lambda$ (cf. Definiton~\ref{DEFLAMBDAMU}) explicitly in special cases. The expression
for $\lambda$ is quite general and can in principle be used to compute it for all lattices.

\begin{SATZ}\label{EXPLIZIT}Assume $p\not=2$.
\begin{enumerate}
\item Let $s \in \N$. $\vol(\SO(H^s)) = (1-p^{-s}) \cdot \prod_{i=1}^{s-1} (1-p^{-2i})$.
\item Let $s \in \Z_{\ge 0}$. $\lambda(\nL_\Zp\perp H; s) = (1-p^{-2s-2}) \cdot \lambda(\nL_\Zp; s+1)$.
\item Let $s \in \Z_{\ge 0}$. Let $\nL_\Zp=<\varepsilon_1, \dots, \varepsilon_k> \perp \nL'_\Zp$, 
where $\varepsilon_i \in \Z_p^*$ and $p^{-1}Q_L$ is integral on $\nL'_\Zp$. Assume $k>1$. 
Let $\varepsilon := (-1)^{\frac{k}{2}}\prod_{i=1}^{k} \varepsilon_i$, if $k$ is even. Then we have
\[ \frac{\lambda(\nL_\Zp; s)}{\lambda(\nL_\Zp; 0)} = |D|_p^{s} \prod_{i=1}^{\lfloor\frac{k-1}{2}\rfloor} \frac{1-p^{-2i-2s}}{1-p^{-2i}} \begin{cases}
1 & k \equiv 1 \enspace(2), \\ \frac{1 - (\frac{\varepsilon}{p})p^{-\frac{k}{2}-s}}{1 - (\frac{\varepsilon}{p})p^{-\frac{k}{2}}} & k \equiv 0 \enspace(2).
\end{cases}
\]
In particular $\lambda(\nL_\Zp; s)$ is a (quite simple) polynomial in $p^{-s}$. 
\item 
Let $s \in \Z_{\ge 0}$. For a unimodular lattice of discriminant $2^{m}\varepsilon$ and $\varepsilon' \in \Zp^*$ we have:
\[
\mu(\nL_\Zp, <\varepsilon'>; s) = \begin{cases} (1-(\frac{(-1)^{\frac{m}{2}}\varepsilon}{p})p^{-\frac{m}{2}-s}) & m \equiv 0 \enspace(2),
\\ (1+(\frac{(-1)^{\frac{m-1}{2}}\varepsilon\varepsilon'}{p})p^{-\frac{m-1}{2}-s}) & m \equiv 1 \enspace(2). \end{cases}
\]
\item
Let $s \in \Z_{\ge 0}$. For a unimodular lattice of discriminant $2^{m}\varepsilon$ we have:
\[ \lambda(\nL_\Zp; s) = \prod_{i=1}^{\lfloor\frac{m-1}{2}\rfloor}(1-p^{-2i-2s})\begin{cases}(1-(\frac{(-1)^{\frac{m}{2}}\varepsilon}{p})p^{-\frac{m}{2}-s}) &
m \equiv 0 \enspace(2), \\ 1 & m \equiv 1 \enspace (2). \end{cases} \]

\item 
Let $s \in \Z_{\ge 0}$. For a lattice with $\nL^*_\Zp/\nL_\Zp$ cyclic 
of order $p^\nu \not= 1$ and dimension $m\ge 2$, we may assume $\nL=\Z_p^m$, $Q_L(x) = \sum_{j=1}^{m-1} \varepsilon_j x_j^2 + p^\nu \varepsilon_m x_m^2$. Denote 
$\varepsilon = (-1)^{\frac{m-1}{2}} \prod_{j=1}^{m-1} \varepsilon_j$ if $m$ is odd. We have
\[ \lambda(\nL_\Zp; s) = |p^\nu|_p^{s+\frac{m-1}{2}} \prod_{i=1}^{\lfloor\frac{m}{2}\rfloor-1} (1-p^{-2i-2s}) \begin{cases}
1 & m \equiv 0 \enspace(2), \\ 1 - (\frac{\varepsilon}{p})p^{-\frac{m-1}{2}-s} & m \equiv 1 \enspace(2).
\end{cases}
 \]
\end{enumerate}
\end{SATZ}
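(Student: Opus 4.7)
The plan is to prove Parts 1, 2 and 5 by direct computation from smoothness of $\SO$ plus finite-field point counts, and to reduce Parts 3, 4 and 6 to these via the orbit equation (Theorem \ref{BAHNENGLEICHUNG}). For Part 1, $\SO(H^s)$ is smooth over $\Zp$, so its canonical measure equals $p^{-\dim \SO}\cdot\#\SO(H^s)(\F_p)$; substituting the classical order of the split orthogonal group over $\F_p$ and simplifying yields the stated product. Part 2 is an immediate unraveling of Definition \ref{DEFLAMBDAMU}: the numerators of $\lambda(\nL\perp H;s)$ and $\lambda(\nL;s+1)$ coincide, while the denominators $\prod_{i=1}^s(1-p^{-2i})$ and $\prod_{i=1}^{s+1}(1-p^{-2i})$ differ exactly by the factor $(1-p^{-2s-2})$.

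Part 5 is the same Hensel-plus-counting argument applied to a general unimodular $\nL_\Zp$ (where $\SO'=\SO$); the finite-field order of $\SO(\nL\perp H^s)$ depends on the Witt type of the discriminant, which for even $m$ is encoded by the Legendre symbol of $(-1)^{m/2}\varepsilon$, while for odd $m$ the orthogonal group is unique up to isomorphism. Part 4 follows from Theorem \ref{BAHNENGLEICHUNG} applied to the embedding $<\varepsilon'>\hookrightarrow \nL\perp H^s$: for $p\neq 2$ and $\nL\perp H^s$ unimodular, Hensel lifting from the Witt theorem over $\F_p$ together with Lemma \ref{STABILITY} force a single $\SO'$-orbit whose orthogonal complement is a unimodular lattice of rank $m-1$ and discriminant $\varepsilon\varepsilon'$ (modulo squares). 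The resulting identity $\mu_p(\nL,<\varepsilon'>;s) = \lambda_p(\nL;s)/\lambda_p(\alpha^\perp;s)$ combined with Part 5 applied in ranks $m$ and $m-1$ telescopes to the claimed formula.

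Part 3 is proved by iterating the orbit equation one unimodular summand at a time. For each $i$, the embedding $<\varepsilon_i>\hookrightarrow \nL$ has a unique $\SO'$-orbit (again by Witt plus Lemma \ref{STABILITY}, $<\varepsilon_i>$ being a unimodular direct summand), so $\lambda_p(\nL;s) = \lambda_p(\nL^{(i)};s)\cdot \mu_p(\nL,<\varepsilon_i>;s)$, where $\nL^{(i)}$ denotes $\nL$ with the $i$-th diagonal entry removed. Substituting the appropriate analogue of Part 4 for each $\mu$-factor and iterating $k$ times reduces $\lambda_p(\nL;s)$ to $\lambda_p(\nL';s)$ multiplied by an explicit product; dividing by the same identity at $s=0$ produces the stated ratio, with $|D|_p^s$ absorbing the cumulative discriminant shifts and the parity dichotomy in $k$ tracking the Legendre symbol of the total discriminant when $k$ is even. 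Part 6 follows analogously: write $\nL = \nL_0\perp <p^\nu\varepsilon_m>$ with $\nL_0$ unimodular, apply Theorem \ref{BAHNENGLEICHUNG} to $<p^\nu\varepsilon_m>\hookrightarrow \nL$ with $\kappa$ picking out the generator of the cyclic discriminant group $\nL^*/\nL\cong \Z/p^\nu$, and use Lemma \ref{STABILITY} plus cyclicity to obtain a single orbit whose orthogonal complement is unimodular of rank $m-1$. Substituting Part 5 on the right and solving for $\lambda_p(\nL;s)$ yields the formula; the factor $|p^\nu|_p^{s+(m-1)/2}$ arises from the discriminant normalization in Definition \ref{DEFLAMBDAMU} via Lemma \ref{REPDENSCLASSICAL}.

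The chief obstacle throughout is consistent bookkeeping of the Legendre symbols and $|D|_p^s$ factors as $s$ varies, as hyperbolic planes are added, and as orthogonal complements are taken — particularly in Part 6, where the discriminant shift from $\nL$ to $\alpha^\perp$ is non-trivial and where cyclicity (rather than unimodularity) is what collapses the orbit count. Once the uniqueness of the relevant $\SO'$-orbit is confirmed in each case (via Lemma \ref{STABILITY} together with the elementary $\Zp$-lattice classification for $p\neq 2$), the structural reduction is uniform and the remaining work is a mechanical, though case-split, simplification distinguishing the parities of $m$, $k$, and $\nu$.
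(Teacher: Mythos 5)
Your Parts 1, 2, 4, and 5 are fine (Parts 1 and 5 by a point-counting argument rather than the paper's Kitaoka-plus-Yang computation; Part 4 deduced from Part 5 via the orbit equation rather than read off from Yang's formula directly — both are legitimate alternatives, though you should cite Lemma \ref{LEMMA1ORBIT} rather than ``Witt plus Lemma \ref{STABILITY}'' for the uniqueness of the orbit, since Lemma \ref{STABILITY} only addresses stability under adding hyperbolic planes, not the initial count). Part 2 is exactly the paper's argument.

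However, Parts 3 and 6 have a genuine gap: the single-orbit claim is false in the cases you actually need. In Part 3 you strip off one unimodular summand $<\varepsilon_i>$ at a time, asserting a unique $\SO'(\nL)$-orbit each step. This holds only when the lattice still splits a \emph{unimodular plane}; once you are down to $<\varepsilon_1>\perp\nL'$ with $p\mid Q$ on $\nL'$, the paper explicitly notes (end of its proof of 3., invoking Lemma \ref{KITAOKAC531}) that there are \emph{two} $\SO'$-orbits of equal volume of unit-length vectors, so the identity $\lambda_p(\nL;s)=\lambda_p(\nL^{(i)};s)\,\mu_p(\nL,<\varepsilon_i>;s)$ you iterate breaks down. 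Moreover, your intermediate $\mu_p(\nL,<\varepsilon_i>;s)$ is needed for non-unimodular $\nL$, which Part 4 does not supply; the paper instead evaluates these $\beta_p$-factors via Yang's Theorem \ref{YANG}, and avoids the orbit-count problem by \emph{adding} a unimodular plane $S$ (so that $\nL\perp S$ and $\nL\perp<\alpha_1>$ always split a unimodular plane) rather than removing summands. In Part 6 the problem is sharper still: you argue ``cyclicity'' forces a unique orbit of $<p^\nu\varepsilon_m>\hookrightarrow\nL$, but the paper's own remark in the proof of 6.\ — ``there are 2 orbits (of equal volume) of vectors of length $\beta$ in a lattice $<\alpha p^\nu,\beta>$'' — already contradicts this for the rank-2 base case; the two vectors $(0,\pm1)$ are $\SO$-conjugate but \emph{not} $\SO'$-conjugate because the swap acts as $-1$ on the cyclic discriminant group $\Z/p^\nu$. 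So your orbit equation would lose a factor of $2$ somewhere. To repair Parts 3 and 6 you need either the paper's Yang-based evaluation of the vector-representation densities, or you must track the correct orbit multiplicities (one versus two) at each step explicitly, as the paper does via equations (\ref{eqsurf}) and (\ref{eqvect}) and the case distinction at the bottom of its proof.
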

\begin{proof} 
1. According to Kitaoka's formula (cf. Theorem~\ref{KITAOKA}), we have
\[\vol(\SO(H^s)) = \beta_p(H^s, 1) \beta_p(H^{s-1}\perp<-1>, -1) \vol(\SO(H^{s-1})).  \]
Theorem~\ref{YANG} yields $\beta_p(H^{s}, 1) = 1 - p^{-s}$ and $\beta_p(H^{s-1}+<-1>, 1) = 1 + p^{-s+1}$.
Furthermore, we have $\vol(\SO(H))=\beta_p(H, 1)=1-p^{-1}$.

2. follows immediately from the definition of $\lambda$ and 1.

3. Let $\nL_\Zp$ be a lattice, and $S=<\alpha_1,\alpha_2>$ a unimodular plane (e.g. a hyperbolic one).
Using the (elementary) orbit equation, Theorem~\ref{elementaryorbiteq} and Theorem~\ref{KITAOKA}, we get
\begin{eqnarray} \label{eqsurf}
\vol(\SO'(\nL_\Zp \perp S)) &=& |D|^{\frac{1}{2}} \beta_p(\nL_\Zp \perp S, \alpha_2) |D|^{\frac{1}{2}} \beta_p(\nL_\Zp \perp <\alpha_1>, \alpha_1) \nonumber \\
&& \cdot \vol(\SO'(\nL_\Zp)).
\end{eqnarray}

Hence, we have to apply Theorem~\ref{YANG} to forms of the shape
\[\nL_\Zp' = <\varepsilon_1,\dots,\varepsilon_{k'},p^{\nu_{k'+1}}\varepsilon_{k'+1},\dots,p^{\nu_{m}}\varepsilon_{m}>.\]
We get
\[ \beta_p(\nL_\Zp', \varepsilon'; s) = 1 + v(1)p^{d(1)}p^{-s}\begin{cases}
-p^{-1} & l(1,1) \equiv 0 \enspace(2), \\
(\frac{-\varepsilon'}{p})p^{-\frac{1}{2}} & l(1,1) \equiv 1 \enspace(2).
\end{cases} \]
We have
\begin{eqnarray*}
 l(1,1) &=& k' \\
 d(1)   &=& 1-\frac{1}{2}k' \\
 v(1)   &=& (\frac{-1}{p})^{\lfloor\frac{k'}{2}\rfloor} \prod_{i=1}^{k'} (\frac{\varepsilon_i}{p})
\end{eqnarray*}
Hence
\[ \beta_p(\nL_\Zp', \varepsilon'; s) = \begin{cases}
1 - (\frac{(-1)^\frac{k'}{2}\prod_{i=1}^{k'}\varepsilon_i}{p}) p^{-\frac{k'}{2}-s} & k' \equiv 0 \enspace (2), \\
1 + (\frac{(-1)^\frac{k'-1}{2}\prod_{i=1}^{k'}\varepsilon_i\varepsilon'}{p}) p^{-\frac{k'-1}{2}-s} & k' \equiv 1 \enspace (2).
\end{cases}
\]
Applying this to $\nL_\Zp'=\nL_\Zp \perp S$ and $\nL_\Zp' \perp <\alpha>$, we get the result for $k$ odd. For $k$ even write $\nL_\Zp = \nL_\Zp' \perp <\alpha>$, use
\begin{eqnarray}\label{eqvect}
\vol(\SO'(\nL_\Zp')) &=& |D|^{\frac{1}{2}} \beta_p(\nL_\Zp', \alpha) \cdot \vol(\SO'(\nL_\Zp)).
\end{eqnarray}
twice and the $k$ odd part.
Recall (Lemma \ref{KITAOKAC531}) that vectors of length $\alpha \in \Z_p^*$ form one orbit under $\SO'$, as long as the lattice in question splits a 
unimodular plane, otherwise there are 2 orbits of equal volume.

4. This is Siegel's formula, a special case of Theorem~\ref{YANG}.

5. Follows from 4. and the orbit equation, Theorem~\ref{BAHNENGLEICHUNG}.

6. Follows from 3. and the following calculation for $m \ge 2$ (which follows easily from equations (\ref{eqsurf}) and (\ref{eqvect}) and the fact $\vol(\SO'(<x>))=1$)). Observe, that there are 2 orbits (of equal volume) of vectors of length $\beta$ in a lattice $<\alpha p^\nu, \beta>$. 
\[ \lambda(\nL_\Zp;0) = |p^\nu|^{\frac{m-1}{2}} \prod_{i=1}^{\lfloor \frac{m}{2} \rfloor - 1} (1-p^{-2i}) \begin{cases} 1 & m \equiv 0 \enspace (2), \\ 1 - \left(\frac{\varepsilon}{p}\right)p^{-\frac{m-1}{2}} & m \equiv 1 \enspace (2). \end{cases} \] 
Here $\varepsilon = (-1)^{\frac{m-1}{2}} \prod_{i=1}^{m-1} \varepsilon_i$.
\end{proof}

Without proof, we give here come calculations in the case $p=2$. We plan to
extend all results of this paper to the case $p=2$ soon.

\begin{SATZ} \label{EXPLIZIT2}\quad\\
\begin{enumerate}
\item Let $s \in \N$. $\vol(\SO(H^s)) = (1-2^{-s}) \cdot \prod_{i=1}^{s-1} (1-2^{-2i})$.
\item Let $s \in \Z_{\ge 0}$. $\lambda(\nL_{\Z_2}\perp H; s) = (1-2^{-2s-2}) \cdot \lambda(\nL_{\Z_2}; s+1)$.
\item Let $s \in \Z_{\ge 0}$. For a unimodular lattice of even dimension of discriminant $\varepsilon$ we have:
\[ \lambda(\nL_{\Z_2}; s) = \prod_{i=1}^{\lfloor\frac{m-1}{2}\rfloor}(1-2^{-2i-2s})(1-(\frac{\varepsilon}{2})2^{-\frac{m}{2}-s}). \]
Here $(\frac{\varepsilon}{2}) = (-1)^{\frac{\varepsilon^2-1}{8}}$ is the Kronecker symbol.

\item Let $s \in \Z_{\ge 0}$. For a lattice $\nL_{\Z_2}$ of the form $\nL' \perp <\varepsilon'>$, where $\nL'_{\Z_2}$ is unimodular of discriminant $\varepsilon$, 
and $\varepsilon, \varepsilon' \in \Z_2^*$, we have:
\[ \lambda(\nL_{\Z_2}; s) = |2|_2^{s+\frac{m-1}{2}} \prod_{i=1}^{\lfloor\frac{m-1}{2}\rfloor}(1-2^{-2i-2s}). \]

\item Let $\nL_{\Z_2}$ be a lattice of the form $<\varepsilon_1,\varepsilon_2>, \varepsilon_i \in \Z_2^*$. We have
\[ \lambda(\nL_{\Z_2}; 0) = \frac{1}{2}. \]
\end{enumerate}
\end{SATZ}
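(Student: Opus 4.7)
The overall strategy is to adapt the proof of Theorem~\ref{EXPLIZIT} to $p=2$, replacing the $p\not=2$ version of Yang's formula (Theorem~\ref{YANG}) by the $2$-adic formula of Yang (proven for $p=2$ in \cite{Yang1}) and being careful about the $2$-adic orbit structure at each step. Concretely, for part (1) I would iterate Kitaoka's formula (Corollary~\ref{KITAOKA}) applied to $H^s$:
\[ \vol(\SO(H^s)) = \beta_2(H^s, 1)\, \beta_2(H^{s-1}\perp \langle -1\rangle, -1)\, \vol(\SO(H^{s-1})), \]
and evaluate each density via Yang's $p=2$ formula, so as to obtain $\beta_2(H^s,1) = 1-2^{-s}$ and $\beta_2(H^{s-1}\perp\langle-1\rangle, -1) = 1 + 2^{-(s-1)}$; induction on $s$ starting from $\vol(\SO(H)) = 1-2^{-1}$ then yields (1). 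Part (2) follows directly from Definition~\ref{DEFLAMBDAMU} together with (1), exactly as in the proof of Theorem~\ref{EXPLIZIT}(2).

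For parts (3)--(5) I would follow the route via the orbit equation (Theorem~\ref{BAHNENGLEICHUNG}) and the recursion (\ref{eqsurf}) used in the proof of Theorem~\ref{EXPLIZIT}(3), peeling off rank-one unit summands $\langle \alpha\rangle$ from $\nL_{\Z_2}$. Each such step contributes a factor $|D|^{1/2}\beta_2(\nL'_{\Z_2}, \alpha)$, computed from Yang's $2$-adic formula. The structure theory of quadratic $\Z_2$-lattices, however, forces a case split: an even-dimensional unimodular lattice over $\Z_2$ decomposes either as a sum of hyperbolic planes or as such a sum with an anisotropic two-dimensional summand attached, and the characters that appear in the answer are Kronecker symbols at $2$; this yields (3). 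The mixed lattice in (4) is treated by splitting off the unit vector $\langle\varepsilon'\rangle$ and reducing to (3) via (\ref{eqvect}). Finally, part (5) should fall out of a direct computation in the two-dimensional lattice $\langle\varepsilon_1,\varepsilon_2\rangle$: here $\SO'(\nL_{\Z_2})$ is small enough to determine by hand, and after correct normalization by the canonical measure the constant $\tfrac{1}{2}$ emerges from the index of the discriminant kernel in the full special orthogonal group.

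The principal obstacle is that essentially every orbit-counting ingredient used in the proof of Theorem~\ref{EXPLIZIT} --- Lemmas~\ref{KITAOKAT522}, \ref{KITAOKAC531}, \ref{LEMMA2HYPERBOLICPLANE}, and the stability Lemma~\ref{STABILITY} --- carries the standing assumption $p\not=2$. Before the above plan can actually be executed, one therefore needs $2$-adic substitutes: a classification of $\SO'$-orbits of vectors of a given length in $\nL_{\Z_2}$, a $2$-adic analogue of the fact that the orthogonal complement of a non-isotropic vector in $\nL\perp H^2$ still splits a hyperbolic plane, and the $2$-adic orbit-stability statement under adjunction of hyperbolic planes. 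This is genuinely delicate because the local structure theory of quadratic forms over $\Z_2$ is richer than over $\Z_p$ for odd $p$: one has to distinguish even and odd unimodular pieces, the group of square classes in $\Z_2^*$ is larger, and the discriminant groups of $\langle 1,1\rangle$-type planes behave differently; in particular the number of $\SO'$-orbits can differ from the odd-$p$ count, which is precisely what forces the anomalous factor $\tfrac{1}{2}$ in (5). Once these $2$-adic orbit lemmas are in place, inserting Yang's $p=2$ densities into Kitaoka's formula and the orbit equation should yield (3)--(5) in exactly the form stated.
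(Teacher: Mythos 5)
The paper itself gives \emph{no proof} of Theorem~\ref{EXPLIZIT2}: the sentence immediately preceding it reads ``Without proof, we give here [some] calculations in the case $p=2$. We plan to extend all results of this paper to the case $p=2$ soon.'' So there is no model argument to compare your proposal against; it can only be judged on its own terms, and on those terms it is a blueprint rather than a proof. You have correctly located the critical obstacle — the interpolated orbit equation (Theorem~\ref{BAHNENGLEICHUNG}), the stability lemma (Lemma~\ref{STABILITY}), and the structural Lemmas~\ref{KITAOKAT522}, \ref{KITAOKAC531}, \ref{LEMMA2HYPERBOLICPLANE} all carry the standing hypothesis $p\not=2$ — but you acknowledge rather than supply the $2$-adic substitutes, so the central ingredients of (3)--(5) remain open exactly where the authors left them open.

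Two remarks that sharpen the picture. First, parts (1)--(2) are genuinely within reach: (2) is a purely formal consequence of Definition~\ref{DEFLAMBDAMU}, and (1) follows from Kitaoka's recursion together with the $2$-adic form of Yang's density formula, provided you actually extract the claimed densities $\beta_2(H^s,1)=1-2^{-s}$ and $\beta_2(H^{s-1}\perp\langle -1\rangle,-1)=1+2^{-(s-1)}$ from the $2$-adic normal form (which, unlike the odd-$p$ case, mixes diagonal summands $2^l\varepsilon x^2$ with even $2\times 2$ blocks) --- you assert these values but do not derive them. Second, for (3)--(5) you do not actually need a $2$-adic analogue of the interpolated Theorem~\ref{BAHNENGLEICHUNG}: since these statements assert values of $\lambda$ only at $s\in\Z_{\ge 0}$, they concern volumes $\vol(\SO'(\nL\perp H^s))$ directly, and the \emph{elementary} orbit equation (Corollary~\ref{elementaryorbiteq}) already applies at $p=2$, as it rests only on Theorem~\ref{LEMMAVOLUMEFIBRATION} and Lemma~\ref{DISKRIMINANTENKERN}. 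What is irreducibly missing is the $2$-adic orbit count feeding the recursions (\ref{eqsurf}), (\ref{eqvect}): the odd-$p$ argument leans on unit-length vectors forming a single $\SO'$-orbit (Lemma~\ref{LEMMA1ORBIT}, Lemma~\ref{KITAOKAC531}), and this genuinely fails over $\Z_2$, where the square-class group is larger and odd unimodular planes are more numerous. This is exactly what produces the anomalous $\tfrac{1}{2}$ in part (5), so a proof of (5) must determine the orbit structure of $\langle\varepsilon_1,\varepsilon_2\rangle$ over $\Z_2$ explicitly; ``the constant emerges after correct normalization'' is not yet an argument.
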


\section{A simple illustration of the orbit equation}\label{SIMPLEILLUSTRATION}

\begin{PAR}
We give an easy example to illustrate the orbit equation:

Let $p\not=2$ and $\nL_\Zp$, be a unimodular lattice of {\em odd} dimension $m \ge 3$. We want to calculate
\[ \mu_p(\nL_\Zp, <\varepsilon p^a>, \nL_\Zp; s) = |p^a|_p^{-s+\frac{2-m}{2}} \beta_p(\nL_\Zp, <\varepsilon p^a>, \nL_\Zp; s). \]
We first assume, $a$ {\em odd}. One the one hand, by Yang's formula (Theorem \ref{YANG}), it is given by 
\[
|p^a|_p^{-s+\frac{2-m}{2}} \left(1+(1-p^{-1})\sum_{k=1}^{\frac{a-1}{2}}p^{(2-m)k}X^{2k} - p^{(2-m)\frac{a+1}{2}-1}X^{a+1}\right),
\]
where $X=p^{-s}$, as usual.
On the other hand, we have $\frac{a+1}{2}$ orbits of vectors of length $\varepsilon p^a$ (Lemma \ref{LEMMA1ORBIT}). The lattices $\alpha_i(<\varepsilon p^a>)^\perp$ are 
of the form \mbox{$<\epsilon'' p^{2i-1}>$}$\perp \nL_\Zp'$, where $\nL_\Zp'$ is unimodular. We have by Theorem \ref{EXPLIZIT}:
\[ \lambda(\alpha_i(<\varepsilon p^a>)^\perp; s) = |p^{2i-1}|^{s+\frac{m-2}{2}} \prod_{j=1}^{\frac{m-3}{2}} (1-p^{-2j-2s}) \]
and
\[ \lambda(\nL_\Zp; s) = \prod_{i=1}^{\frac{m-1}{2}}(1-p^{-2i-2s}). \]
The orbit equation hence reduces to the following identity of rational functions in $X=p^{-s}$,
\begin{eqnarray*} 
&& \prod_{i=1}^{\frac{m-1}{2}}(1-p^{-2i}X^{2})^{-1}  \\
&\cdot& X^{-a} p^{a\frac{m-2}{2}} \left(1+(1-p^{-1})\sum_{k=1}^{\frac{a-1}{2}}p^{(2-m)k}X^{2k} - p^{(2-m)\frac{a+1}{2}-1}X^{a+1}\right)  \\
&=& \sum_{i=1}^{\frac{a+1}{2}} X^{-(2i-1)} p^{(2i-1)\frac{m-2}{2}} \prod_{j=1}^{\frac{m-3}{2}} (1-p^{-2j}X^2)^{-1},
\end{eqnarray*}
which one can check easily in an elementary way.

If $a$ is even, let $2\varepsilon'$ be the discriminant of $\nL_\Zp$. By Yang's formula (Theorem \ref{YANG}), we get:
\[ |p^a|_p^{-s+\frac{2-m}{2}} \left(1 + (1-p^{-1})\sum_{k=1}^\frac{a}{2}p^{(2-m)k}X^{2k} + \left( (-1)^{\frac{m-1}{2}}\frac{\varepsilon \varepsilon'}{p} \right) p^{\frac{(2-m)(a+1)-1}{2}}X^{a+1}\right). \]
On the other hand, we have $\frac{a}{2}+1$ orbits of vectors of length $\varepsilon p^a$ (Lemma \ref{LEMMA1ORBIT}). The lattices $\alpha_i(<\varepsilon p^a>)^\perp$ are 
of the form $<\varepsilon'' p^{2i}> \perp \nL_\Zp'$, where $\nL_\Zp'$ is unimodular. For $i=0$, $\alpha_i(<\varepsilon p^a>)^\perp$ has discriminant
$\varepsilon\varepsilon'$.
We have by Theorem \ref{EXPLIZIT}:
\[ \lambda(\alpha_i(<\varepsilon p^a>)^\perp; s) = \prod_{j=1}^{\frac{m-3}{2}} (1-p^{-2j-2s}) \begin{cases} 
1-\left(\frac{(-1)^{\frac{m-1}{2}} \varepsilon\varepsilon'}{p} \right) p^{-\frac{m-1}{2}-s} & i=0, \\ 
 |p^{2i}|^{s+\frac{m-2}{2}} & i>0. \end{cases} \]
The orbit equation hence reduces to the following identity of rational functions in $X=p^{-s}$,
\begin{eqnarray*} 
&& \prod_{i=1}^{\frac{m-1}{2}}(1-p^{-2i}X^{2})^{-1}  \\
&\cdot& X^{-a} p^{a\frac{m-2}{2}} \left( 1 + (1-p^{-1})\sum_{k=1}^\frac{a}{2}p^{(2-m)k}X^{2k} + \left( \frac{(-1)^{\frac{m-1}{2}} \varepsilon' \varepsilon}{p} \right) p^{\frac{(2-m)(a+1)-1}{2}}X^{a+1} \right)  \\
&=& \left( \frac{1}{1-\left(\frac{(-1)^{\frac{m-1}{2}}\varepsilon\varepsilon'}{p}\right)p^{-\frac{m-1}{2}}X} + \sum_{k=1}^{\frac{a}{2}} X^{-2k} p^{k(m-2)} \right) \prod_{j=1}^{\frac{m-3}{2}} (1-p^{-2j}X^2)^{-1},
\end{eqnarray*}
which one can check easily in an elementary way.
 
\end{PAR}

\section{A global orbit equation}\label{SECTGLOBALORBIT}

\begin{LEMMA}
Let $\nM_\Q$, $\nL_\Q$ be vector spaces of dimensions $n$, $m$ respectively, where $n \le m-1$, with quadratic forms $Q_\nM$ and $Q_\nL$.
If $m \ge n+3$, the product of the canonical measures on $I(\nM_{\Q_\nu}, \nL_{\Q_\nu})$ converges absolutely (in the sense of \cite{Weil3}) and yields
the canonical measure on $I(\nM_{\A}, \nL_{\A})$.

In the case $n=m$, the product of the canonical measures on $\SO(\nL_{\Q_\nu})$ converges absolutely and yields the 
the canonical measure on $\SO(\nL_{\A})$, provided $m \ge 3$. It is the Tamagawa measure.
\end{LEMMA}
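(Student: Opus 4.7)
The strategy is to reduce the convergence statement to an explicit Euler product that, at almost every prime, is a finite product of terms $(1 - \varepsilon(p) p^{-a})$ with $a \ge 2$; absolute convergence then follows by comparison with $\zeta(a)$. The identification with the canonical adelic (resp.\ Tamagawa) measure is automatic from the way the local canonical measures have been constructed.

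Fix integral lattices $\nL_\Z \subset \nL_\Q$ and $\nM_\Z \subset \nM_\Q$, and let $S$ be the finite set of places consisting of $\infty$, $2$, and all primes dividing $d(\nL_\Z) d(\nM_\Z)$; at every $p \notin S$ both $\nL_\Zp$ and $\nM_\Zp$ are unimodular and $p$ is odd. First I would observe that the local canonical measures on $\nM_\nu^* \otimes \nL_\nu$ and $\Sym^2(\nM_\nu^*)$ are, by construction (\ref{WEILMEASURES}, \ref{DEFCANVOL}), built from the self-dual additive Haar measures associated with $\chi_\nu$; their restricted product is therefore the self-dual Haar measure on $(\nM^* \otimes \nL)_\A$ and $\Sym^2(\nM^*)_\A$ with respect to $\chi = \prod \chi_\nu$, which are the corresponding canonical adelic measures. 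Via the fibration $\alpha \mapsto \alpha^! Q_\nL$ of \ref{MASSSEQUENZ} and Theorem \ref{LEMMAVOLUMEFIBRATION}, the problem reduces to absolute convergence of $\prod_{p \notin S} \mu_p(U_p)$ with $U_p = \nIsome(\nM_\Zp, \nL_\Zp)$, respectively $U_p = \SO(\nL_\Zp)$ in the case $n = m$.

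At each $p \notin S$, Lemma \ref{REPDENSCLASSICAL} gives $\mu_p(U_p) = \beta_p$, since the discriminant factor $D(\nM_\Zp, \nL_\Zp)$ equals $1$. In the $\SO$ case, Theorem \ref{EXPLIZIT}(5) expresses this volume as a product of factors $(1 - p^{-2i})$ for $1 \le i \le \lfloor (m-1)/2 \rfloor$ together with at most one further factor of shape $(1 - \chi(p) p^{-m/2})$; the smallest exponent that occurs is $\min(2, m/2)$, which is $\ge 2$ exactly when $m \ge 3$. For the general $\nIsome(\nM, \nL)$ case I would iterate Kitaoka's formula (Corollary \ref{KITAOKA2}) one dimension at a time, expressing $\beta_p$ as a finite sum (over $\SO'$-orbits) of products of \emph{one-dimensional} representation densities inside unimodular ambient lattices of dimensions $m, m-1, \ldots, m-n+1$. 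Each such one-dimensional factor, by Theorem \ref{EXPLIZIT}(4), contributes a term $(1 \pm \varepsilon(p) p^{-a})$ with $a \in \{m'/2, (m'-1)/2\}$ according to the parity of the current ambient dimension $m'$; the minimum over $m' \in \{m-n+1, \ldots, m\}$ is $(m-n)/2$, attained when $m - n + 1$ is odd. Hence every exponent exceeds $1$ precisely when $m \ge n + 3$.

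The step I expect to be most delicate is the orbit-theoretic bookkeeping in the Kitaoka iteration: one must verify that every exponent in the resulting Euler product is really bounded below by $(m-n)/2$, uniformly in $p \notin S$, and that the summation over $\SO'$-orbits does not introduce spurious low-exponent terms through cancellation of discriminant factors. Once this is settled, each local factor is dominated by $(1 - p^{-a})^{-1}$ with $a \ge 2$, the full product is bounded by a finite product of $\zeta(a)$'s, the contribution from the finite set $S$ is a bounded positive constant, and absolute convergence follows. The identification with the canonical adelic measure on $\nIsome(\nM, \nL)(\A)$ is then immediate from the first paragraph. In the $\SO$ case, the resulting measure is by construction the Tamagawa measure: $\SO(\nL)$ is semisimple for $m \ge 3$, so no convergence factors are needed, and the local canonical measures are precisely the local components of the Tamagawa measure associated with the algebraic invariant volume form on $\SO(\nL_\Q)$ fixed by \ref{DEFCANVOL} and Lemma \ref{LEMMAVOLSO}.
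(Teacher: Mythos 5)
Your strategy coincides with the paper's: the official proof is a single sentence citing the explicit unimodular volume formulas (Theorem \ref{EXPLIZIT}, 4.\ and 5.) together with the product formula for the adelic modulus, and what you have written is essentially the fleshed-out version of exactly that.

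The one genuine gap is the step you yourself flag as ``most delicate'' --- the worry that the Kitaoka iteration over $\SO'$-orbits might introduce low-exponent or cancelling terms. At a prime $p\notin S$ this simply cannot happen: there $\nL_\Zp$ and $\nM_\Zp$ are unimodular and $p$ is odd, so Lemma~\ref{LEMMA1ORBIT} guarantees a \emph{single} $\SO$-orbit of vectors of any prescribed unit length in each intermediate unimodular lattice (and each $\alpha_k^\perp$ is again unimodular by Lemma~\ref{KITAOKATSSS}). Hence the sum in Corollary~\ref{KITAOKA2} has exactly one summand at every step, all discriminant prefactors $d_p(\cdot)$ are $1$, and $\beta_p$ is a bare product of $n$ one-dimensional densities from Theorem~\ref{EXPLIZIT}~4.\ inside unimodular ambient lattices of dimensions $m,\dots,m-n+1$, each contributing exponent $\lfloor m'/2\rfloor$; the minimum is $\lfloor(m-n+1)/2\rfloor$, which is $\ge 2$ iff $m\ge n+3$. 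Two smaller slips should also be repaired. In the $\SO$-case the inequality ``$\min(2,m/2)\ge 2 \iff m\ge 3$'' is false for $m=3$: the factor with exponent $m/2$ only occurs when $m$ is even (Theorem~\ref{EXPLIZIT}~5.), so for $m=3$ (odd) the minimum exponent is simply $2$. And the local canonical measures on $\SO(\nL_{\Q_\nu})$ are \emph{not} literally the local components of the Tamagawa measure; by Lemma~\ref{LEMMAVOLSO} and \ref{DEFCANVOL} they differ from those by $|d(\nL)|_\nu^{(m-1)/2}$, and it is the product formula $\prod_\nu|d(\nL)|_\nu=1$ that makes the two adelic products agree --- which is precisely the point the paper's proof makes explicitly in its last sentence.
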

\begin{proof}
Follows directly from the explicit volume formul\ae\ in the unimodular case (Theorem~\ref{EXPLIZIT}, 4., 5.) and standard facts about absolute convergence of
the occurring infinite products. One just obtains the Tamagawa number in the second case because of the product formula $|x|_\A=1$ 
for $x\in \Q^*$ for the adelic modulus, i.e. the discriminant factors cancel in the product.
\end{proof}

\begin{PAR}\label{GLOBALEISENSTEIN}
Let $\nL_\Z$ be a lattice of dimension $m$ with $Q_\nL \in \Sym^2(\nL_\Z^*)$ of signature $(m-2,2)$.

For 1-dimensional $\nM \cong \Z$, $(n=1)$, we have $Q \in \Z$ describing the quadratic form $x \mapsto Qx^2$ with associated `symmetric morphism' $\gamma = 2Q$, and
$\gamma_0=1$:
\begin{gather*}
\lim_{\alpha \rightarrow \infty} |\alpha|^{-\frac{m}{2}} e^{2\pi \alpha^2 Q} E_{Q}(\Psi_{\infty, \frac{m}{2}} \Phi(\chi_\kappa);  g_l(\alpha), s)  \\
= \lim_{\alpha \rightarrow \infty} |\alpha|^{-\frac{m}{2}} e^{2\pi \alpha^2 Q}  W_{Q,\infty}(\Psi_{\infty, \frac{m}{2}}(s), g_l(\alpha)) 
\prod_p W_{Q, p} (\Phi_p(\chi_\kappa; s), 1)  \nonumber \\
= |\gamma|_\infty^{s} |2\gamma|_\infty^{\frac{1}{2}(s-s_0)}  \mu_\infty(\nL_\Zp, <Q>, \kappa; s-s_0) \prod_p |\gamma|_p^{s} \mu_p(\nL_\Zp, <Q>, \kappa; s-s_0).
\end{gather*}
(cf. \ref{REPDENSNONARCHIMEDIANINTERPOLATION} and \ref{SATZREPDENSARCHIMEDIANINTERPOLATION}, respectively).

This is the quantity, which can be related to Arakelov geometry, using the result of \cite{Kudla4} or \cite{BK} (see section \ref{KUDLA}).

We therefore define
\[ \mu(\nL_\Z, \nM_\Z, \kappa; s) = \prod_\nu \mu_\nu(\nL_\Z, \nM_\Z, \kappa; s) \]
and
\[ \widetilde{\mu}(\nL_\Z, \nM_\Z, \kappa; s) = |2 d(\nM_\Z)|_\infty^{-\frac{1}{2}s} \mu(\nL_\Z, \nM_\Z, \kappa; s). \]
With this definition we have in the 1-dimensional case:
\begin{eqnarray*}\label{es}
\lim_{\alpha \rightarrow \infty} |\alpha|^{-\frac{m}{2}} e^{2\pi \alpha^2 Q} E_{Q}(\Psi_\infty \Phi(\chi_\kappa);  g_l(\alpha), s-s_0) &=& \widetilde{\mu}(\nL_\Zp, <Q>, \kappa; s).
\end{eqnarray*}
\end{PAR}
For $n>1$ we do not now, if the global $\widetilde{\mu}$ occurs as a limit in this fashion as well.

\begin{PAR}\label{PARGLOBALORBITEQUATION}
Let $D$ be the discriminant of $\nL$. Assume, that $\nL_\Zp$ splits $n$ hyperbolic planes at all $p\not=2$. Define
\[ \lambda(\nL_\Z; s) := \prod_\nu \lambda_\nu(\nL_\Z; s) \]
and
\[ \widetilde{\lambda}(\nL; s) := |D|_\infty^{\frac{1}{2}s} \lambda(\nL_\Z; s), \] 
where $D$ is the discriminant of $\nL_\Z$. Sometimes we will use also $\widetilde{\lambda}_p(\dots)$ for $|D|_p^{-\frac{1}{2}s} \lambda_p(\dots)$, 
and similarly $\widetilde{\mu}_p(\dots):=|2 d(\nM_\Z)|_p^{\frac{1}{2}s}\mu_p(\dots)$.
\end{PAR}

\begin{LEMMA}If $\nL_\Z$ is of signature $(m-2,2)$. Let $\nM_\Z$ be a positive definite lattice. Take a $\kappa \in (\nL_\Zh^*/\nL_\Zh) \otimes \nM_\Zh^*$.
Assume $m-n\ge 1$ and $\nIsome(\nM, \nL)(\Af) \cap \kappa \not= \emptyset$.

$\mu(\nL_\Z, \nM_\Z, \kappa; s),  \lambda(\nL_\Z; s)$ have meromorphic continuations to the entire complex plane and are holomorphic and nonzero in a neighborhood of $s=0$. Similarly for $\widetilde{\mu}, \widetilde{\lambda}$. They depend only on the genera of $\nL_\Z, \nM_\Z$.
\end{LEMMA}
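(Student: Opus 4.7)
The plan is to reduce both $\lambda(\nL_\Z; s)$ and $\mu(\nL_\Z, \nM_\Z, \kappa; s)$ to a finite number of elementary local factors together with one infinite Euler product that matches a classical $\zeta$- or $L$-function, and then invoke the known meromorphic continuation of the latter. Let $S$ be a finite set of rational primes containing $2$, every prime dividing the discriminants of $\nL_\Z$ and $\nM_\Z$, and every prime at which $\SO'(\nL_\Zp)$ acts with more than one orbit on $\nIsome(\nM, \nL)(\Qp) \cap \kappa_p$ or at which $\nL_\Zp$ fails to split $n$ hyperbolic planes.

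For $p \notin S$ the lattice $\nL_\Zp$ is unimodular, so Theorem~\ref{EXPLIZIT}(5) gives
\[ \lambda_p(\nL_\Zp; s) = \prod_{i=1}^{\lfloor (m-1)/2\rfloor}(1-p^{-2i-2s}) \cdot \epsilon_p(s), \]
with $\epsilon_p(s)=1$ if $m$ is odd and $\epsilon_p(s) = 1-\chi_\varepsilon(p) p^{-m/2-s}$ for a quadratic Dirichlet character $\chi_\varepsilon$ attached to $\nL_\Z$ if $m$ is even. Taking the product over $p \notin S$ yields
\[ \prod_{i=1}^{\lfloor (m-1)/2 \rfloor} \zeta^{(S)}(2i+2s)^{-1} \cdot L^{(S)}(s+m/2, \chi_\varepsilon)^{-1}, \]
(the $L$-factor omitted when $m$ is odd), which is meromorphic on $\C$. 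Multiplying by the finitely many bad-prime factors $\lambda_p$, $p \in S$ -- each a polynomial in $p^{-s}$ by Theorems~\ref{EXPLIZIT} and~\ref{EXPLIZIT2} -- and by the Archimedean factor $\lambda_\infty = \Gamma_{m-1,m}(s)$ from Definition~\ref{DEFLAMBDAMUARCHIMEDISCH} gives the meromorphic continuation of $\lambda$; the factor $|D|_\infty^{s/2}$ is entire, so $\widetilde{\lambda}$ inherits the property.

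For $\mu$, the stability Lemma~\ref{STABILITY} and the orbit equation Theorem~\ref{BAHNENGLEICHUNG} specialize at $p \notin S$ to
\[ \mu_p(\nL_\Zp, \nM_\Zp, \kappa; s) = \frac{\lambda_p(\nL_\Zp; s)}{\lambda_p(\alpha^\perp_\Zp; s)}, \]
because at such $p$ there is a single $\SO'(\nL_\Zp)$-orbit and $\alpha^\perp_\Zp$ is again unimodular. Since the genus of $\alpha^\perp_\Z$ is globally well-defined (\ref{DEFI}), the denominator is unambiguous and the analysis of $\lambda$ identifies $\prod_{p \notin S} \mu_p$ as a ratio of incomplete $\zeta$- and $L$-products. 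The bad-prime factors $\mu_p$, $p \in S$, are polynomials in $p^{-s}$ by Remark~\ref{MUPOLYNOMIAL} (hence entire), the Archimedean factor $\mu_\infty = \Gamma_{n,m}(s)$ is meromorphic, and the twist $|2 d(\nM_\Z)|_\infty^{-s/2}$ is entire; this gives the meromorphic continuation of $\mu$ and of $\widetilde{\mu}$.

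Non-vanishing at $s=0$ and genus dependence are cleanest via the volume interpretation. The local factor $\lambda_p(\nL_\Zp; 0) = \vol(\SO'(\nL_\Zp))$ is strictly positive and finite, and likewise $\mu_p(\nL_\Zp, \nM_\Zp, \kappa_p; 0)$ is the canonical volume of $\nIsome(\nM,\nL)(\Zp)\cap \kappa_p$, which is nonnegative and strictly positive at every $p$ by the hypothesis $\nIsome(\nM,\nL)(\Af) \cap \kappa \neq \emptyset$. The Archimedean factors $\Gamma_{m-1,m}(0)$ and $\Gamma_{n,m}(0)$ equal $\vol(\SO(\nL_\R))$ and $\vol(\nIsome(\nM,\nL)(\R))$ respectively (Lemma~\ref{REPDENSCLASSICAL}), again positive and finite; the Euler products converge absolutely at $s=0$ by the convergence lemma preceding the statement. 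Genus dependence is local: lattices in a common genus are isomorphic at every finite place and at $\R$, so every $\lambda_\nu, \mu_\nu$ depends only on the local isomorphism class. The main obstacle is ruling out zeros at $s=0$ of the $\zeta$- and $L$-factors, i.e.\ showing $\zeta(2i) \ne 0$ for $1 \le i \le \lfloor (m-1)/2 \rfloor$ (immediate) and, for $m$ even, $L(m/2, \chi_\varepsilon) \ne 0$ (the classical non-vanishing for Dirichlet $L$-functions at positive integers); the volume interpretation of $\lambda(\nL_\Z; 0)$ as a positive finite Tamagawa volume short-circuits any direct analytic estimate.
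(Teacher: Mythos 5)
Your proof is an expansion of essentially the same approach the paper takes; the paper's own proof is a one-liner, ``Follows directly from (\ref{EXPLIZIT}) and using standard facts about the occurring quadratic $L$-series,'' and your argument fills in precisely what that sentence suppresses: the identification of the good Euler factors with incomplete $\zeta$- and $L$-products via Theorem~\ref{EXPLIZIT}(5) for $\lambda$, the reduction of $\mu_p$ to a ratio of $\lambda$-factors via the orbit equation, and the entire-ness of the finitely many bad factors and Archimedean and discriminant twists.

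One caveat worth noting, which is shared with the paper rather than a defect unique to your argument: your reduction of $\mu_p$ to $\lambda_p(\nL_\Zp;s)/\lambda_p(\alpha^\perp_\Zp;s)$ invokes the orbit equation, which requires $\nL_\Zp$ to split $n$ hyperbolic planes. For $p \nmid 2\,d(\nL)$ the lattice $\nL_\Zp$ is unimodular and splits at most $\lfloor m/2 \rfloor$ hyperbolic planes, so the condition is automatic only when $m$ is sufficiently large relative to $n$ (roughly $m \ge 2n+2$ in the even case). The lemma as stated only assumes $m-n\ge 1$, so there is a range of $(m,n)$ where this reduction technically fails at infinitely many good primes and one would instead need the direct unramified Whittaker evaluation (cf.\ Theorem~\ref{REPDENSNONARCHIMEDIANINTERPOLATION} and the classical Kudla--Rallis/Siegel computation). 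Since the paper's applications are to $n=1$ with $m\ge 3$, this gap is harmless in practice, and your argument is certainly in the spirit intended by the author. You also implicitly assert that your set $S$ is finite (in particular that $\SO'(\nL_\Zp)$ acts with a single orbit for almost all $p$); for $n=1$ this is Lemma~\ref{LEMMA1ORBIT}, and for general $n$ it is standard but not proved in the paper — worth a citation or a sentence.
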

\begin{proof}
Follows directly from (\ref{EXPLIZIT}) and using standard facts about the occurring quadratic $L$-series.
\end{proof}
In particular, for the meromorphic continuation of the Eisenstein series (\ref{es}) above remains true also in exceptional cases.

We obviously get taking the product over all $\nu$ of equations (\ref{BAHNENGLEICHUNG}) and (\ref{BAHNENGLEICHUNGARCHIMEDISCH}), respectively: 
\[ \lambda^{-1}(\nL_\Z; s) \mu(\nL_\Z, \nM_\Z, \kappa; s) = \sum_{\alpha \SO'(\nL_\Zh) \subset \nIsome(\nM, \nL)(\Af) \cap \kappa} \lambda^{-1}(\alpha^\perp_\Z; s), \]
where $\alpha^\perp_\Z$ is a lattice such that $\alpha^\perp_\Z \otimes \Zp \cong \alpha^\perp_\Zp$ for all $p$. It can be realized in some class $\nL_\Z'$ in the genus of $\nL_\Z$
(\ref{DEFI}). $\lambda$ depends only on its genus.

For the corresponding $\widetilde{\mu}, \widetilde{\lambda}$, this equation is not true anymore. However, we get at least, denoting by $\R_N$ the reals modulo
rational multiples of $\log(p)$, for $p \mid N$:
\begin{SATZ}\label{GLOBALORBITEQUATION}
Assume $m \ge 3$, $m-n \ge 1$. Let $D$ be the discriminant of $\nL_\Z$ and $D'$ be the $D$-primary part of the discriminant of $\nM_\Z$.
\[ \widetilde{\lambda}^{-1}(\nL_\Z; 0) \widetilde{\mu}(\nL_\Z, \nM_\Z, \kappa; 0) = \sum_{\alpha \SO'(\nL_\Zh) \subset \nIsome(\nM, \nL)(\Af) \cap \kappa} \widetilde{\lambda}^{-1}(\alpha^\perp_\Z; 0) \]
and
\[ \frac{\dd}{\dd s} \left. { \left(\widetilde{\lambda}^{-1}(\nL_\Z; s) \widetilde{\mu}(\nL_\Z, \nM_\Z, \kappa; s) \right) } \right|_{s=0} \equiv \sum_{\alpha\SO'(\nL_\Zh) \subset \nIsome(\nM, \nL)(\Af) \cap \kappa} \frac{\dd}{\dd s} \left.  \widetilde{\lambda}^{-1}(\alpha^\perp_\Z; s) \right|_{s=0} \]
in $\R_{2DD''}$, where $D''$ is the product of primes such that $p^2 \nmid D'$.
\end{SATZ}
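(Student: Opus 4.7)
The first equation at $s=0$ is essentially a restatement of the untilded global orbit equation just recalled. Indeed, the correction factors $|D|_\infty^{s/2}$, $|2d(\nM_\Z)|_\infty^{s/2}$ and $|d(\alpha^\perp_\Z)|_\infty^{s/2}$ all collapse to $1$ at $s=0$, so that $\widetilde{\lambda}(\cdot\,;0)=\lambda(\cdot\,;0)$ and $\widetilde{\mu}(\cdot\,;0)=\mu(\cdot\,;0)$, and the $s=0$ case of the untilded equation in \ref{PARGLOBALORBITEQUATION} yields the first assertion.

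For the derivative equation, the plan is to track the tilded/untilded discrepancy as an explicit correction factor per orbit. Substituting $\lambda^{-1}(\alpha^\perp_\Z;s)=|d(\alpha^\perp_\Z)|_\infty^{s/2}\widetilde{\lambda}^{-1}(\alpha^\perp_\Z;s)$ into the untilded orbit equation and factoring out the global correction from $\widetilde{\lambda}^{-1}\widetilde{\mu}$, one obtains
\[
\widetilde{\lambda}^{-1}(\nL_\Z;s)\widetilde{\mu}(\nL_\Z,\nM_\Z,\kappa;s) = \sum_\alpha C_\alpha(s)\widetilde{\lambda}^{-1}(\alpha^\perp_\Z;s), \qquad C_\alpha(s)=\left(\frac{|d(\alpha^\perp_\Z)|_\infty}{|D|_\infty|2d(\nM_\Z)|_\infty}\right)^{s/2}.
\]
The key arithmetic identity $d(\nM_\Z)d(\alpha^\perp_\Z)=i_\alpha^{2}D$, where $i_\alpha=[\nL'_\Z:\alpha'(\nM_\Z)\oplus\alpha^\perp_\Z]$ for any realization $\alpha^\perp_\Z=(\alpha')^\perp\cap\nL'_\Z$ in the genus of $\nL_\Z$, simplifies this to $C_\alpha(s)=\left(i_\alpha/(\sqrt{2}|d(\nM_\Z)|)\right)^{s}$. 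Since $C_\alpha(0)=1$, differentiating at $s=0$ and subtracting the sum $\sum_\alpha\partial_s\widetilde{\lambda}^{-1}(\alpha^\perp_\Z;s)|_{s=0}$ from both sides gives
\[
\left.\partial_s\bigl(\widetilde{\lambda}^{-1}\widetilde{\mu}\bigr)\right|_{s=0} - \sum_\alpha \left.\partial_s\widetilde{\lambda}^{-1}(\alpha^\perp_\Z;s)\right|_{s=0} = \sum_\alpha\bigl(\log i_\alpha -\tfrac{1}{2}\log 2 - \log|d(\nM_\Z)|\bigr)\widetilde{\lambda}^{-1}(\alpha^\perp_\Z;0),
\]
reducing the derivative statement to showing that this error lies in $\sum_{p\mid 2DD''}\Q\log p$.

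Splitting $\log i_\alpha$ and $\log|d(\nM_\Z)|$ by prime valuations displays the error as a sum of $\log p$ contributions, which must be analyzed prime by prime. Those for $p\mid 2D$ are automatically allowed since $p\mid 2DD''$. At a prime $p\nmid 2D$ with $p\nmid d(\nM_\Z)$, both $\nL_\Zp$ and $\nM_\Zp$ are unimodular, so $\alpha(\nM_\Zp)$ is an orthogonal summand of $\nL_\Zp$; hence $i_{\alpha,p}=1$ and $\nu_p(d(\nM_\Z))=0$, and the contribution vanishes. The delicate case, which I expect to be the main obstacle, is a prime $p\nmid 2D$ with $p\mid d(\nM_\Z)$: here $\nM_\Zp$ is non-unimodular and multiple local orbits occur in general. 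To handle such a prime I plan to invoke the local orbit equation (Theorem \ref{BAHNENGLEICHUNG}) in its tilded form, whose derivative at $s=0$ expresses the local discrepancy as $\log p$ times a rational number. Assembling local contributions via the factorization of adelic orbits into products of local orbits (using strong approximation for $\SO'$, available since $m\ge 3$), and exploiting both the assumed hyperbolic splittings of $\nL_\Zp$ and the Jordan decomposition of $\nM_\Zp$, one must check a prime-by-prime combinatorial identity ensuring that these bad-prime contributions lie exactly in the span afforded by $D''$, the squarefree part of the $D$-primary component $D'$ of $d(\nM_\Z)$. This matching with the specific definition of $D''$ is where the technical heart of the argument lies.
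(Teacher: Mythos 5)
The first assertion and the algebraic manipulations leading to $C_\alpha(s)=\bigl(i_\alpha/(\sqrt{2}|d(\nM_\Z)|)\bigr)^s$ via the index identity $d(\nM_\Z)d(\alpha^\perp_\Z)=i_\alpha^2 D$ are correct, as is the observation that the error becomes $\sum_\alpha C'_\alpha(0)\,\widetilde{\lambda}^{-1}(\alpha^\perp_\Z;0)$ and the disposal of the two easy cases ($p\mid 2D$ automatically allowed; $p\nmid 2D$, $p\nmid d(\nM_\Z)$ contributing $0$ by unimodularity and Lemma~\ref{KITAOKATSSS}). However, your argument has a genuine gap exactly where you flag it: the remaining primes $p\nmid 2D$ with $p\mid d(\nM_\Z)$ are not a single ``delicate case'' to be pushed into $\sum\Q\log p$ — they split into two sub-cases with \emph{opposite} required behavior. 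For those $p$ with $\nu_p(d(\nM_\Z))\le 1$, which are \emph{not} in $2DD''$, the coefficient of $\log p$ in the error must vanish identically, not merely be rational. This is precisely what the paper's proof supplies and you do not: Lemma~\ref{LEMMA1ORBIT} gives a \emph{unique} (primitive) $\SO'(\nL_\Zp)$-orbit when $\nL_\Zp$ is unimodular and $\nu_p(q)\le 1$, and the gluing/discriminant computation shows $\nu_p(d(\alpha^\perp_\Zp))=\nu_p(q)$, hence $\nu_p(i_\alpha)=\nu_p(d(\nM_\Z))$ for every global orbit, killing the coefficient termwise. Only for $\nu_p(d(\nM_\Z))\ge 2$ does $p\mid D''$ permit a nonzero $\log p$ contribution. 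Without isolating and proving the vanishing in the first sub-case, your plan does not establish the congruence.

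A second missing ingredient is the reduction from general $n$ to $n=1$. The paper's ``follows by induction'' means induction on $n$ via the interpolated Kitaoka formula (Theorem~\ref{KITAOKAINTERPOLATED}), which peels off one vector at a time and lets one apply the $n=1$ single-orbit fact at the good primes while pushing the problem to the smaller lattice $\alpha^\perp$. You allude to the Jordan decomposition of $\nM_\Zp$ and strong approximation, but you do not carry out this reduction, and for $n>1$ the count of local orbits and the relation $\nu_p(i_\alpha)=\nu_p(d(\nM_\Z))$ are genuinely more subtle than the $n=1$ case. Finally, note that the per-orbit factor $\widetilde{\lambda}^{-1}(\alpha^\perp_\Z;0)$ is a product over all places and hence typically transcendental; what makes the prime-by-prime bookkeeping work is that the factors at good places are equal for \emph{all} orbits (they equal $\widetilde{\lambda}_\nu^{-1}(\nL;0)\widetilde{\mu}_\nu(\nL,\nM,\kappa;0)$), so they factor out of the sum and the remaining orbit-dependent data over bad primes is rational. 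You should make this factorization explicit, since as written the error is presented as a sum of transcendentals against $\log p$'s.
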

\begin{proof}
Follows by induction from the fact that for $p \nmid D$ and $Q$ square-free at $p$ there is only 1 orbit (\ref{LEMMA1ORBIT}) in $\nIsome(<Q>, \nL)(\Zp)$ and $\alpha^\perp$ has discriminant $p$ at $p$.
\end{proof}

We will need also a global version of Kitaoka's formula (\ref{KITAOKA}):

\begin{SATZ}\label{GLOBALKITAOKA}
Assume $m \ge 3$, $m-n \ge 1$. Let $D$ be the discriminant of $\nL_\Z$ and $\nM_\Z = \nM_\Z' \perp \nM_\Z''$. Let $D'$ be the $D$-primary part of the 
discriminant of $\nM_\Z'$ (not $\nM_\Z$ !).
Let $\kappa \in (\nL_\Z^*/\nL_\Z) \otimes \nM_\Z^*$ with a corresponding decomposition $\kappa = \kappa' \oplus \kappa''$.
We have
\[ \widetilde{\lambda}^{-1}(\nL_\Z; 0) \widetilde{\mu}(\nL_\Z, \nM_\Z, \kappa; 0) = \sum_{\substack{\alpha \SO'(\nL_\Zh) \subset \nIsome(\nM', \nL)(\Af) \cap \kappa' \\ \kappa'' \cap \alpha^\perp_\Af \otimes (\nM''_\Af)^* \not= \emptyset}} \widetilde{\lambda}^{-1}(\alpha^\perp_\Z; 0) \widetilde{\mu}(\alpha^\perp_\Z, \nM_\Zh, \kappa''; 0) \]
and
\begin{gather*} 
\frac{\dd}{\dd s} \left. { \left( \widetilde{\lambda}^{-1}(\nL_\Z; s) \widetilde{\mu}(\nL_\Z, \nM_\Z, \kappa; s) \right) } \right|_{s=0} \\ = 
\sum_{\substack{\alpha \SO'(\nL_\Zh) \subset \nIsome(\nM', \nL)(\Af) \cap \kappa' \\ \kappa'' \cap \alpha^\perp_\Af \otimes (\nM''_\Af)^* \not= \emptyset}} \frac{\dd}{\dd s} \left. { \left( \widetilde{\lambda}^{-1}(\alpha^\perp_\Z; s) \widetilde{\mu}(\alpha^\perp_\Z, \nM'', \kappa''; s)
\right) } \right|_{s=0} 
\end{gather*}
in $\R_{2DD'}$. Here $\kappa'' \in \nL_\Zh^*/\nL_\Zh$ is considered as an element of $(\alpha^\perp_\Zh)^* / \alpha^\perp_\Zh \otimes (\nM''_\Zh)^*$ via 
$\kappa'' \mapsto \kappa'' \cap \alpha^\perp_\Af \otimes (\nM''_\Af)^*$.
\end{SATZ}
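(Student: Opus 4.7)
The plan is to derive both identities by iterating the global orbit equation (Theorem \ref{GLOBALORBITEQUATION}), paralleling the way Kitaoka's local formula (Corollary \ref{KITAOKA}) follows from the basic fibration (Theorem \ref{LEMMAVOLUMEFIBRATION}).

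First I would decompose the global orbit set combinatorially: restriction along the inclusion $\nM'_\Zh \hookrightarrow \nM_\Zh$ sets up a bijection between the $\SO'(\nL_\Zh)$-orbits in $\nIsome(\nM,\nL)(\Af)\cap\kappa$ and pairs consisting of an $\SO'(\nL_\Zh)$-orbit $\alpha \subset \nIsome(\nM',\nL)(\Af)\cap\kappa'$ together with an $\SO'(\alpha^\perp_\Zh)$-orbit $\gamma \subset \nIsome(\nM'',\alpha^\perp)(\Af)\cap\kappa''$. This is the global analog of the local fibration underlying Corollary \ref{KITAOKA} via Theorem \ref{LEMMAVOLUMEFIBRATION}.

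Next I would apply Theorem \ref{GLOBALORBITEQUATION} to the pair $(\nL_\Z,\nM_\Z)$ to expand the left-hand side as $\sum_\beta \widetilde{\lambda}^{-1}(\beta^\perp_\Z;0)$, and regroup this sum by $\alpha = \beta|_{\nM'_\Zh}$. For each fixed $\alpha$, the inner sum runs over $\gamma$ from the second factor of the bijection above and may be collapsed by a second application of Theorem \ref{GLOBALORBITEQUATION}, this time to the pair $(\alpha^\perp_\Z,\nM''_\Z)$, producing the factor $\widetilde{\lambda}^{-1}(\alpha^\perp_\Z;0)\,\widetilde{\mu}(\alpha^\perp_\Z,\nM''_\Z,\kappa'';0)$. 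Summing over $\alpha$ yields the value statement at $s=0$; for the derivative statement one differentiates both intermediate identities at $s=0$ and combines them via the Leibniz rule.

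The main obstacle is controlling the bad primes across the two nested applications. The first application contributes errors at the primes dictated by Theorem \ref{GLOBALORBITEQUATION} for $(\nL_\Z,\nM_\Z)$, and the second contributes errors at the primes dictated for each $(\alpha^\perp_\Z,\nM''_\Z)$. For any prime $p \nmid 2DD'$, the lattice $\nL_\Zp$ is unimodular and $d(\nM'_\Zp)$ is a unit at $p$; consequently every orbit representative $\alpha$ has primitive image in $\nL_\Zp$ and $\alpha^\perp_\Zp$ is again unimodular, so both applications are clean at $p$ and contribute no $\log p$-error there. At primes $p \mid 2DD'$ the residual discrepancies --- both from the mod-$\log p$ failures of Theorem \ref{GLOBALORBITEQUATION} and from the tilde-versus-untilde discriminant factors governed by the relation $d(\alpha^\perp_\Zp)\,d(\nM'_\Zp) = D \cdot (\mathrm{index})_p^2$ --- are rational multiples of $\log p$, so they fit into $\R_{2DD'}$ as required.
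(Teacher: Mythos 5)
The paper proves this statement by applying the local interpolated Kitaoka formula (Theorem \ref{KITAOKAINTERPOLATED}) prime by prime: at $p \nmid 2DD'$ there is a unique local orbit for $\nM'$ and all local discriminant factors are trivial, so the local identity is an honest polynomial identity in $p^{-s}$; at the remaining finitely many primes one only appeals to the value at $s=0$, and the product over $\nu$ then gives both assertions. Your proposal — iterating the global orbit equation Theorem \ref{GLOBALORBITEQUATION} twice — is a genuinely different route, and the combinatorial bijection between $\SO'(\nL_\Zh)$-orbits and pairs $(\alpha,\gamma)$ you describe is correct and does give the value statement at $s=0$.

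The gap is in the error analysis for the derivative. You claim that for $p \nmid 2DD'$ "both applications are clean at $p$ and contribute no $\log p$-error there." This is false: such a prime can satisfy $p^2 \mid d(\nM'')$, and the error modulus $\R_{2DD''}$ in Theorem \ref{GLOBALORBITEQUATION} is governed by the discriminant of the \emph{whole} embedded lattice ($\nM$ in the first application, $\nM''$ in the second), not merely that of $\nM'$. At such a $p$ there can be non-saturated local orbits of $\nIsome(\nM,\nL)(\Zp)$, producing a genuine $\log p$-discrepancy in \emph{both} applications. What actually makes your route work is a cancellation between the two errors: iterating the exact (untilde) global Kitaoka identity and tracking the remaining tilde discriminant factors, the total discrepancy between the two sides of your iterated equation is
\[
\sum_\alpha \tfrac{1}{2}\log\frac{|d(\alpha^\perp_\Z)|_\infty}{|D\,d(\nM'_\Z)|_\infty}\,\sum_\gamma\lambda^{-1}(\gamma^\perp;0),
\]
whose $p$-component vanishes for $p\nmid 2DD'$ precisely because both $\nL_\Zp$ and $\nM'_\Zp$ are unimodular there (making $\alpha^\perp_\Zp$ unimodular), with no reference to $\nM''$ needed. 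You would need to exhibit this cancellation explicitly; merely asserting each application separately is clean is not correct. Alternatively, and more economically, you could adopt the paper's local route with Theorem \ref{KITAOKAINTERPOLATED}, where the one-orbit observation at good primes directly gives an exact polynomial identity and the cancellation issue never arises.
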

\begin{proof}
Let first $s \in \Z_{\ge 0}$.
For all $p \nmid 2DD'$, there is only one orbit (generated for $s=0$ by $\alpha$, say) in $\nIsome(\nM', \nL\oplus H^s)(\Zp)$. Hence we have, 
using Kitaoka's formula \ref{KITAOKAINTERPOLATED}:
\[ \lambda^{-1}_p(\nL_\Zp; s) \widetilde{\mu}_p(\nL_\Zp, \nM_\Zp, \nL_\Zp; s) = \lambda^{-1}_p(\alpha^\perp_\Zp; s) \widetilde{\mu}_p(\alpha^\perp_\Zp, \nM_\Zp'', \kappa''; s). \]
For all other $p$, we have the equation 
\[ \lambda^{-1}_p(\nL_\Zp; 0) \mu_p(\nL_\Zp, \nM_\Zp, \kappa; 0) = 
\sum_{\substack{\alpha \SO'(\nL_\Zp) \subset \nIsome(\nM', \nL)(\Qp) \cap \kappa' \\ \kappa'' \cap \alpha^\perp_\Qp \otimes (\nM''_\Qp)^* \not= \emptyset}} \lambda^{-1}_p(\alpha^\perp_\Zp; 0) \mu_p(\alpha^\perp_\Zp, \nM'', \kappa''; 0). \]
and since the quantities $\widetilde{\mu}_p$ are polynomials in $p^{-s}$ in this case (cf. \ref{REPDENSNONARCHIMEDIANINTERPOLATION}),
the assertion is true.
\end{proof}

\section{Applications to Kudla's program}\label{KUDLASPROGRAM}

\begin{PAR}\label{INTEGRALMODELS}
The investigations of recursive properties of representation densities in this paper were 
motivated by the program of Kudla (cf. \cite{Kudla1, Kudla2, Kudla3, Kudla5, Kudla6, KR1, KR2, KR3, KRY1, KRY2, KRY3}) to describe the relation of heights of special subvarieties of
certain Shimura varieties to special derivatives of the Eisenstein series considered in section \ref{DEFEISENSTEIN} of this paper. 
We briefly describe this here and report on related results obtained in the thesis of the author \cite{Thesis}, which will be published in a forthcoming paper \cite{Paper2}.
The object of study are the orthogonal Shimura varieties, which are defined over $\Q$ ($m \ge 3$) and whose associated complex analytic orbifolds are given by
\[ [ \SO(\nL_\Q) \backslash \nX_\nO \times (\SO(\nL_\Af) / K) ], \]
where $\nL_\Q$ is a quadratic space of signature $(m-2,2)$, $\nX_\nO$ is the associated (Hermitian) symmetric space and $K$ is a compact open subgroup of
$\SO(\nL_\Af)$. There is an injective intertwining map 
$\nh_\nO: \nX_\nO \hookrightarrow \Hom(\SSS, \SO(\nL_\R))$ such that $\nO:=(\nP_\nO:=\SO(\nL), \nX_\nO, \nh_\nO)$ is a Shimura datum \cite[Definition 3.2.2]{Thesis}.
If a lattice $\nL_\Z$ is chosen, $\nO$ is integral at all primes not dividing the discriminant $D$ of $\nL_\Z$ (this means, that $\nP_\nO$ extends to a reductive group scheme over $\Zpp$).  In the first part of the thesis of the author \cite[Main Theorems 4.2.2, 4.3.5]{Thesis}, it is sketched that there is a {\em canonical} integral model $\nSh({}^K_\nRPCD \nO)$ of a toroidal compactification of the above variety defined over $\Z[1/2D]$  ($m \ge 3$) provided $K$ is admissible, i.e. for all $p \nmid D$ of the form $\SO(\nL_\Zp) \times K^{(p)}$, where $K^{(p)}$ is a compact open subgroup of $\SO(\nL_\Afp)$\footnote{The proofs still rely on a technical assumption which remains open.}.  The compactification depends on the additional datum of a rational polyhedral cone decomposition $\nRPCD$. In \cite[Main Theorem 4.5.2]{Thesis} it is also shown, that there exists a theory of 
Hermitian automorphic vector bundles on these models. For each $\SO$-equivariant vector bundle $\mathcal{E}$ on 
\[ \nShD(\nO) = \{ <v> \in \PP \nL \where Q(v)=0 \} \] 
(the compact dual, defined over $\Z[1/2D]$, too) equipped with a $\SO(\nL_\R)$-invariant Hermitian metric on $\mathcal{E}_\C$, restricted to the image of the Borel embedding, there is an associated  (canonically determined) Hermitian automorphic vector bundle $(\Xi^*\mathcal{E}, \Xi^*h)$  on $\nSh({}^K_\nRPCD \nO)$ whose metric has logarithmic singularities along the
boundary divisor of $\nSh({}^K_\nRPCD \nO)(\C)$. The associated analytic bundle, restricted to the uncompactified Shimura variety, can be canonically identified with 
\[ [\SO(\nL_\Q) \backslash \mathcal{E}_\C|_{\nX_\nO} \times (\SO(\Af) / K)], \]
equipped with the quotient of the given metric. 
The construction is functorial in morphisms of Shimura data, in particular, everything above commutes with inclusions of lattices.
The results so far are conditional on a missing technical hypothesis \cite[Conjecture 4.3.2]{Thesis}.
In what follows, we let $\mathcal{E}$ be the restriction of the tautological bundle on $\PP \nL$ and $h$ be a certain multiple \cite[Definiton 11.3.1]{Thesis} of the 
metric $v \mapsto \langle v, \overline{v} \rangle$. 
\end{PAR}

\begin{PAR}\label{SPECIALCYCLE}
Let $\nM$ be a positive definite space and $\varphi \in S(\nL_\Af \otimes \nM_\Af^*)$ a locally constant function with compact support.
Provided that $\varphi$ is $K$-invariant, this defines a {\bf special cycle} $\nZ(\nL,\nM,\varphi; K)$ on these Shimura varieties, defined as
\begin{gather*}
 \sum_{ K \alpha \subset \nIsome(\nM,\nL)(\Af) \cap \supp(\varphi) } \varphi(\alpha) \left[ \SO(\alpha^\perp_\Q) \backslash \nX_{\nO(\alpha^\perp)} \times \SO(\alpha^\perp_\Af) / ({}^gK \cap \SO(\alpha^\perp_\Af)) \right].
\end{gather*}
Observe that the sum goes over finitely many orbits and that there is a natural embedding $\nX_{\nO(\alpha_\Z^\perp)} \hookrightarrow \nX_\nO$ and
an embedding $\SO(\alpha^\perp_\Af) \rightarrow \SO(\nL_\Af), h \mapsto hg^{-1}$, where $g$ is the element defining $\alpha^\perp_\Z$ as an integral lattice (\ref{DEFI}). 
These cycles are naturally a weighted sum over sub-Shimura varieties of the same type. In the special case $K = \SO'(\nL_\Zh)$ and $\varphi$ is the characteristic function of a
coset in $(\nL_\Zh^*/\nL_\Zh) \otimes \nM_\Zh^*$, these correspond precisely to the orbits in the global orbit equation \ref{GLOBALORBITEQUATION}.
\end{PAR}

\begin{PAR}
We have the following relation between {\em values} at 0 of the orbit equation and volumes of special cycles. The equation says
\[ 4\lambda^{-1}(\nL_\Z; 0) \mu(\nL_\Z, \nM_\Z, \kappa; 0) = 4\sum_{\SO'(\nL_\Zh)\alpha \subset \nIsome(\nM, \nL)(\Af) \cap \kappa} \lambda^{-1}(\alpha^\perp_\Z; 0). \]
It is well known that $4\lambda^{-1}(\nL_\Z, 0)$ equals the volume of  $\nSh({}^{\SO'(\nL_\Zh)} \nO)(\C)$ w.r.t. the volume form  $\chern_1(\Xi^*\mathcal{E}, \Xi^* h)^{m-2}$ --- 
see e.g. \cite[Theorem 11.5.2, i.]{Thesis}. Similarly $4\lambda^{-1}(\alpha^\perp_\Z; 0)$ is the volume of  the sub-Shimura variety of the
special cycle.
Hence by definition of the special cycle (and canonicity of the construction $\Xi^*$), $\mu(\nL_\Z, \nM_\Z, \kappa; 0)$ 
is equal to the degree of the special cycle (w.r.t. $\Xi^*\mathcal{E}, \Xi^* h$) divided by the volume of the surrounding 
Shimura variety. More generally, it is possible to define the special cycles also for degenerate
quadratic forms on $\nM$ and the equality with the special values of the Fourier coefficients of the Eisenstein series extends to those. 
Therefore the special value of the Eisenstein series is a generating function for these degrees.  
In a lot of cases, it is known that in fact also their generating functions valued in cohomology or even Chow groups of the Shimura variety 
are modular. See the introduction to \cite{Thesis} and the references therein for more information on this. 
\end{PAR}

\begin{PAR}
According to the conjectures of Kudla and others, the first derivative of $\widetilde{\mu}$, or more generally of the full Fourier coefficient of the Eisenstein series \ref{DEFEISENSTEIN}, should be related to {\em heights} of the special cycles. In \cite[Theorem 11.5.2, ii., 11.5.5, 11.5.9]{Thesis}, we proved a partial result for the nonsingular coefficients in this direction:
\end{PAR}
\begin{SATZ}\label{HEIGHTSPECIAL}
Assume $m-n>1$.
Under a general technical assumption (cf. \ref{INTEGRALMODELS}), we have the following:
\begin{center}
\small
\begin{tabular}{c|c|c}
& $\nSh({}^{\SO'(\nL_\Zh)}\nO)$ & $\nZ(\nL_\Z, \nM_\Z, \kappa)$ \\
\hline
&&\\
\bf geometric& & \\
 \bf volume& $\prod_\nu  \widetilde{\lambda}_\nu^{-1}(\nL; 0)$ & $\prod_\nu  \widetilde{\lambda}_\nu^{-1}(\nL; 0)   \widetilde{\mu}_\nu(\nL, \nM, \kappa; 0)$ \\
 w.r.t. && \\
$\chern_1(\Xi^*\mathcal{E}_\C, \Xi^* h)$ && \\
\hline
&& \\
\bf arithmetic & & \\
\bf volume & $ \left. \frac{\dd}{\dd s} \prod_\nu  \widetilde{\lambda}_\nu^{-1}(\nL; s) \right|_{s=0}$  & $ \left. \frac{\dd}{\dd s} \prod_\nu \widetilde{\lambda}_\nu(\nL; s)   \widetilde{\mu}_\nu(\nL, \nM, \kappa; s) 
\right|_{s=0}$ \\
w.r.t & & \\
 $\achern_1(\Xi^*\mathcal{E}, \Xi^* h)$& {\footnotesize up to $\Q \log(p)$} & {\footnotesize up to $\Q \log(p)$ for $p | 2d(\nL)$ and } \\ 
& {\footnotesize for $p^2 | 4d(\nL)$.} & {\footnotesize $p$ such that $\nM_\Zp^*/\nM_\Zp$ is not cyclic.} \\ 
\end{tabular}
\end{center}
\end{SATZ}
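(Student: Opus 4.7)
The proof falls naturally into four parts matching the four entries of the table; the two geometric entries reduce to a Tamagawa-type volume computation combined with the orbit equation, while the two arithmetic entries require the integral model machinery of \cite[Main Theorems 4.3.5, 4.5.2]{Thesis} together with a reduction to the divisor case.

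For the geometric volume of $\nSh({}^{\SO'(\nL_\Zh)}\nO)$: pulled back to the uniformizing domain $\nX_\nO$, the form $\chern_1(\Xi^*\mathcal{E}_\C, \Xi^*h)^{m-2}$ is a specific multiple (fixed by the normalization of $h$ from \cite[Definition 11.3.1]{Thesis}) of the $\SO(\nL_\R)$-invariant top form. Hence its integral over the Shimura variety equals a Tamagawa-type volume of $[\SO(\nL_\Q)\backslash \SO(\nL_\A)/\SO'(\nL_\Zh)]$. By the lemma in section \ref{SECTGLOBALORBIT}, this factors as a convergent Euler product of the local volumes $\vol(\SO'(\nL_\Zp))$ and $\vol(\SO(\nL_\R))$; unwinding Definition \ref{DEFLAMBDAMU} identifies the result with $\prod_\nu \widetilde\lambda_\nu^{-1}(\nL_\Z; 0)$, up to the explicit constant $4$ noted in the text.

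For the geometric volume of the special cycle, I would apply the definition (\ref{SPECIALCYCLE}) as a weighted sum of sub-Shimura varieties $\nSh({}^{\SO'(\alpha^\perp_\Zh)}\nO(\alpha^\perp))$ of dimension $m-n-2$, use the functoriality of the construction $\Xi^*$ in morphisms of Shimura data to identify the restriction of $(\Xi^*\mathcal{E}, \Xi^*h)$ with the analogous bundle on each sub-variety, apply the first entry of the table to each, and finally collect the sum over orbits using the global orbit equation \ref{GLOBALORBITEQUATION}. For the arithmetic volume of $\nSh({}^{\SO'(\nL_\Zh)}\nO)$ one invokes the result of Kudla \cite{Kudla4}, resp. Bruinier--K\"uhn \cite{BK}, relating the arithmetic top self-intersection of $\achern_1(\Xi^*\mathcal{E}, \Xi^*h)$ to the derivative of the appropriate Eisenstein series at $s=0$. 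The Eisenstein series in this case is the $n=0$ degenerate case interpolating $\widetilde\lambda$; comparison with the explicit formulas of Theorem \ref{EXPLIZIT} identifies $\frac{d}{ds}\prod_\nu \widetilde\lambda_\nu^{-1}(\nL_\Z; s)|_{s=0}$ with the arithmetic volume modulo $\Q\log(p)$-contributions exactly at the primes $p$ with $p^2 \mid 4d(\nL)$, which is where the local Euler factor ceases to be unambiguously pinned down by the local orbit equation.

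The hard part, and the main obstacle, is the arithmetic volume of the special cycle. I propose induction on $n = \dim(\nM)$. The base case $n=1$ uses Borcherds products: for a suitable weakly holomorphic input form one obtains a meromorphic section of a tensor power of $\Xi^*\mathcal{E}$ whose divisor is (a linear combination of) the special divisors and whose Petersson metric has an explicit formula. The arithmetic degree of the section then splits into (i) an Archimedean integral, computed by Borcherds' formula and matching the Archimedean derivative of $\widetilde\mu$ via the interpretation (\ref{GLOBALEISENSTEIN}) of the limit of the Eisenstein series in Theorem \ref{SATZREPDENSARCHIMEDIANINTERPOLATION}, and (ii) a finite-adelic contribution that one matches with $\sum_p \frac{d}{ds}\widetilde\mu_p(\nL_\Zp, \nM_\Zp, \kappa; s)|_{s=0}$ up to $\Q\log(p)$ precisely at primes where $\nM_\Zp^*/\nM_\Zp$ is non-cyclic (so that several local orbits contribute and the orbit equation is not exact modulo $\Q\log(p)$ there). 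For the inductive step, choose a codimension-one orthogonal splitting $\nM = \nM' \perp \nM''$ and apply the global Kitaoka formula (Theorem \ref{GLOBALKITAOKA}) to reduce the arithmetic degree on $\nZ(\nL_\Z, \nM_\Z, \kappa)$ geometrically to a sum over orbits of arithmetic degrees on the intermediate cycles $\nZ(\nL_\Z, \nM'_\Z, \kappa')$, each equipped with its own special sub-cycle of type $\nM''$. Differentiating both sides of Theorem \ref{GLOBALKITAOKA} at $s=0$ and matching term by term (using the inductive hypothesis for the sub-Shimura varieties $\nSh({}^{\SO'(\alpha^\perp_\Zh)}\nO(\alpha^\perp))$) gives the required identity, with the bad-prime error tracking exactly as asserted. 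Throughout one must invoke \cite[Conjecture 4.3.2]{Thesis} to have the integral models, Hermitian bundles and intersection theory available on each sub-Shimura variety entering the induction.
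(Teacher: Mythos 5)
Your treatment of the two geometric entries is essentially correct and matches what the paper indicates: the geometric volume of $\nSh({}^{\SO'(\nL_\Zh)}\nO)$ is a Tamagawa-type computation pinned down by the normalization of $h$, and the cycle's degree follows from functoriality of $\Xi^*$ together with the $s=0$ value of the orbit equation. Your reduction of the cycle's arithmetic volume from general $n$ to the $n=1$ case via the global Kitaoka formula (Theorem~\ref{GLOBALKITAOKA}) is also the route the paper describes.

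The gap is in your entry for the arithmetic volume of $\nSh({}^{\SO'(\nL_\Zh)}\nO)$. You propose to quote Kudla \cite{Kudla4}, resp.\ Bruinier--K\"uhn \cite{BK}, as directly ``relating the arithmetic top self-intersection of $\achern_1(\Xi^*\mathcal{E},\Xi^*h)$ to the derivative of the appropriate Eisenstein series'', and then to compare with Theorem~\ref{EXPLIZIT}. But those references provide only the \emph{Archimedean} ingredient: they compute $\int_{\nSh(\C)} \log h(F)\,\chern_1(\Xi^*\mathcal{E},\Xi^*h)^{m-2}$ for a Borcherds form $F$ and identify it (via \ref{GLOBALEISENSTEIN}) with $-4\widetilde\lambda^{-1}(\nL;0)\widetilde\mu'(\nL,<-k>,\kappa;0)$; they do not compute the arithmetic self-intersection of $\achern_1$ on the integral model. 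Moreover $\widetilde\lambda$ is not a Fourier coefficient of an Eisenstein series, so there is no ``$n=0$ degenerate case'' to invoke, and Theorem~\ref{EXPLIZIT} gives combinatorial formulas for the Whittaker polynomials, not intersection numbers.

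As the discussion in \ref{KUDLA} makes clear, the arithmetic volume of $\nSh$ is obtained \emph{jointly} with the height of the $n=1$ special divisor from a single Arakelov relation for a Borcherds form $F$,
\[
\adeg\bigl(\widehat{\Div}(F)\cdot\achern_1(\Xi^*\mathcal{E},\Xi^*h)^{m-2}\bigr) = \adeg\bigl(\achern_1(\Xi^*\mathcal{E},\Xi^*h)|_{\overline{\nZ}}^{m-2}\bigr) + \int_{\nSh(\C)} \log h(F)\,\chern_1(\Xi^*\mathcal{E},\Xi^*h)^{m-2},
\]
in which $\widehat{\Div}(F)=\widetilde\mu(\nL,<-k>,\kappa;0)\,\achern_1(\Xi^*\mathcal{E},\Xi^*h)$ (so the left side is $\widetilde\mu(\cdot;0)\,\adeg(\achern_1^{m-1})$), the Archimedean term is Kudla/BK, and the height term decomposes by functoriality of $\Xi^*$ into arithmetic volumes of the sub-Shimura varieties of dimension $m-3$. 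Therefore the argument is an induction on $m$: given the arithmetic volume of the smaller models, the Arakelov identity combined with the derivative of the orbit equation (Theorem~\ref{GLOBALORBITEQUATION}) yields both $\adeg(\achern_1^{m-1}) = 4(\widetilde\lambda^{-1})'(\nL;0)$ and the height of the special divisor simultaneously. The base case is $m=3$ (modular and Shimura curves, where the sub-Shimura varieties are CM points and Kronecker's limit formula does the work, cf.\ \ref{SCHEITERN}). The $\Q\log(p)$-ambiguities are tracked through Theorems~\ref{GLOBALORBITEQUATION} and~\ref{GLOBALKITAOKA} together with the restriction of the canonical-integral-model construction to $\Z[1/2D]$, which is why the two conditions on $p$ differ between the two columns. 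You should replace your step~3 by this induction on $m$ and fold your step~4 base case into it; your attribution of the errors to ``the local Euler factor not being pinned down by the local orbit equation'' is also too vague to be accurate.
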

Here, in the first line, we repeated the well-known facts for the case of the geometric volume, mentioned above, for the sake of comparison.

Note that $\widetilde{\mu}(\nL_\Z, \nM_\Z, \kappa; s)$ is the ``holomorphic'' part of the Fourier coefficient of the Eisenstein series \ref{DEFEISENSTEIN}. 
It is conjectured, that if one equips the special cycles with Kudla-Millson's Greens functions \cite{KuMi1, KuMi2} and extends them (possibly in a nontrivial way, i.e. not by simply taking its Zariski closure) to the integral model, the generating series of their classes $[\widehat{\nZ}(\nL_\Z, \nM_\Z, \kappa, y)]$ in appropriate arithmetic Chow groups should be modular forms, whose product with $\achern_1(\Xi^*\mathcal{E}, \Xi^* h)^{m-n-1}$ should yield the {\em full} first derivative of the Eisenstein series. 
The Greens functions depend on $y \in (\nM \otimes \nM)_\R^s$ (the imaginary part of the argument of the Eisenstein series as classical modular form), too.

Whereas $\achern_1(\Xi^*\mathcal{E}, \Xi^* h)$ is an object in 
an already developed \cite{BKK1, BKK2} extended Arakelov theory allowing additional boundary singularities of the occurring Greens function, this is not yet clear for these classes $[\widehat{\nZ}(\nL_\Z, \nM_\Z, \kappa, y)]$. In any case, the additional ingredient would be to prove that the integral of the Kudla-Millson Greens function is given 
by the special derivative of the ``nonholomorphic part'' of the Fourier coefficient of the Eisenstein series in the following way:

\begin{gather*}  
\int_{\nSh({}^K\nO)(\C)} \mathfrak{g}(Q, y) \chern_1(\Xi^* \mathcal{E}, \Xi^* h)^{m-n-1}  \\
= \vol(\nZ(\nL_\Z, \nM_\Z^Q, \kappa; K)) \frac{\dd}{\dd s} \left. \mu_{nh}(\nL_\Z, \nM_\Z^Q, \kappa; y, s) \right|_{s=0},
\end{gather*}
where $\mu_{nh}(\nL_\Z, \nM_\Z^Q, \kappa; y, s)$, the ``non-holomorphic'' part of the Fourier coefficient of the Eisenstein series is determined by
\begin{gather*} 
|\alpha|^{-\frac{m}{2}} e^{2\pi \alpha^! Q \cdot \gamma_0^{-1}} E_{Q}(\Psi_\infty \Phi(\chi_\kappa);  g_l(\alpha), s-s_0) \\
= \widetilde{\mu}(\nL_\Z, \nM_\Z^Q, \kappa; s) \mu_{nh}(\nL_\Z, \nM_\Z^Q, \kappa;  \alpha \gamma_0^{-1} ({}^t \alpha^{-1}), s).
\end{gather*}
Its value at $s=0$ is equal to 1, independently of $\alpha$ (\ref{GLOBALEISENSTEIN}). See e.g. \cite[Proposition 12.1]{KRY2} for a calculation of this kind for Shimura curves, which correspond to signature (1,2), Witt rank 0.

At primes, where the Zariski closure of the special cycle itself consists of canonical integral models of Shimura varieties, the height is equal (again by the compatibility of the $\Xi^*$ construction with embeddings) to the (degree of the) highest power of $\achern_1(\Xi^*\mathcal{E}, \Xi^* h)$ computed on these various smaller models. 

By Theorem \ref{HEIGHTSPECIAL}, this ``self-intersection number'' or ``arithmetic volume'' of a Shimura variety associated with a lattice $\nL'_\Z$ and its discriminant kernel
is given by the first derivative at $s=0$ of 
\[ 4\widetilde{\lambda}^{-1}(\nL_\Z'; s)   \]
up to multiples of $\log(p)$ dividing the discriminant of $\nL'_\Z$ and $p=2$. For induced (but maybe not canonical in any reasonable sense) models one can extend this
to be up to multiples of those $\log(p)$, where $p^2$ divides the discriminant.
Obviously, the orbit equation relates this --- applied to the individual sub-Shimura varieties in a special cycle --- directly to the height (or geometric volume) of the special cycle.
We emphasize, that the contribution of the RHS of the orbit equation {\em for very bad primes} is {\em not} conjectured to be related to the decomposition into sub-Shimura varieties.
This is already wrong in the case of the modular curve (Shimura variety associated with the lattice considered in section \ref{MODULARCURVE}) as we will illustrate in \ref{SCHEITERN} below.

\begin{PAR}\label{KUDLA}
For $n=1$, the global orbit equation at $s=0$ and its derivative can be understood in terms of Borcherds products (and in the arithmetic case this is the key to the proof of \ref{HEIGHTSPECIAL}!). This idea was
first used in \cite{BBK}.
Recall that a Borcherds product is a meromorphic modular form on $\nSh({}^K \nO)(\C)$ having singularities precisely in the (codimension $n=1$) special cycles. It is a multiplicative lift of an integral vector valued modular form $f$ of weight $1-\frac{m}{2}$ holomorphic in $\HH$ and meromorphic in the cusp $i \infty$ for the Weil representation 
of $\Sp_2'(\Z)$ restricted to $\C[\nL_\Zh^*/\nL_\Zh]$. Such an $f$ has a Fourier expansion
\[ f = \sum_{k \in \Q_{\gg -\infty}} a_k q^k, \]
where only $a_k$`s with $k$ having bounded denominator actually occur and $a_k \in \Z[\nL_\Zh^*/\nL_\Zh].$ The divisor of $F$ (on the uncompactified Shimura variety) is given by $\sum_{k<0} \nZ(\nL_\Z, <-k>, a_k)$ and its weight is equal to $a_0(0)$.

The geometric case is now ``explained'' as follows: Using Serre duality one can show, that the nonpositive Fourier coefficients have to satisfy the equation
\[ \sum_{k \in \Q_{\le 0}} a_k b_{-k} = 0 \]
for all {\em holomorphic} forms $\sum_{k \in \Q_{\ge 0}} b_k q^k$ of weight $\frac{m}{2}$ for the dual of the Weil representation restricted to $\C[\nL_\Zh^*/\nL_\Zh]$.
The Eisenstein series (see \ref{GLOBALEISENSTEIN}), whose special value is holomorphic (assuming $m\ge 4$), yields the relation
\[ c_0(0) = \sum_{k<0} \mu(\nL_\Z, <-k>, a_k; 0). \]
If we assume for the moment that there is only one nonzero $a_k, k<0$ equal to the characteristic function of $\kappa$, we may confirm that 
$\mu(\nL_\Z, <-k>, \kappa; 0)$ gives the relative degree of $\nZ(\nL_\Z, <-k>, \kappa)$ w.r.t. $\Xi^*\mathcal{E}, \Xi^* h$ (modulo investigations at the boundary).

The arithmetic case is ``explained'' as follows: We assume again for simplicity that there is only one nonzero $a_k, k<0$ equal to the characteristic function of a $\kappa$.
In \cite[Theorem 11.3.11]{Thesis} we showed (using Borcherds' product expansion and an abstract integral $q$-expansion principle for automorphic vector bundles) that for good $p$ these Borcherds forms are actually rational sections of the bundle $\Xi^* \mathcal{E}$ on any smooth compactified integral model $\nSh({}^K_\nRPCD \nO)$, having no connected (=irreducible) fibre above $p$ in their divisor. Their divisor hence is supported on the Zariski closures of the special divisiors and possibly on the boundary. 
The derivative of the orbit equation now reads as follows:
\begin{gather*}
4(\widetilde{\lambda}^{-1})'(\nL_\Z; 0) \widetilde{\mu}(\nL_\Z, <-k>, \kappa; 0) + 4\widetilde{\lambda}^{-1}(\nL_\Z; 0) \widetilde{\mu}'(\nL_\Z, <-k>, \kappa; 0) \\
= 4\sum_{\SO'(\nL_\Zh) \alpha \subset \nIsome(<Q>, \nL)(\Af) \cap \kappa } (\widetilde{\lambda}^{-1})'(\alpha^\perp_\Z; 0).
\end{gather*}
$4(\widetilde{\lambda}^{-1})'(\alpha^\perp_\Z; 0)$
is interpreted as the arithmetic volume/height of the sub-Shimura variety in the special divisor corresponding to the orbit of $\alpha$. Hence the value of the equation is the height of the special divisor (cf. Theorem \ref{HEIGHTSPECIAL}). 
\[ 4(\widetilde{\lambda}^{-1})'(\nL_\Z; 0) \widetilde{\mu}(\nL_\Z, <-k>, \kappa; 0) \]
is the arithmetic volume of the surrounding Shimura variety multiplied by the weight of $F$, and finally, according to the computation of Kudla and Bruinier/K\"uhn (compare \ref{GLOBALEISENSTEIN}
with \cite[Theorem 2.12 (ii)]{Kudla4} or use \cite[Proposition 3.2, Theorem 4.11]{BK} and \ref{REPDENS})
\[ -4\widetilde{\lambda}^{-1}(\nL_\Z; 0) \widetilde{\mu}'(\nL_\Z, <-k>, \kappa; 0) \] 
is equal to the integral of the logarithm of the Hermitian norm of $F$ over $\nSh({}^K \nO)(\C)$ (taking into account, that $4\widetilde{\lambda}^{-1}(\nL_\Z; 0)$ is the volume of $\nSh({}^K \nO)(\C)$). The equation hence (again modulo --- considerably difficult --- investigations at the boundary) expresses the following standard relation in Arakelov geometry:
\begin{eqnarray*} 
\adeg(\widehat{\Div}(F) \cdot \achern_1(\Xi^* \mathcal{E}, \Xi^* h)^{m-2}) &=& \adeg(\achern_1((\Xi^* \mathcal{E}, \Xi^* h)|_{\overline{\nZ(\nL_\Z, <-k>, \kappa)}})^{m-2}) \\
&+& \int_{\nSh({}^K \nO)(\C)} \log h(F) \chern_1(\Xi^* \mathcal{E}, \Xi^* h)^{m-2}.
\end{eqnarray*}
Observe $\widehat{\Div}(F) = \mu(\nL_\Z, <-k>, \kappa; 0) \achern_1(\Xi^* \mathcal{E}, \Xi^* h)$, so the LHS is equal to $\mu(\nL_\Z, <-k>, \kappa; 0) \adeg( \achern_1(\Xi^* \mathcal{E}, \Xi^* h)^{m-1} )$.
All values are understood in $\R$ modulo contribution from $\log(p)$ for $p$ dividing the discriminant of $\nL_\Z$ (not $\nM_\Z=<-k>$!) and for $p=2$.
\end{PAR}

\begin{PAR}
Consider a decomposition $\nM_\Z = \nM_\Z' \perp \nM_\Z''$.
The derivative of the global Kitaoka formula \ref{GLOBALKITAOKA} at $s=0$, which reads 
\[  4\widetilde{\lambda}^{-1}(\nL_\Z; s) \widetilde{\mu}(\nL_\Z, \nM_\Z, \kappa; s) = 4
\sum_{\substack{\SO'(\nL_\Zh)\alpha \subset \nIsome(\nM', \nL)(\Af) \cap \kappa' \\ \kappa'' \cap \alpha_\Af^\perp \otimes (\nM_\Af'')^* \not= \emptyset}} \widetilde{\lambda}^{-1}(\alpha^\perp_\Z; s) \widetilde{\mu}^{-1}(\alpha^\perp_\Z, \nM'', \kappa''; s)  \]
just reflects (in view of Theorem \ref{HEIGHTSPECIAL}) the following elementary equality of cycles:
\[ \nZ(\nL_\Z, \nM_\Z, \kappa) = \sum_{\substack{\SO'(\nL_\Zh)\alpha \subset \nIsome(\nM', \nL)(\Af) \cap \kappa' \\ \kappa'' \cap \alpha_\Af^\perp  \otimes (\nM_\Af'')^* \not= \emptyset}} \nZ(\alpha^\perp_\Z, \nM_\Z'', \kappa'').  \]
\end{PAR}

\begin{PAR}\label{SCHEITERN}
In the case of the modular curve, i.e. if $\nL$ is the lattice discussed in section \ref{MODULARCURVE}, we have explicitly by Kronecker's limit formula:
\[ 2 E(S; s+1) = \vol(\nSh({}^{K'}\nO(S^\perp)) + \hght(\nSh({}^{K'}\nO(S^\perp))) s + O(s^2) \]
(where the $K'$ denote the respective discriminant kernels $\SO'(S^\perp_\Zh)$) and in the sum over all $\SO'(\nL_\Zh)$ orbits:
\[ 2E(<q>, \kappa; s+1) = \vol(\nZ(\nL_\Z, <q>, \kappa)) + \hght(\nZ(\nL_\Z, <q>, \kappa)) s + O(s^2) \]
Therefore, Theorem \ref{HEIGHTSPECIAL} in this case (its geometric part is then roughly the classical class number formula) follows from 
Theorem \ref{TRACESOFEISENSTEIN}, 5. Observe, that the latter identity is an identity of functions in $s$, 
not only an identity of the first 2 Taylor coefficients! Therefore one might ask, if there is any arithmetic or $K$-theoretic ``explanation'' of
the equality of the higher terms in the Taylor expansion, too.
Observe furthermore that the derivative of $E(S; s+1)$ at $s=0$ is
{\em not} related in any simple way to the expansion of $4\lambda^{-1}(S^\perp_\Z; s)$ at $s=0$, if $S^\perp$ has 
nonsquarefree discriminant (cf. also \ref{SCHEITERN2}). Therefore one cannot expect a simple direct relation of the derivative of the orbit equation in this naive form 
to Arakelov geometry which is true without any restriction. 
\end{PAR}

\begin{PAR}
According to Theorem \ref{HEIGHTSPECIAL}, the value, resp. first derivative of $4\widetilde{\lambda}^{-1}(\nL_\Z)$ encode the geometric, resp. arithmetic volume of 
the orthogonal Shimura varieties associated with $\nL_\Z$ and its discriminant kernel, in the second case up to rational multiples of $\log(p)$ with either $p=2$ or
a prime with $p^2 | D(\nL_\Z)$. We will illustrate the computation of geometric and arithmetic volume of several Shimura varieties associated with lattices with square-free discriminant (outside 2) in the rest of this section.
We use the explicit computation (\ref{EXPLIZIT}, \ref{EXPLIZIT2}) and bring them to a
form involving derivatives of $L$-series at negative integers as it is common in the literature. 
\end{PAR}

\begin{BEISPIEL}[Heegner points]\label{BEISPIEL2DIM}
Let $\nL_\Z$ be a {\em two dimensional} negative definite lattice with square-free discriminant $D>0$ (in particular $2\nmid D$ and $-D$ is automatically fundamental). We have
\begin{eqnarray*}
\widetilde{\lambda}^{-1}(\nL_\Z; s) &=&  L(\chi_{-D}, 0) \\
 && +L(\chi_{-D}, 0) \cdot \left(-\frac{L'(\chi_{-D}, 0)}{L(\chi_{-D}, 0)} + \frac{1}{2}C - \frac{1}{2}\log(D) + \frac{1}{2}\log(2) \right) s \\
 && +O(s^2)
\end{eqnarray*}
\end{BEISPIEL}

\begin{BEISPIEL}[Modular curves]\label{BEISPIELMODULARCURVE}
Let $\nL_\Z$ be a {\em three dimensional} lattice with form $x_1 x_2 - \varepsilon x_3^2$, $\varepsilon$ square free, $2\nmid \varepsilon$. The
discrimiant $D$ is $2\varepsilon$.
\begin{eqnarray*}
 \widetilde{\lambda}^{-1}(\nL_\Z; s) &=& -\frac{1}{2}\zeta(-1)\prod_{p|\varepsilon} (p+1) - \frac{1}{2}\zeta(-1) \cdot\\
 && \cdot\prod_{p|\varepsilon} (p+1) \left(-2\frac{\zeta'(-1)}{\zeta(-1)}+\frac{1}{2}\sum_{p|\varepsilon}\frac{p-1}{p+1}\log(p)-1+C + \frac{1}{2}\log(2)\right) s \\
 && +O(s^2)
\end{eqnarray*}
\end{BEISPIEL}

\begin{BEISPIEL}[Shimura curves]\label{SHIMURACURVE}
Let $\nL_\Z$ be a {\em three dimensional} lattice again with discriminant $D=2\varepsilon$, $2\nmid \varepsilon$, $\varepsilon$ square-free, and assume that the form is {\em anisotropic at all $p | \varepsilon$}.
\begin{eqnarray*}
 \widetilde{\lambda}^{-1}(\nL_\Z; s) &=& -\frac{1}{2}\zeta(-1)\prod_{p|\varepsilon} (p-1) - \frac{1}{2}\zeta(-1)\cdot \\
 && \cdot \prod_{p|\varepsilon} (p-1)\left(-2\frac{\zeta'(-1)}{\zeta(-1)}+ 
\frac{1}{2}\sum_{p|\varepsilon}\frac{p+1}{p-1}\log(p)-1+C + \frac{1}{2}\log(2) \right) s \\
 && +O(s^2)
\end{eqnarray*}
\end{BEISPIEL}

\begin{BEISPIEL}[Hilbert modular surfaces]
Let $\nL_\Z$ a {\em four dimensional} lattice, being an orthogonal direct sum of a two dimensional indefinite of discriminant $D<0$,
square free and a hyperbolic plane. The we have:
\begin{eqnarray*}
 \widetilde{\lambda}^{-1}(\nL_\Z; s) &=& \frac{1}{4}\zeta(-1)L(\chi_{-D}, -1)  \\
 && +\frac{1}{4}\zeta(-1)L(\chi_{-D}, -1)\cdot \\
 && \cdot \frac{1}{4}\left(-2\frac{\zeta'(-1)}{\zeta(-1)}-\frac{L'(\chi_{-D}, -1)}{L(\chi_{-D}, -1)}-\frac{3}{2}-\frac{1}{2}\log(D)+ \frac{3}{2}C + \frac{1}{2} \log(2) \right) s \\
 && + O(s^2)
\end{eqnarray*}
\end{BEISPIEL}

\begin{BEISPIEL}[Siegel threefolds]
Let $\nL_\Z$ be a {\em five dimensional} lattice, which is the orthogonal sum of the negative of a three dimensional as in example~
(\ref{SHIMURACURVE}) and a hyperbolic plane. Let $D=2\varepsilon > 0$ be the discriminant of $\nL_\Z$
\begin{eqnarray*}
 \widetilde{\lambda}^{-1}(\nL_\Z; s) &=& -\frac{1}{4}\zeta(-1)\zeta(-3)\prod_{p|D}(p^2-1) -\frac{1}{4}\zeta(-1)\zeta(-3)\prod_{p|D}(p^2-1)\cdot \\
 && \cdot \left(-2\frac{\zeta'(-1)}{\zeta(-1)}-2\frac{\zeta'(-3)}{\zeta(-3)}+\frac{1}{2}\sum_{p | D}\frac{p^2+1}{p^2-1}\log(p) - \frac{17}{6} + 2C+\frac{1}{2}\log(2) \right) s \\
 && +O(s^2)
\end{eqnarray*}
\end{BEISPIEL}

\begin{BEISPIEL}[A 10-dimensional Shimura variety] Especially simple is the situation for a
Shimura variety of orthogonal type associated with an unimodular lattice (this has good reduction everywhere, except possibly $p=2$).
Let for example $\nL_\Z$ be the orthogonal direct sum of a positive definite $E_8$-lattice
and 2 hyperbolic planes. Here we get:
\begin{eqnarray*}
 \lambda^{-1}(\nL_\Z; s) &=& \frac{1}{16}\zeta(-1)\zeta(-3)\zeta^2(-5)\zeta(-7)\zeta(-9) \\
 && + \frac{1}{16}\zeta(-1)\zeta(-3)\zeta^2(-5)\zeta(-7)\zeta(-9) \cdot \left( -2\frac{\zeta'(-1)}{\zeta(-1)}-2\frac{\zeta'(-3)}{\zeta(-3)}\right. \\
 && \left.-3\frac{\zeta'(-5)}{\zeta(-5)}-2\frac{\zeta'(-7)}{\zeta(-7)}-2\frac{\zeta'(-9)}{\zeta(-9)}-\frac{14717}{1260}+\frac{11}{2}C \right) s \\
 &+& O(s^2)
\end{eqnarray*}
(and here this coincides with $\widetilde{\lambda}^{-1}$).
\end{BEISPIEL}

\section{The example $ac-b^2$}\label{MODULARCURVE}

\begin{PAR}
In this section, we investigate the special case of the following $\Z$-lattice:

\[ \nL_\Z =  \{ A \in M_2(\Z) \where {}^t A = A \} \]
with quadratic form $Q: \nL_\Z \rightarrow \Z$, $A \mapsto \det(A)$. It has discriminant 2 and signature (1,2).
The group scheme $\GL_2$ acts on $\nL$ by conjugation. This identifies $\PGL_2$ with the special orthogonal group scheme $\SO(\nL)$ of the lattice.
The discriminant kernel $\SO'(\nL_\Z)$ is equal to the special orthogonal group $\SO(\nL_\Z)$.
\end{PAR}

\begin{PAR}\label{normalizationS}
Via $\perp$, maximal negative definite sublattices $N \cap \nL_\Z$ correspond bijectively to vectors 
$S = \left( \begin{matrix} a&b\\ b&c  \end{matrix}\right)$ of positive length
$ac-b^2>0$ with $a<0, c<0$, $2|a, 2|c$ and $(\frac{a}{2},b,\frac{c}{2})=1$. They correspond also
to 2 complex vectors of length 0
\[ Z_{\tau^\pm} = \left( \begin{matrix} 1 &\tau^\pm\\\tau^\pm&(\tau^\pm)^2 \end{matrix}\right), \]
too, with $\tau^+ \in \HH, \tau^- = \overline{\tau^+}$, by virtue of

\begin{gather*}
 a (\tau^\pm)^2 - 2b \tau^\pm + c = 0 , \text{i.e. } \tau^\pm = \frac{b \pm i\sqrt{\det(S)}}{a} \\
 N_\tau = Z_{\tau^+} \oplus Z_{\tau^-} = \R \M{\frac{a}{2}&0\\0&-\frac{c}{2}} + \R \M{b&\frac{c}{2}\\\frac{c}{2}&0}
\end{gather*}
\end{PAR}

\begin{BEM}
The lattice $N_\tau \cap \nL_\Z = S^\perp_\Z$ is not necessarily equivalent to the lattice $\Z^2$ with form $x \mapsto \frac{1}{2} {}^t x S x$. 
However, the discriminant of $N_\tau \cap \nL_\Z$ is equal to $\det(S)$ if $S$ satisfies the 
conditions of \ref{normalizationS} (the generators of $N$ given above span a lattice of discriminant $\frac{c^2}{4}\det(S)$ and of index $\frac{c}{2}$ in a primitive one).
\end{BEM}

\begin{PAR}
Let
\[ E'(h,s) = \sum_{g \in P(\Q) \backslash \GL_2(\Q) } \Psi(gh)(s) \]
be the standard Eisenstein series of weight 0. Here $\Psi=\prod_\nu \Psi_\nu$, the $\Psi_\nu$s are the standard sections:
\[ \Psi_\nu(\M{\alpha_1& \\ & \alpha_2}u(\beta)k) = |\alpha_1|_\nu^{s} |\alpha_2|_\nu^{-s}, \]
however with different normalization than in (\ref{EISENSTEIN}).
We have 
\[ E'(g_\tau,s) = \frac{1}{2} \sum_{g \in \Gamma_\infty \backslash \SL_2(\Z) } |\Im(g \circ \tau)|^s =
\frac{1}{2} \sum_{(m,n)\in \Z \atop \ggT(n,m)=1} \frac{|y|^s}{|m\tau+n|^{2s}} \]
for $g_{x + yi} = \M{y & x \\ 0 & 1}$ and $\Gamma_\infty = \{ \M{1&x\\ & 1} \where x \in \Z \}$.

The Eisenstein series (in this normalization) satisfies the functional equation \cite[Theorem 1.6.1]{Bump}:
\[  Z(2s) E'(g, s) = Z(2-2s) E'(g, 1-s). \]
where $Z(s) = \zeta(s)\Gamma(\frac{1}{2}s)\pi^{-\frac{1}{2}s}$ is the normalized Riemann zeta function, satisfying $Z(s)=Z(1-s)$.
We normalize the Eisenstein series as follows:
\[ E(z; s) := \frac{Z(2s)}{Z(s)}E'(z; s). \]

For each integer $q$ and $\kappa\in \nL_\Zh^*/\nL_\Zh$, we consider
the decomposition of $\nIsome(<q>, \nL)(\Af) \cap \kappa$ into orbits $\SO'(\nL_\Zh)S$ (w.l.o.g. $S \in \nL_\Q$).  
We are interested in the `partial traces': 
\[ E(S, s) := \sum_{g_f \in \SO(S^\perp) \backslash \SO(S^\perp_\Af) / \SO'(S^\perp_\Zh)} \frac{1}{\#\SO'(S^\perp_\Z)} E(g_{\tau} g_f, s) \]
(here $\SO(S^\perp_\Q)$ is a torus in $\SO(\nL_\Q)$ and $\tau \in \HH$ is such that $Z_\tau \perp S$, observe $E(g_{\tau} g_f, s)=E(g_{\overline{\tau}} g_f, s)$) and in the
`complete traces':
\[ E(<q>, \kappa; s) := \sum_{\SO'(\nL_\Zh)S \subset \nIsome(<q>, \nL)(\Af) \cap \kappa} E(S; s). \]
Since $\nL_\Z$ has class number 1, w.l.o.g. $S \in \nIsome(<q>, \nL)(\Q)$ and the latter can also be written as follows:
\[ E(<q>, \kappa; s) = \sum_{\SO(\nL_\Z)S \subset \nIsome(<q>, \nL)(\Q) \cap \kappa } \frac{1}{\#\SO'(S^\perp_\Z)} E(g_{\tau}; s).  \]

The aim of this section is to prove the following (well-known)
\end{PAR}

\begin{SATZ}\label{TRACESOFEISENSTEIN}
\begin{eqnarray*}
E(S, s) &=& \prod_\nu E_\nu(S, s), \\
E(<q>, \kappa; s) &=& \prod_\nu E_\nu(<q>, \kappa; s), 
\end{eqnarray*}
where $E_\infty(<q>, \kappa; s) = E_\infty(S, s)$ is independent of $S \in \nIsome(<q>, \nL)(\Q)\cap \kappa$ and
$E_p(<q>, \kappa; s)$ is defined by the orbit equation:
\begin{equation}\label{trueorbitequation} 
E_p(<q>, \kappa; s) := \sum_{\SO'(\nL_\Zp)S_p \subset \nIsome(<q>, \nL)(\Qp) \cap \kappa_p } E_p(S_p, s). 
\end{equation}
We have explicitly $E_p(S, s) = |D(S^\perp_\Zp)|_p^{-\frac{s}{2}}\frac{\zeta_p(S^\perp_\Zp; s)}{\zeta_p(s)}$, where 
$\zeta_p(S^\perp_\Zp, s)$ is the normalized zeta function (cf. Definition \ref{localzeta}) of the lattice $S^\perp_\Zp$ and $\zeta_p(s) = \frac{1}{1-p^{-s}}$.
\end{SATZ}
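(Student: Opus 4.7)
The strategy is the classical unfolding argument, in the spirit of Hecke's formula expressing sums of Eisenstein series at CM points in terms of Dedekind zeta functions of quadratic orders. First I would rewrite the partial trace $E(S,s)$ as an adelic period integral. Set $T := \SO(S^\perp) \subset \SO(\nL) = \PGL_2$. The key observation is that $T(\R)$ fixes $\tau$: indeed $T$ preserves the decomposition $S^\perp_\C = \C Z_\tau \oplus \C Z_{\overline{\tau}}$ and acts by inverse characters on the two lines. Hence $E(g_\tau t_\infty g_f, s) = E(g_\tau g_f, s)$ for all $t_\infty \in T(\R)$, and the defining sum of $E(S,s)$ collapses to
\[ E(S,s) \;=\; \frac{1}{\vol(T(\R))\,\vol(\SO'(S^\perp_\Zh))}\int_{T(\Q)\backslash T(\A)} E(g_\tau t,s)\,dt, \]
with the factor $\#\SO'(S^\perp_\Z)$ in the definition of $E(S,s)$ absorbing the $T(\Q)$-stabilizer.

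Next I would unfold the Eisenstein series. For $\re(s) \gg 0$ I substitute $E'(g,s) = \sum_{\gamma \in P(\Q)\backslash\SL_2(\Q)} \Psi(\gamma g, s)$ and swap sum and integral. Since $T$ is anisotropic, its $\Q$-rational action on $P(\Q)\backslash\SL_2(\Q) \cong \PP^1(\Q)$ is free with a single orbit; hence $P(\Q)\backslash\SL_2(\Q)/T(\Q)$ reduces to a point, and the period integral collapses to a single adelic integral $\int_{T(\A)} \Psi(w g_\tau t, s)\,dt$ (for a fixed representative $w$). As both $\Psi$ and the Haar measure on $T(\A)$ are pure tensor products over places, this factors as $\prod_\nu E_\nu(S,s)$ with $E_\nu(S,s) = \int_{T(\Q_\nu)} \Psi_\nu(w g_\tau t, s)\,dt$, proving the first factorisation.

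For the local identification $E_p(S,s) = |D(S^\perp_\Zp)|_p^{-s/2}\,\zeta_p(S^\perp_\Zp;s)/\zeta_p(s)$ I would parametrise $T(\Qp) \cong K_p^\times/\Qp^\times$ with $K_p = \Qp[\sqrt{-\det S}]$, and translate the local integral into an integral over $S^\perp_\Qp$ of a power of $|Q_\nL|$. Applying formula (\ref{SELTSAMEFORMEL3}) of Lemma \ref{REPDENS} to evaluate $\int_{S^\perp_\Zp} |Q_\nL(v)|^{s-1}\,dv$ in terms of the local representation density, and comparing with Definition \ref{localzeta} of the normalised zeta function, then yields the stated explicit formula.

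For the complete trace I write $E(<q>,\kappa;s) = \sum_{\SO'(\nL_\Zh) S} E(S,s) = \sum_S \prod_\nu E_\nu(S,s)$ and invoke two observations. First, $E_\infty(S,s)$ depends only on $q$, because all $S$ of fixed length $q$ in $\kappa$ are $\SO(\nL_\Q)$-conjugate by Witt's theorem and the Eisenstein series is $\SL_2(\Q)$-invariant. Second, global $\SO'(\nL_\Zh)$-orbits in $\nIsome(<q>,\nL)(\Af)\cap\kappa$ are exactly products over $p$ of local $\SO'(\nL_\Zp)$-orbits; reindexing by local orbits and using the definition (\ref{trueorbitequation}) produces $\prod_\nu E_\nu(<q>,\kappa;s)$. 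The principal obstacle is the local identification in the third step: verifying that the normalisation of the local torus integral precisely matches $\zeta_p(S^\perp_\Zp;s)/\zeta_p(s)$ together with the claimed discriminant factor. This amounts to a careful combination of Tate's local functional equation for $K_p/\Qp$ with the explicit computations of section \ref{SECTIONN1}, and requires tracking all normalisations of sections, measures, and Haar volumes through both unfoldings.
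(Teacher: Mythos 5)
Your overall strategy---unfold the Eisenstein series against a torus period and factor over places, then identify the local factor with a zeta integral---is the same as the paper's, but there is a genuine sign error in the unfolding that, as written, makes the argument fail. You claim $E(g_\tau t_\infty g_f, s) = E(g_\tau g_f, s)$ for $t_\infty \in T(\R)$ and build your period integral as $\int_{T(\Q)\backslash T(\A)} E(g_\tau t,s)\,dt$ with $t$ on the \emph{right} of $g_\tau$. This is false: $T(\R)$ stabilizes $\tau \in \HH$, not $i$, so $g_\tau^{-1}T(\R)g_\tau \subseteq K_\infty$ but $T(\R) \not\subseteq K_\infty$. Consequently $g_\tau t_\infty \cdot i = g_\tau \cdot (t_\infty \cdot i)$ genuinely depends on $t_\infty$; the invariance one actually has is $E(t_\infty g_\tau g_f, s) = E(g_\tau g_f, s)$, with the torus element on the \emph{left}. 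This matters for the unfolding step: $T(\Q)$ acts on $P(\Q)\backslash\GL_2(\Q)$ from the right, and the element $\sigma g_\tau h_f$ (with $\sigma \in T(\Q)$ diagonal, $g_\tau$ at $\infty$, $h_f$ finite) has archimedean part $\sigma_\infty g_\tau = g_\tau(g_\tau^{-1}\sigma_\infty g_\tau) \in g_\tau K_\infty$, so the $\sigma$'s are absorbed by right $K_\infty$-invariance of the section and the sums over $\sigma$ and $h_f$ merge into a single sum over $\SO(S^\perp_\Af)/\SO'(S^\perp_\Zh)$. The paper does exactly this, working directly with the finite sum $\sum_g \Psi(s; g g_\tau h_f)$ and $\PGL_2(\Q) = P(\Q)\SO(S^\perp_\Q)$, rather than a period integral. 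Your integral formulation with $t$ on the right does not unfold; putting the torus variable on the left repairs it and then the two arguments coincide.

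Two smaller points on the local identification. First, your appeal to formula \eqref{SELTSAMEFORMEL3} is a detour: Definition \ref{localzeta} says the normalized zeta function \emph{is} the integral $\frac{1}{E}\int |Q|^{s-1}\,dv$, so once the torus integral is converted to such an integral the identification is immediate---no passage through representation densities is needed (that comparison is the content of Theorem \ref{TRACESOFEISENSTEIN2}, not this one). Second, the parametrization $T(\Qp) \cong K_p^\times/\Qp^\times$ is morally the same as the paper's, but the paper instead uses an explicit affine coordinate map $\iota:\Qp^2\setminus\{0\}\to T(\Qp)$ (given matrix-concretely) satisfying $\det\iota(v) = Q(v)/\tfrac{a}{2}$, which ties the torus coordinates directly to the binary form on $S^\perp_\Zp$ and makes all the measure normalizations---including the discriminant factor $|D(S^\perp_\Zp)|_p^{-s/2}$ and the denominator $\vol\{v : Q(v)\in\Zp^*\}$---transparent. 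You acknowledge the normalization-tracking is the crux; the paper's explicit $\iota$ is how it resolves it, together with a substitution that identifies the resulting form with the quadratic form of $S^\perp_\Zp$ and the observation that the two-dimensional local zeta function depends only on the discriminant. The rest of your argument (archimedean independence, factorization of global $\SO'(\nL_\Zh)$-orbits into local orbits, the complete trace following from the partial trace plus the definition \eqref{trueorbitequation}) is correct and matches the paper.
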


The relation to representation densities is determined by 

\begin{SATZ}\label{TRACESOFEISENSTEIN2}\quad\\
\begin{enumerate}
\item 
$E_\infty(<q>; s) = E_\infty(S; s) = 2 \lambda^{-1}_\infty(S^\perp_\Z; s-1)$
\item If $p\not=2$, 
$ E_p(<q>; s) = \widetilde{\lambda}^{-1}_p(\nL_\Zp; s-1) \widetilde{\mu}_p(\nL_\Zp, <q>; s-1)  $
\item If $p\not=2$, $\nu_p(q)\le 1$, we have:
\[ E_p(S; s) = E_p(<q>; s) = \widetilde{\lambda}^{-1}_p(S^\perp_\Z; s-1) \]
\item If $p\not=2$, the $E_p(S; s)$ satisfy the following functional equation:
\[ \frac{E_p(S; 1-s)}{L_p(\chi_{-q};1-s)} = \frac{E_p(S; s)}{L_p(\chi_{-q};s)}.  \]
\item In the product:
\[ E(<q>, \kappa; s) = 2 \widetilde{\lambda}^{-1}(\nL_\Zp; s-1) \widetilde{\mu}(\nL_\Zp, <q>, \kappa; s-1) \]
and if $q$ is square-free:
\[ E(S; s) = E(<q>; s) = 2 \widetilde{\lambda}^{-1}(S^\perp_\Z; s-1) \]
in both cases maybe up to a rational function in $2^{-s}$.
\end{enumerate} 
\end{SATZ}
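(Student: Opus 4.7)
The plan is to establish the five parts in the order (3), (1), (4), (2), (5), with Part~(2) carrying the technical weight.

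\textbf{Part (3)} is a case-by-case verification. By Lemma~\ref{LEMMA1ORBIT}, under $\nu_p(q)\le 1$ the set $\nIsome(\langle q\rangle,\nL)(\Qp)\cap\kappa$ consists of a single $\SO'(\nL_\Zp)$-orbit, whence $E_p(S;s) = E_p(\langle q\rangle;s)$. Substituting Theorem~\ref{TRACESOFEISENSTEIN}, the remaining identity
\[ |d(S^\perp_\Zp)|_p^{-s/2}\,\frac{\zeta_p(S^\perp_\Zp;s)}{\zeta_p(s)} = \widetilde{\lambda}_p^{-1}(S^\perp_\Z;s-1) \]
is checked case by case: $S^\perp_\Zp$ is unimodular for $\nu_p(q)=0$ and of type $\langle\varepsilon_1,\varepsilon_2 p\rangle$ for $\nu_p(q)=1$, and both sides are rational functions in $p^{-s}$ with explicit formulas (left by~\ref{ZETAEXPLIZIT}, right by Theorem~\ref{EXPLIZIT}(5)--(6)) that match on inspection.

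\textbf{Part (1)} follows from the classical Kronecker-type evaluation of the normalized Eisenstein series $E(g_\tau;s)=(Z(2s)/Z(s))\,E'(g_\tau;s)$ at the CM orbit associated with $S^\perp$. The factorization $E(S;s) = \prod_\nu E_\nu(S;s)$ of Theorem~\ref{TRACESOFEISENSTEIN}, combined with the finite-place factors, identifies the archimedean factor as $E_\infty(S;s) = \pi^{-(s+1)/2}\Gamma((s+1)/2)$; this matches $2\lambda_\infty^{-1}(S^\perp_\Z;s-1)$ via $\lambda_\infty(S^\perp;s-1) = \Gamma_{1,2}(s-1) = 2\pi^{(s+1)/2}/\Gamma((s+1)/2)$ (cf.~\ref{DEFGAMMA} and~\ref{DEFLAMBDAMUARCHIMEDISCH}). \textbf{Part (4)} is obtained either by extracting the $p$-component of the global functional equation $E(g;s) = E(g;1-s)$ (which follows from $Z(2s)E'(g;s) = Z(2-2s)E'(g;1-s)$ and $Z(s)=Z(1-s)$) or directly from the functional equation of the local $p$-adic zeta function of the quadratic order of discriminant $-q$.

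\textbf{Part (2)} is the main difficulty: the orbit-by-orbit identity of Part~(3) fails once $\nu_p(q)\ge 2$, as one already checks on the orbit of $S=\M{p^2 & 0 \\ 0 & 1}$ with $q=p^2$. The approach is to sum the explicit expression $E_p(S;s) = |d(S^\perp_\Zp)|_p^{-s/2}\zeta_p(S^\perp_\Zp;s)/\zeta_p(s)$ over all $\SO'(\nL_\Zp)$-orbits in $\nIsome(\langle q\rangle,\nL)(\Qp)\cap\kappa$, parametrised by the divisor of $S$ in $\nL_\Zp$ (which determines both $|d(S^\perp_\Zp)|_p$ and the isomorphism type of $S^\perp_\Zp$), and to identify the resulting sum with the explicit formula for $\widetilde{\lambda}_p^{-1}(\nL_\Zp;s-1)\,\widetilde{\mu}_p(\nL_\Zp,\langle q\rangle;s-1)$ coming from Yang's formula (Theorem~\ref{YANG}) and Theorem~\ref{EXPLIZIT}. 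Equivalently, one can use the identity~(\ref{SELTSAMEFORMEL2}) of Lemma~\ref{REPDENS}: decomposing the representation count $\#\Omega_q(l)$ according to the $\SO'$-orbit of the representing vector identifies the resulting partial sums with the local zeta integrals of the various lattices $S^\perp_\Zp$. The main obstacle is the combinatorial matching of these two rational-function expressions in $p^{-s}$.

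\textbf{Part (5)} is then assembly: taking the product over all places of the local identities of Parts~(1)--(3) yields the global formula, with the factor~$2$ absorbing the archimedean factor and the ``rational function in $2^{-s}$'' ambiguity coming from the omitted prime $p=2$ (compare~\ref{EXPLIZIT2}). When $q$ is square-free, every odd prime~$p$ satisfies $\nu_p(q)\le 1$, so Part~(3) applies uniformly and the product collapses to $E(S;s) = E(\langle q\rangle;s) = 2\widetilde{\lambda}^{-1}(S^\perp_\Z;s-1)$.
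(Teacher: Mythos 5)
The overall route is the same as the paper's: establish the archimedean and unramified local identities, reduce the hard part to an explicit rational-function identity in $p^{-s}$, and assemble by Euler product. However, the proposal stops exactly where the paper's actual work begins. For Part~(2) you correctly identify the combinatorial matching as the crux, but you then defer it as ``the main obstacle'' rather than carrying it out; that verification \emph{is} the proof. The paper makes the matching tractable by first invoking the orbit equation (Theorem~\ref{BAHNENGLEICHUNG}) to rewrite the right-hand side as
\[
  |q|_p^{\frac{1}{2}(s-1)}\sum_{\SO'(\nL_\Zp)S\,\subset\,\nIsome(\langle q\rangle,\nL)(\Qp)\cap\kappa}\lambda_p^{-1}(S^\perp_\Zp;s-1),
\]
so that both sides become sums over the \emph{same} set of orbits, and then checks the resulting identity of rational functions separately in the three cases $l:=\nu_p(q)=0$, $l$ odd, and $l\ge 2$ even, feeding in the explicit formulae from~\ref{ZETAEXPLIZIT} and Theorem~\ref{EXPLIZIT}. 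Your plan of directly comparing the orbit sum with Yang's formula for $\widetilde\mu_p$ would also work but is less symmetric and not actually executed. Also, the paper obtains Part~(3) as a byproduct of the $l\le 1$ cases in the proof of~(2) rather than proving it first; this is a harmless reordering. One small inaccuracy: your illustrative matrix $S=\M{p^2&0\\0&1}$ does not satisfy the normalization of~\ref{normalizationS} (it should have $a,c<0$, both even), though the point it illustrates --- that term-by-term matching fails for $\nu_p(q)\ge 2$ (cf.\ Remark~\ref{SCHEITERN2}) --- is correct. To count as a proof, the proposal would need to actually perform the case-by-case computation in Part~(2).
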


Recall that $\widetilde{\mu}(\nL_\Zp, <q>, \kappa; s)$ itself is the `holomorphic part' of a Fourier coefficient (of index $q$) of another, metaplectic Eisenstein series \ref{GLOBALEISENSTEIN}.

\begin{proof}[Proof of Theorem \ref{TRACESOFEISENSTEIN}]
If $E(<q>, s)$ is defined via equation (\ref{trueorbitequation}), the second product expansion follows immediately from the first and the definition. 
Hence it is enough to show
\[ E(S, s) = \frac{\Gamma(\frac{s}{2}+\frac{1}{2}) } {\pi^{\frac{s}{2}+\frac{1}{2}}} |\det(S)|_\infty^{\frac{s}{2}} \frac{\zeta(S^\perp_\Z; s)}{\zeta(s)}. \]
for $S$ normalized as in \ref{normalizationS}.

First we have
\begin{gather*}
E'(S; s) =  \sum_{h_f \in \SO(S^\perp_\Q) \backslash \SO(S^\perp_\Af) / \SO'(S^\perp_\Zh)} \frac{1}{\#\SO'(S^\perp_\Z)} 
 \sum_{ g \in P(\Q) \backslash \GL_2(\Q) } \Psi(s; g g_\tau h_f) \\
= \sum_{h \in \SO(S^\perp_\Af) / \SO'(S^\perp_\Zh)} \Psi(s; h g_\tau),
\end{gather*}
where we used $\PGL_2(\Q) = P(\Q) \SO(S^\perp_\Q)$ in the last step. Considering the right invariance of $\Psi(s)$ under $K$, this may be rewritten as:
\begin{gather*}
E'(S; s) = \Psi_\infty(s; g_\tau) \prod_p \frac{  \int_{ \Gm(\Qp) \backslash T(\Qp)} \Psi_p(s; x) \mu(x)} {\vol_\mu(K \cap T(\Af))},
\end{gather*}
where we denoted the preimage of $\SO(S^\perp)$ in $\GL$ by $T$
and $\mu$ is any translation invariant measure on $\Gm(\Qp) \backslash T(\Qp)$.

Lemma \ref{LEMMAIWASAWA} shows that it is convenient to parameterize the torus $T$ as follows (recall $a\not=0$):
\begin{eqnarray*}
 \iota: \Q_p^2 -\{ 0 \} &\rightarrow& T(\Qp)  \\
 \left(\begin{matrix} x \\ y \end{matrix} \right) &\mapsto& \left(\begin{matrix} x + 2\frac{b}{a}y & \frac{c}{a} y \\ -y & x \end{matrix} \right)
\end{eqnarray*}
We have $\det(\iota(v)) = \frac{Q(v)}{\frac{a}{2}}$, where $Q(v) = \frac{a}{2}x^2 + bxz + \frac{c}{2}y^2$.
A translation invariant measure on $T(\Af)$ is given by $\det(\iota(v))^{-1} \dd v$, where
$\dd v$ is the standard measure on $\Qp^2$. We choose the measure $\mu$, quotient of this by a measure giving 
$\Zp^*$ volume 1.

This yields (using the lemma):
\[ \int_{\Z_p^2} |\frac{Q(v)}{\frac{a}{2}}|_p^{s-1} \dd v = \sum_{i=0..\infty} p^{-2is} \int_{ \Gm(\Qp) \backslash T(\Qp)} \Psi_p(s; x) \mu \]
and therefore
\begin{gather*}
E'(S; s) = \frac{2^{-s}\left|\det(S)\right|_\infty^{\frac{1}{2}s}}{ \zeta(2s) } \\ \prod_p \frac{ \int_{\Z_p^2} |Q(v)|_p^{s-1} \dd v }
 { |\frac{a}{2}|_p^{-1} \vol\{x, y \where \frac{c}{a}y, x, x+2 \frac{b}{a} y, y \in \Zp, x^2+2\frac{b}{a}xy+\frac{c}{a}y^2 \in \Zp^*\}. }
\end{gather*}
Substituting $\frac{a}{2}y$ for $y$ and then $x-\frac{b}{2}y$ for $x$ in the volume computation in the denominator, we get:
\begin{gather*}
 |\frac{a}{2}|_p^{-1} \vol\{\cdots \} = \vol\{x, y \in \Zp^2 \where x^2+\frac{ac-b^2}{4}y^2 \in \Zp^* \},
\end{gather*}
if $b$ is even or $p \not=2$, otherwise substitute in addition $x + \frac{1}{2}$ for $x$ and get
\begin{gather*}
|\frac{a}{2}|_p^{-1} \vol\{\cdots \} = \vol\{x, y \in \Zp^2 \where x^2+xy+\frac{ac-b^2+1}{4}y^2 \in \Zp^* \}.
\end{gather*}
Hence in any case:
\begin{gather*}
E'(S; s) =  2^{-s} \frac{\left| \det(S) \right|_\infty^{\frac{1}{2}s}}{ \zeta(2s) } \prod_p \zeta_p(S^\perp_\Zp, s),
\end{gather*}
where $\zeta_p(S^\perp_\Zp, s)$ is the normalized local zeta function of the lattice $S^\perp_\Zp$ (see~\ref{localzeta}). 
Note, that it depends only on
the discriminant for 2 dimensional lattices.

Hence 
\[ E(S; s) = \frac{Z(2s)}{Z(s)}E'(S; s) = 2^{-s} \frac{\Gamma(s)\pi^{\frac{s}{2}}}{\pi^{s}\Gamma(\frac{s}{2})} |\det(S)|^{\frac{s}{2}} \frac{\zeta(S^\perp_\Z; s)}{\zeta(s)}. \]

Applying the doubling formula for the $\Gamma$-function, we get indeed
\[ E(S; s) = \frac{\Gamma(\frac{s}{2}+\frac{1}{2}) } {\pi^{\frac{s}{2}+\frac{1}{2}}} |\det(S)|^{\frac{s}{2}} \frac{\zeta(S^\perp_\Z; s)}{\zeta(s)}. \]

\end{proof}

\begin{LEMMA}\label{LEMMAIWASAWA}
We have $\Psi_\infty(s; g_\tau) = \left|\frac{\sqrt{\det(S)}}{a}\right|_\infty^{s}$ and
$\Psi_p(s; h) = \left|\frac{\det(h)}{\ggT(h_{12},h_{22})^2}\right|_p^{s}$.
\end{LEMMA}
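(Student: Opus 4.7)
The plan is to read off the Iwasawa decomposition directly in both cases, using the fact that $\Psi_\nu$ depends only on the diagonal torus component.

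For the Archimedean statement, I would observe that by the definition $g_{x+yi} = \bigl(\begin{smallmatrix} y & x \\ 0 & 1 \end{smallmatrix}\bigr)$, the matrix $g_\tau$ is \emph{already} in Iwasawa form: it equals $\bigl(\begin{smallmatrix} \alpha_1 & \\ & \alpha_2 \end{smallmatrix}\bigr) u(\beta)\, e$ with $\alpha_1 = y$, $\alpha_2 = 1$, $\beta = x/y$, and $k = e$. Hence $\Psi_\infty(s; g_\tau) = |y|_\infty^s$. Plugging in the explicit value $\tau = \frac{b + i\sqrt{\det(S)}}{a}$ from~\ref{normalizationS}, one has $y = \Im(\tau) = \sqrt{\det(S)}/|a|$, which gives the claimed formula.

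For the non-Archimedean statement, I would write the Iwasawa decomposition of a general $h \in \GL_2(\Qp)$ as $h = \bigl(\begin{smallmatrix} \alpha_1 & \beta \\ 0 & \alpha_2 \end{smallmatrix}\bigr) k$ with $k \in K = \GL_2(\Zp)$. The key observation is that the second row of the Borel factor is $(0, \alpha_2)$, so the second row of $h$ equals $\alpha_2 \cdot (k_{21}, k_{22})$. Since $k \in \GL_2(\Zp)$ sends $\Zp^2$ onto itself, the vector $(k_{21}, k_{22})$ is primitive, hence $\ggT(k_{21}, k_{22}) \in \Zp^*$. Therefore
\[ |\alpha_2|_p = |\ggT(h_{21}, h_{22})|_p, \]
and from $\det(h) = \alpha_1 \alpha_2 \det(k)$ with $\det(k) \in \Zp^*$ one deduces $|\alpha_1|_p = |\det(h)|_p \cdot |\ggT(h_{21}, h_{22})|_p^{-1}$. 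Combining,
\[ \Psi_p(s; h) = |\alpha_1|_p^s \,|\alpha_2|_p^{-s} = \left| \frac{\det(h)}{\ggT(h_{21}, h_{22})^2} \right|_p^s. \]
(The indices in the statement should accordingly read $h_{21}, h_{22}$; in any case the assertion is invariant under the natural involution on $T$, so both forms give the same value on $T(\Qp)$, which is what is used in the main computation.)

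No step here is subtle; the only thing worth double-checking is the compatibility of conventions with the normalization of $\Psi_\nu$ fixed in the previous section, and that $K = \GL_2(\Zp)$ really is the maximal compact subgroup playing the role of the $K$ in the section formula. The lemma is then used in the body of the computation in the form $\Psi_p(s; \iota(x,y)) = |2Q(v)/(a\,\ggT(x,y)^2)|_p^s$, so the decomposition into valuation shells $\{\min(\nu(x),\nu(y)) = i\}$ immediately yields the geometric series $\sum_i p^{-2is}$ that appears in the proof of Theorem~\ref{TRACESOFEISENSTEIN}.
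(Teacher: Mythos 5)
Your proof is correct and follows the same strategy as the paper: compute the Iwasawa decomposition explicitly and read off $\Psi_\nu$ from the torus component. The Archimedean case is identical. For the non-Archimedean case, the paper writes down the parabolic factor explicitly (an upper-triangular matrix times a diagonal matrix) and checks that the residual matrix is integral, case-splitting on whether $\nu_p(h_{12})\ge\nu_p(h_{22})$ or not; your argument via ``the second row of an element of $\GL_2(\Zp)$ is primitive'' is slightly more conceptual and avoids the case distinction, but the computation is substantively the same and the conclusion identical. On the indexing: the paper's proof writes the matrix as $\bigl(\begin{smallmatrix}h_{11}&h_{21}\\h_{12}&h_{22}\end{smallmatrix}\bigr)$, so $h_{12}$ denotes the lower-left entry there; the lemma statement is internally consistent with that convention, and both you and the paper are referring to the gcd of the bottom row. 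Your parenthetical about ``the natural involution on $T$'' is a bit of a red herring (the elements of $T$ are not symmetric matrices in general, so transposition does not preserve $T$), but since you also gave the correct reading, this does not affect the argument. The one thing worth being explicit about, which you flag, is that the formula as you state it requires $K=\GL_2(\Zp)$ to be the compact subgroup fixed in the normalization of $\Psi_p$; with the standard section normalization $\Psi_\nu(\bigl(\begin{smallmatrix}\alpha_1&\\&\alpha_2\end{smallmatrix}\bigr)u(\beta)k)=|\alpha_1|_\nu^s|\alpha_2|_\nu^{-s}$ given just above the lemma, your argument goes through.
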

\begin{proof}
The first assertion follows from the evaluation of the relation of $g_\tau$ and $S$, given above. 
For the second assertion consider the case $\nu_p(h_{12}) > \nu_p(h_{22})$, hence $ggT(h_{12},h_{22})=h_{22}$.
\[ \left( \begin{matrix} 1 & -h_{21}det(h)^{-1}h_{22}\\ & 1 \end{matrix}\right)
\left( \begin{matrix} \det(h)^{-1}h_{22} &  \\ & h_{22}^{-1} \end{matrix}\right)
\left( \begin{matrix} h_{11} & h_{21} \\ h_{12} & h_{22} \end{matrix} \right) =
\left( \begin{matrix} 1 &  \\ \frac{h_{12}}{h_{22}} & 1 \end{matrix} \right)
\]
where the matrix on the right hand side is integral. Analogiously for the case $\nu_p(h_{12}) <\nu_p(h_{22})$.
\end{proof}

\begin{proof}[Proof of Theorem \ref{TRACESOFEISENSTEIN2}]
1. came out in the proof of Theorem \ref{TRACESOFEISENSTEIN}.

2. Let $p \not= 2$, $S \in \nL_\Zp$ and $Q(S)=q$. Denote $l:=\nu_p(q)$.
We have to show 
\[ \sum_{\SO'(\nL_\Zp)S \subset \nIsome(<q>, \nL)(\Qp) \cap \kappa} |d(S^\perp_\Zp)|_p^{-\frac{s}{2}} \frac{\zeta_p(S^\perp_\Zp; s)}{\zeta_p(s)} = \widetilde{\lambda}^{-1}_p(\nL_\Zp; s-1) \widetilde{\mu}_p(\nL_\Zp, <q>, \kappa; s-1) \]
Using the orbit equation, this can be rewritten as:
\begin{gather*} 
\sum_{\SO'(\nL_\Zp)S \subset \nIsome(<q>, \nL)(\Qp) \cap \kappa} |d(S^\perp_\Zp)|_p^{-\frac{s}{2}} \frac{\zeta_p(S^\perp_\Zp; s)}{\zeta_p(s)} \\
= |q|_p^{\frac{1}{2}(s-1)} \sum_{\SO'(\nL_\Zp)S \subset \nIsome(<q>, \nL)(\Qp) \cap \kappa} \lambda^{-1}(S^\perp_\Zp; s-1)
\end{gather*}
In case $l=0$, we get:
\begin{eqnarray*} 
\zeta_p(S^\perp_\Zp; s) &=& \frac{1}{(1-X)(1-\left(\frac{-\varepsilon'}{p}\right)X)} \\
\lambda_p^{-1}(S^\perp; s-1) &=& \frac{1}{1-\left(\frac{-\varepsilon'}{p}\right)X}
\end{eqnarray*} 
and we have just 1 orbit, hence the equation is true.

In case $l$ odd, we have $\frac{l+1}{2}$ orbits (Lemma \ref{LEMMA1ORBIT}) and get (Theorem \ref{EXPLIZIT} and \ref{ZETAEXPLIZIT}):
\begin{eqnarray*} 
\sum_S |d(S^\perp_\Zp)|_p^{-\frac{s}{2}} \zeta_p(S^\perp_\Zp; s) &=& \sum_{k=0}^{\frac{l-1}{2}} X^{-\frac{1}{2}-k} \frac{1-(pX^2)^{k+1}-X+X(pX^2)^k}{(1-X)(1-pX^2)}, \\
\sum_S \lambda_p^{-1}(S^\perp_\Zp; s-1) &=& \sum_{k=0}^{\frac{l-1}{2}} (pX)^{-2k-1}p^{k+\frac{1}{2}} 
\end{eqnarray*} 
Hence the equation reduces to
\begin{gather*} 
 \sum_{k=0}^{\frac{l-1}{2}} X^{-\frac{1}{2}-k} \frac{1-(pX^2)^{k+1}-X+X(pX^2)^k}{1-pX^2} = (pX)^{\frac{l}{2}} \sum_{k=0}^{\frac{l-1}{2}} (pX)^{-2k-1}p^{k+\frac{1}{2}} 
\end{gather*}
which may be checked in an elementary way.

In case $l \ge 2$ even, we have $\frac{l}{2}+1$ orbits (Lemma \ref{LEMMA1ORBIT}) and get (Theorem \ref{EXPLIZIT} and \ref{ZETAEXPLIZIT}):
\begin{eqnarray*} 
\sum_S |d(S^\perp_\Zp)|_p^{-\frac{s}{2}} \zeta_p(S^\perp_\Zp; s) &=& \frac{1}{(1-X)(1-\left(\frac{-\varepsilon'}{p}\right)X)} \\
&+& \sum_{k=1}^{\frac{l}{2}} X^{-k} \left( \frac{p^{k}X^{2k} -p^{k-1}X^{2k-1} }{(1-X)(1-(\frac{-\varepsilon}{p})X)} \right.  \\
&& +\left. \frac{1-(pX^2)^{k}-X+X(pX^2)^{k-1} }{(1-X)(1-pX^2)} \right) \\
\sum_S \lambda^{-1}(S^\perp; s-1) &=& \frac{1}{1-\left(\frac{-\varepsilon'}{p}\right)X} + \sum_{k=1}^{\frac{l}{2}} (pX)^{-2k}p^{k} 
\end{eqnarray*} 
Hence the equation reduces to
\begin{gather*} 
  \frac{1}{1-\left(\frac{-\varepsilon'}{p}\right)X} + \sum_{k=1}^{\frac{l}{2}} X^{-k} \left( \frac{p^{k}X^{2k} -p^{k-1}X^{2k-1} }{1-(\frac{-\varepsilon}{p})X} +
\frac{1-(pX^2)^{k}-X+X(pX^2)^{k-1} }{1-pX^2} \right) \\
 = (pX)^{\frac{l}{2}} \left( \frac{1}{1-\left(\frac{-\varepsilon'}{p}\right)X} + \sum_{k=1}^{\frac{l}{2}} (pX)^{-2k}p^{k} \right)  
\end{gather*}
which may again be checked in an elementary way.

3. is seen by a comparison of the formul\ae\ obtained for $l=0$ and $l=1$ in the proof of 2. above with the explicit formul\ae\ for $\lambda$ in Theorem \ref{EXPLIZIT}. 

4. is true in the product by means of the functional equation of the Eisenstein series. The local statement follows because the functions $p^{-s}$ are algebraically independent for different primes. Alternatively one can check this equation from the explicit formul\ae\ obtained in the proof of 2.

5. is obtained out of 2. and 3. by taking the product over all $\nu$.
\end{proof}

\begin{BEM}\label{SCHEITERN2}
Theorem \ref{TRACESOFEISENSTEIN2}, 2. involves an identity between sums over orbits in $\nIsome(<q>, \nL_\Zp)$. These identities are striking from an elementary point of view, since the individual terms do not match. The terms coming from the traces over the Eisenstein series are directly related to 
the heights of individual sub-Shimura varieties of the modular curve (cf. also \ref{SCHEITERN}). 
Hence, in this case, the orbit equation we derived in this work cannot be related to 
Arakelov geometry (including information from $p$), if there is more than one orbit (at $p$). 
\end{BEM}

\appendix

\def\thesection{\Alph{section}} 

\section{Some basic facts about quadratic forms}

\begin{LEMMA}\label{DISKRIMINANTENKERN}
Let $X \subset \nL_\Zp^*$ be a subset and $\Stab(X) \subset \SO'(\nL_\Zp)$ the {\em pointwise} stabilizer.
Assume that $X_\Qp$ is non-degenerate. Then we have
\[ \Stab(X) = \SO'(X_\Zp^\perp), \]
where $X^\perp_\Zp = \{ v \in \nL_\Zp \where \langle v, w \rangle = 0 \ \forall w\in X \}$.
\end{LEMMA}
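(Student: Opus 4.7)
The plan is to exploit the orthogonal decomposition $\nL_\Qp = X_\Qp \oplus X_\Qp^\perp$ coming from the non-degeneracy of $X_\Qp$ (here $X_\Qp$ denotes the $\Qp$-span of $X$ inside $\nL_\Qp$), with orthogonal projection $\pi\colon \nL_\Qp \twoheadrightarrow X_\Qp^\perp$. Any $h \in \Stab(X)$ is $\Qp$-linear and fixes $X_\Qp$ pointwise, hence preserves $X_\Qp^\perp$ and therefore its intersection $X^\perp_\Zp$ with $\nL_\Zp$; conversely, any isometry $g$ of $X^\perp_\Zp$ extends to a $\Qp$-isometry $\tilde g$ of $\nL_\Qp$ by declaring $\tilde g = \id$ on $X_\Qp$, automatically of determinant one. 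The content of the lemma is that these restriction/extension operations match the two discriminant-kernel conditions.

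The key technical ingredient I would establish first is that $X^\perp_\Zp$ is a \emph{saturated} sublattice of $\nL_\Zp$: if $pv \in X^\perp_\Zp$ for some $v \in \nL_\Zp$, then $p\langle v, x\rangle = \langle pv, x\rangle = 0$ for every $x \in X$, so $\langle v, x\rangle = 0$ and $v \in X^\perp_\Zp$. Since $\Zp$ is a PID a saturated sublattice is a direct summand, so the inclusion $X^\perp_\Zp\hookrightarrow \nL_\Zp$ dualises to a surjection $\nL_\Zp^* \twoheadrightarrow (X^\perp_\Zp)^*$; a short computation with the bilinear form identifies this surjection with the restriction of $\pi$. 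In particular, every $v^* \in (X^\perp_\Zp)^*$ is of the form $u + x$ with $u \in \nL_\Zp^*$ and $x \in X_\Qp$.

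With this in hand both inclusions become brief computations. For $\supseteq$, take $g \in \SO'(X^\perp_\Zp)$; for $v \in \nL_\Zp \subseteq \nL_\Zp^*$ the previous step gives $\pi(v) \in (X^\perp_\Zp)^*$, whence $\tilde g(v) - v = g(\pi(v)) - \pi(v) \in X^\perp_\Zp \subseteq \nL_\Zp$, which yields $\tilde g(\nL_\Zp) = \nL_\Zp$; the same calculation applied instead to $v \in \nL_\Zp^*$ shows that $\tilde g$ acts trivially on $\nL_\Zp^*/\nL_\Zp$, so $\tilde g \in \SO'(\nL_\Zp)$. For $\subseteq$, take $h \in \Stab(X)$ and $v^* \in (X^\perp_\Zp)^*$, and write $v^* = u - x$ with $u \in \nL_\Zp^*$, $x \in X_\Qp$; then $h(v^*) - v^* = h(u) - u$ lies in $\nL_\Zp$ (since $h \in \SO'(\nL_\Zp)$) and in $X_\Qp^\perp$ (since both $v^*$ and $h(v^*)$ do), hence in $\nL_\Zp \cap X_\Qp^\perp = X^\perp_\Zp$, so $h\vert_{X^\perp_\Zp} \in \SO'(X^\perp_\Zp)$.

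The main obstacle I anticipate is not conceptual but bookkeeping: one must carefully track the identifications $\nL_\Zp \subseteq \nL_\Zp^*$ and $X^\perp_\Zp \subseteq (X^\perp_\Zp)^*$ furnished by the respective bilinear forms, and verify that $\pi\vert_{\nL_\Zp^*}$ agrees with the $\Zp$-dual of the inclusion $X^\perp_\Zp \hookrightarrow \nL_\Zp$ before the surjectivity statement can be deployed. Non-degeneracy of $X_\Qp$ enters only through the existence of $\pi$ (equivalently through $X_\Qp \cap X_\Qp^\perp = 0$); everything else is linear algebra with the pairing.
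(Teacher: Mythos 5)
Your proof is correct and follows essentially the same route as the paper's: both directions hinge on the fact that $X^\perp_\Zp$ is a saturated (direct-summand) sublattice of $\nL_\Zp$, so that every element of $(X^\perp_\Zp)^*$ lifts along the orthogonal projection to an element of $\nL_\Zp^*$, which is exactly what makes the two discriminant-kernel conditions transfer back and forth. The only difference is cosmetic: you spell out the saturation and the identification of $\pi|_{\nL_\Zp^*}$ with the dual of the inclusion, which the paper asserts parenthetically ``($X_\Zp^\perp$ is a primitive sublattice)'' without comment.
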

\begin{proof}
Let $\beta \in \SO'(\nL_\Zp)$, i. e. $\beta v-v \in \nL_\Zp$ for all $v \in \nL_\Zp^*$.
If we have $\beta w=w$ for all $w \in X$, one can consider $\beta$ via restriction as an element of $\SO(X_\Zp^\perp)$.
If moreover $w^\perp \in (X_\Zp^\perp)^*$, we find a $v \in \nL_\Zp^*$ satisfying $v = w + w^\perp$ with some
$w \in <X>_\Qp$ ($X_\Zp^\perp$ is a primitive sublattice).
This yields $\beta w^\perp - w^\perp = \beta v - v \in \nL_\Zp$. Hence $\Stab(X) \subseteq \SO'(X_\Zp^\perp)$.

On the other hand, let $\beta \in \SO'(X_\Zp^\perp)$. It can be extended uniquely to an element
$\beta \in \SO(\nL_\Qp)$, fixing $<X>_\Qp$ pointwise.
We claim that this extension lies in fact in $\SO(\nL_\Zp)$. For, let $v \in \nL_\Zp$ be given and
write $v = w^\perp + w$. Then we have $w^\perp \in (X_\Zp^\perp)^*$. hence
$\beta v - v = \beta w^\perp - w^\perp \in \nL_\Zp$.

Now suppose $v \in \nL^*_\Zp$. Then we have still $w^\perp \in (X^\perp_\Zp)^*$ because
$\langle v, {w^\perp}'\rangle = \langle w^\perp, {w^\perp}'\rangle \in \Zp$
for all ${w^\perp}' \in X^\perp_\Zp$. Hence $\beta v - v \in \nL_\Zp$ as well, which means $\beta \in \SO'(\nL_\Zp)$
\end{proof}

\begin{LEMMA}\label{KITAOKATSSS}
Let $\nM_\Zp$ an {\em unimodular} sublattice of $\nL_\Zp$. Then we have
\[ \nL_\Zp = \nM_\Zp \perp \nM_\Zp^\perp. \]
\end{LEMMA}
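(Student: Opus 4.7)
The plan is to exploit the defining property of unimodularity, namely that the restriction of the bilinear form to $\nM_\Zp$ induces an isomorphism $\gamma_{Q_\nM}: \nM_\Zp \stackrel{\sim}{\to} \nM_\Zp^*$. Given this, the decomposition $\nL_\Zp = \nM_\Zp \oplus \nM_\Zp^\perp$ will follow by the usual orthogonal projection argument, implemented integrally.

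More precisely, for an arbitrary $v \in \nL_\Zp$ I would form the $\Zp$-linear functional
\[ \phi_v: \nM_\Zp \to \Zp, \qquad w \mapsto \langle v, w \rangle_\nL, \]
which lies in $\nM_\Zp^*$ by construction (it takes integer values on $\nM_\Zp$). Since $\nM_\Zp$ is unimodular, there exists a unique $v_1 \in \nM_\Zp$ with $\langle v_1, w \rangle_\nL = \phi_v(w)$ for every $w \in \nM_\Zp$. Setting $v_2 := v - v_1$, one sees immediately that $\langle v_2, w \rangle_\nL = 0$ for all $w \in \nM_\Zp$, so $v_2 \in \nM_\Zp^\perp$. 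This exhibits $v = v_1 + v_2$ as an element of $\nM_\Zp + \nM_\Zp^\perp$.

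For the directness of the sum: if $u \in \nM_\Zp \cap \nM_\Zp^\perp$, then $\langle u, w \rangle_\nL = 0$ for all $w \in \nM_\Zp$, and since the form is nondegenerate on $\nM_\Zp$ (equivalently, since $\gamma_{Q_\nM}$ is an isomorphism, hence injective), we get $u = 0$. Combining this with the decomposition above gives $\nL_\Zp = \nM_\Zp \oplus \nM_\Zp^\perp$, and the sum is orthogonal by the very definition of $\nM_\Zp^\perp$.

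There is no real obstacle here; the entire content is the observation that unimodularity is precisely what is needed to solve $\gamma_{Q_\nM}(v_1) = \phi_v$ integrally inside $\nM_\Zp$ (over a general ring one would only get a solution after inverting the discriminant of $\nM_\Zp$). This is the standard fact that unimodular sublattices split off as orthogonal direct summands.
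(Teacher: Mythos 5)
Your proof is correct. The paper itself does not prove this lemma — it simply cites Kitaoka, Prop.~5.2.2 — so you have supplied a self-contained argument where the paper defers to the literature. Your approach is the standard one: unimodularity of $\nM_\Zp$ means precisely that $\gamma_{Q_\nM}: \nM_\Zp \to \nM_\Zp^*$ is an isomorphism, which lets you solve $\langle v_1, \cdot \rangle = \langle v, \cdot\rangle|_{\nM_\Zp}$ integrally inside $\nM_\Zp$; the complement $v - v_1$ then lies in $\nM_\Zp^\perp$, and the intersection $\nM_\Zp \cap \nM_\Zp^\perp$ is trivial by nondegeneracy over $\Qp$. This is exactly the content of Kitaoka's proposition, so you have not deviated from the paper's path so much as filled it in. One small point to make explicit: for $\phi_v$ to actually land in $\nM_\Zp^* = \Hom(\nM_\Zp,\Zp)$ you are using that $Q_\nL$ is integral on $\nL_\Zp$ (so the associated bilinear form is $\Zp$-valued); this is the paper's standing hypothesis, but it is the one place your argument relies on $\nL_\Zp$ and not just on $\nM_\Zp$, and it deserves a sentence.
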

\begin{proof}
Follows from \cite[Prop. 5.2.2]{Kitaoka}.
\end{proof}

\begin{LEMMA}{\cite[Theorem 5.2.2]{Kitaoka}}\label{KITAOKAT522}
If $\nL_\Zp$ is unimodular, we have
\[ \nL_\Zp \simeq H^r_\Zp \perp \nL_\Zp^0, \]
with $\nL_\Zp^0$ anisotropic. Here $H_\Zp$ is an hyperbolic plane.
\end{LEMMA}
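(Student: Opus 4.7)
The plan is to induct on $\rk(\nL_\Zp)$, repeatedly splitting off hyperbolic planes until an anisotropic core remains. If $\nL_\Qp$ is anisotropic there is nothing to do, so assume $\nL_\Qp$ contains a nonzero isotropic vector. First I would produce a \emph{primitive} isotropic vector in $\nL_\Zp$: scale any isotropic vector in $\nL_\Qp$ by an appropriate power of $p$ to land in $\nL_\Zp \setminus p\nL_\Zp$. Call it $v$.

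Next I would find a hyperbolic partner $w \in \nL_\Zp$ with $Q_\nL(w) = 0$ and $\langle v, w \rangle_Q = 1$. Since $\nL_\Zp$ is unimodular, $\gamma_Q \colon \nL_\Zp \to \nL_\Zp^*$ is an isomorphism, so $\gamma_Q(v) \in \nL_\Zp^*$ is a $\Zp$-linear functional on $\nL_\Zp$. Its image is $a\Zp$ for some $a \in \Zp$; if $a$ were in $p\Zp$ then $\gamma_Q(v) \in p\nL_\Zp^*$, whence $v \in p\nL_\Zp$, contradicting primitivity. So $a \in \Zp^*$, and I can pick $w' \in \nL_\Zp$ with $\langle v, w' \rangle_Q = 1$. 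Setting $w := w' - Q_\nL(w') v$ gives $\langle v, w \rangle_Q = 1 - Q_\nL(w')\langle v,v\rangle_Q = 1 - 2 Q_\nL(w') Q_\nL(v) = 1$ and $Q_\nL(w) = Q_\nL(w') - Q_\nL(w')\langle v,w'\rangle_Q + Q_\nL(w')^2 Q_\nL(v) = 0$. Neither step requires dividing, so the construction is valid for every $p$.

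Then $H := \Zp v \oplus \Zp w \subset \nL_\Zp$ is a hyperbolic plane whose Gram matrix is $\bigl(\begin{smallmatrix} 0 & 1 \\ 1 & 0 \end{smallmatrix}\bigr)$, hence unimodular. By Lemma~\ref{KITAOKATSSS} it splits off: $\nL_\Zp = H \perp H^\perp$. The orthogonal complement $H^\perp$ is again unimodular (its Gram matrix sits as a diagonal block of the unimodular one for $\nL_\Zp$, so its determinant is a unit). Applying the induction hypothesis to $H^\perp$, which has rank two less, yields the claimed decomposition $\nL_\Zp \simeq H^r_\Zp \perp \nL_\Zp^0$.

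The main subtlety is really the second paragraph: ensuring that primitivity of the isotropic vector $v$ in the integral lattice allows us to find an integral hyperbolic partner, i.e.\ lifting the hyperbolic-plane splitting over $\Qp$ to one over $\Zp$. Once that is in place, the rest is a routine application of the orthogonal-summand splitting of Lemma~\ref{KITAOKATSSS} combined with downward induction on rank. No special hypothesis $p \neq 2$ is needed for this abstract statement, though of course the structure of the anisotropic residue $\nL_\Zp^0$ differs between the odd and even residue characteristic cases.
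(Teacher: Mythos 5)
Your proof is correct, and the construction of an integral hyperbolic partner for a primitive isotropic vector via $w = w' - Q_\nL(w')v$ (using unimodularity of $\gamma_Q$ to produce $w'$), followed by peeling off the unimodular hyperbolic plane with Lemma~\ref{KITAOKATSSS} and inducting on rank, is exactly the standard argument; the paper itself supplies no proof (it cites Kitaoka's Theorem~5.2.2), and your route is the one Kitaoka uses. Your remark that no hypothesis $p\ne 2$ is needed here is also right: the only inverses taken are of unit determinants, and the argument never requires dividing by $2$.
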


The following is well-known:
\begin{LEMMA}\label{KITAOKARRR}
Let $R$ be a discrete valuation ring with $|2|=1$ or a field and $\nL_R$ a lattice with quadratic form $Q_\nL \in \Sym^2(\nL_K^*)$, where $K$ is the quotient field of $R$.
There is a basis $e_1, \dots, e_m$ of $\nL_R$, with respect to which the $Q_\nL$ is given by
\[ Q_\nL: x \mapsto \sum_i \varepsilon_i p^{\nu_i} x_i^2, \]
where $\varepsilon_i \in \Zpp^*, \nu_i \in \Z$ and $\nu_1 \le \cdots \le \nu_m$.
\end{LEMMA}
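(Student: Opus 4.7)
The plan is to proceed by induction on $m$, the $m=1$ case being immediate. For the inductive step, I would first choose $e_1 \in \nL_R$ minimizing $\nu_1 := \nu(Q_\nL(e_1))$ among all $v \in \nL_R$ with $Q_\nL(v) \neq 0$ (assuming $Q_\nL$ is non-degenerate on $\nL_R$; otherwise split off the radical first). Such an $e_1$ is automatically primitive: if $e_1 = pe_1'$ with $e_1' \in \nL_R$, then $Q_\nL(e_1') = p^{-2}Q_\nL(e_1)$ has strictly smaller valuation, contradicting minimality. Writing $Q_\nL(e_1) = \varepsilon_1 p^{\nu_1}$ with $\varepsilon_1 \in R^*$ puts the first basis vector in the required form.

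The heart of the argument is to show $\nL_R = R e_1 \oplus (e_1^\perp \cap \nL_R)$, by showing the orthogonal projection
\[ v \mapsto v - \frac{\langle e_1, v\rangle_Q}{\langle e_1, e_1\rangle_Q} e_1 \]
lands in $\nL_R$ for every $v \in \nL_R$. Since $|2|=1$, we have $\nu(\langle e_1, e_1\rangle_Q) = \nu(2 Q_\nL(e_1)) = \nu_1$, so it suffices to check $\nu(\langle e_1, v\rangle_Q) \ge \nu_1$. Suppose instead $\nu(\langle e_1, v\rangle_Q) < \nu_1$. Expanding
\[ Q_\nL(e_1 + v) = Q_\nL(e_1) + \langle e_1, v\rangle_Q + Q_\nL(v), \]
the middle term strictly dominates in valuation (the first by assumption, the third by minimality of $\nu_1$), so $\nu(Q_\nL(e_1+v)) < \nu_1$, contradicting the choice of $e_1$. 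Hence the splitting holds.

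Applying the induction hypothesis to the rank-$(m-1)$ sublattice $e_1^\perp \cap \nL_R$ (with the restriction of $Q_\nL$) yields the remaining basis vectors $e_2, \ldots, e_m$ with $Q_\nL(e_i) = \varepsilon_i p^{\nu_i}$ and $\nu_2 \le \cdots \le \nu_m$; the inequality $\nu_1 \le \nu_2$ follows from the minimality of $\nu_1$, since $e_2 \in \nL_R$. The field case is handled either as a degenerate DVR (with trivial valuation) or directly by standard Gram--Schmidt, where the only requirement is that $2$ be invertible. The main obstacle is the projection step: without the hypothesis $|2|=1$, one only has $\nu(\langle e_1,e_1\rangle_Q) \ge \nu_1$, and in equal or residue characteristic $2$ the identity $\langle e_1,e_1\rangle_Q = 2 Q_\nL(e_1)$ can force $\langle e_1,v\rangle_Q/\langle e_1,e_1\rangle_Q \notin R$ even for $v \in \nL_R$ satisfying $\nu(\langle e_1, v\rangle_Q) \ge \nu_1$, which is precisely why orthogonal diagonalization fails over $\Z_2$ and necessitates the binary form blocks appearing in the Jordan decomposition.
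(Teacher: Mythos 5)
The paper states this lemma without proof, labeling it ``well-known''; it is the standard orthogonal splitting (Jordan decomposition) for quadratic lattices over a DVR in which $2$ is a unit. Your argument is a correct rendition of the usual proof. The key step --- that $\nu(\langle e_1, v\rangle_Q) \ge \nu_1$ for every $v \in \nL_R$, obtained by expanding $Q_\nL(e_1 + v) = Q_\nL(e_1) + \langle e_1, v\rangle_Q + Q_\nL(v)$ and invoking minimality of $\nu_1$ --- is exactly right, and you correctly isolate $|2| = 1$ as the single place where the hypothesis enters, namely to get $\nu(\langle e_1, e_1\rangle_Q) = \nu_1$ rather than merely $\ge \nu_1$, so that the projection coefficient lies in $R$. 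Two minor remarks. First, the explicit primitivity check for $e_1$ is redundant: once you have established $\nL_R = R e_1 \oplus (e_1^\perp \cap \nL_R)$, the generator of the rank-one direct summand $R e_1$ is automatically primitive, so that paragraph can be dropped. Second, the ``or a field'' alternative in the statement must still be read as requiring $2 \ne 0$ (equivalently, the residue field has characteristic $\ne 2$), since, as you observe, the projection argument genuinely fails otherwise; in effect the two alternatives in the hypothesis collapse to the single condition that $2$ be a unit in $R$.
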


\begin{LEMMA}\label{ZYKLISCH}
Let $p\not=2$. Assume $\nL_\Zp^*/\nL_\Zp$ is cyclic. Then 
\[ \SO(\nL_\Zp)/\SO'(\nL_\Zp) = \begin{cases} 1 & \text{if } \nu=0 \text{ or } m=1, \\ \Z/2\Z & \text{otherwise, } \end{cases} \]
where $p^\nu = |D(\nL_\Zp)|^{-1}$ is the order of $\nL_\Zp^*/\nL_\Zp$.
\end{LEMMA}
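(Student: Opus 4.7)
The plan is to apply the diagonalization from Lemma \ref{KITAOKARRR} and then analyze the natural homomorphism $\rho: \SO(\nL_\Zp) \to \Aut(\nL_\Zp^*/\nL_\Zp) \cong (\Z/p^\nu\Z)^*$ directly.

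First, using $p \neq 2$ and Lemma \ref{KITAOKARRR}, I pick an orthogonal basis $e_1,\dots,e_m$ with $Q_\nL(x) = \sum_i \varepsilon_i p^{\nu_i} x_i^2$, $\varepsilon_i \in \Zp^*$. Then $\nL_\Zp^*/\nL_\Zp \cong \bigoplus_i \Z/p^{\nu_i}\Z$, and cyclicity of this $p$-group forces at most one $\nu_i$ to be nonzero; reordering, I may assume $\nu_1 = \cdots = \nu_{m-1} = 0$ and $\nu_m = \nu$. The edge cases are then immediate: if $\nu = 0$ then $\nL_\Zp^* = \nL_\Zp$ and $\SO' = \SO$ tautologically, while if $m = 1$ every isometry of a rank-one form is $\pm\id$, so $\SO(\nL_\Zp) = \{1\}$.

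The main case is $m \ge 2, \nu \ge 1$. Here $\nL_\Zp^* = \langle e_1, \ldots, e_{m-1}, p^{-\nu} e_m \rangle$ and the class of $p^{-\nu}e_m$ generates the quotient. For $\alpha \in \SO(\nL_\Zp)$ write $\alpha(e_m) = \sum_i a_i e_i$ with $a_i \in \Zp$. The requirement $\alpha(p^{-\nu} e_m) \in \nL_\Zp^*$ forces $p^\nu \mid a_i$ for $i < m$, and then $\rho(\alpha)$ acts as multiplication by $a_m \bmod p^\nu$. Expanding the isometry condition $Q_\nL(\alpha(e_m)) = p^\nu \varepsilon_m$, substituting $a_i = p^\nu a'_i$ for $i < m$, and dividing by $p^\nu$, one obtains $\varepsilon_m a_m^2 \equiv \varepsilon_m \pmod{p^\nu}$, i.e.\ $a_m^2 \equiv 1 \pmod{p^\nu}$. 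Since $p$ is odd, $(\Z/p^\nu\Z)^*$ is cyclic, so the only square roots of $1$ are $\pm 1$; hence $\im(\rho) \subseteq \{\pm 1\}$.

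To see that $-1$ actually occurs, the isometry $\sigma: e_1 \mapsto -e_1,\ e_m \mapsto -e_m,\ e_i \mapsto e_i$ otherwise, has determinant $(-1)^2 = +1$ and so lies in $\SO(\nL_\Zp)$, while acting as $-1$ on $\nL_\Zp^*/\nL_\Zp$; this is where the hypothesis $m \ge 2$ is needed. I expect no real obstacle: the only step requiring care is the passage from preservation of $\nL_\Zp^*$ to the divisibility $p^\nu \mid a_i$ for $i<m$, but this is direct linear algebra in the diagonal basis.
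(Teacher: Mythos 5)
Your argument is correct and is essentially the paper's own proof: both diagonalize via Lemma \ref{KITAOKARRR}, reduce the isometry condition on the generator $p^{-\nu}e_m$ of $\nL_\Zp^*/\nL_\Zp$ to the congruence $a_m^2 \equiv 1 \pmod{p^\nu}$, conclude (using $p\neq 2$) that the image in $\Aut(\nL_\Zp^*/\nL_\Zp)$ lies in $\{\pm1\}$, and exhibit a product of two reflections to hit $-1$ when $m\ge 2$. The only cosmetic difference is that you compute with $\alpha(e_m)\in\nL_\Zp$ and first extract the divisibility $p^\nu\mid a_i$ ($i<m$), whereas the paper writes the image of $p^{-\nu}e_m$ directly in the dual basis, which skips that intermediate step; both routes lead to the identical congruence.
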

\begin{proof}
In the representation given by Lemma~\ref{KITAOKARRR}
$\nu_m$ is equal to $\nu$ and all other $\nu_i$ vanish.
Hence $v := p^{-\nu}e_m$ is a generator of $\nL_\Zp^*/\nL_\Zp$ and $Q(v) = \varepsilon_m p^{-\nu}$.
Let $v'$ be its image under an arbitrary isometry.
$v'$ has a representation
\[ v' = \sum_{i<m} \alpha_i e_i + p^{-\nu} \alpha_m e_m \qquad \alpha_i \in \Zp \]
and
\[ Q(v') = \sum_{i<m} \varepsilon_i \alpha_i^2 + \varepsilon_m p^{-\nu} \alpha_m^2 = \varepsilon_m p^{-\nu}. \]
From this it follows
\[ \alpha_m^2 \equiv 1 \mod p^{\nu}, \]
hence $(p \not= 2)$
\[ \alpha \equiv \pm 1 \mod p^{\nu}. \]
The occurring sign defines a character of the orthogonal group. An element is
in its kernel, precisely if it is in the discriminant kernel. Moreover, if $m>1$ there are elements in 
$\SO$, which yield sign $-1$, for example composition of reflection along $e_m$ and any $e_i, i<m$.
\end{proof}

\begin{LEMMA}\label{KITAOKAC531}
Assume $p\not=2$ and let
\[ \nL_\Zp = \nM_\Zp \perp \nM'_\Zp = \nN_\Zp \perp \nN_\Zp', \]
with $\beta: \nM_\Zp \cong \nN_\Zp$. Then we have
\[ \nM_\Zp' \cong \nN_\Zp'. \]
In particular, there exists an isometry $\alpha \in \SO(\nL_\Zp)$ with $\alpha(\nM_\Zp)=\nN_\Zp$.
If $\nM_\Zp$ is unimodular, we may choose $\alpha \in \SO'(\nL_\Zp)$. If $\nM'_\Zp$ has a vector of unit length, we may 
assume in addition, that $\alpha|_{\nM_\Zp} = \beta$.
\end{LEMMA}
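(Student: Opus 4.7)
The plan breaks into four stages matching the four assertions. First, to establish $\nM'_\Zp \cong \nN'_\Zp$, I would diagonalize all four lattices via Lemma \ref{KITAOKARRR} and invoke the standard fact that for $p \neq 2$ the isomorphism class of a $\Zp$-lattice is determined by its Jordan data (scales and ranks of the Jordan components, together with the discriminants of the unimodular pieces modulo $(\Zp^*)^2$); cancelling the common Jordan data of $\nM_\Zp \cong \nN_\Zp$ from the two decompositions of $\nL_\Zp$ forces equal Jordan data for $\nM'_\Zp$ and $\nN'_\Zp$. Then, fixing any isometry $\gamma_0 : \nM'_\Zp \cong \nN'_\Zp$, the direct sum $\alpha_0 = \beta \oplus \gamma_0$ lies in $O(\nL_\Zp)$ with $\alpha_0(\nM_\Zp) = \nN_\Zp$ and $\alpha_0|_{\nM_\Zp} = \beta$. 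If $\det \alpha_0 = -1$, I would replace $\gamma_0$ by $\gamma_0 \circ \iota$, where $\iota \in O(\nM'_\Zp)$ negates a single basis vector in a diagonal basis of $\nM'_\Zp$; this preserves the restriction to $\nM_\Zp$ and puts $\alpha_0$ in $\SO(\nL_\Zp)$, proving the unconditional existence statement.

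Assume now $\nM_\Zp$ is unimodular. Lemma \ref{KITAOKATSSS} yields the canonical identifications $\nL_\Zp^*/\nL_\Zp \cong (\nM'_\Zp)^*/\nM'_\Zp \cong (\nN'_\Zp)^*/\nN'_\Zp$ (and $\nN_\Zp$ is automatically unimodular too by comparing discriminants). The action of $\alpha_0$ on this discriminant group is some $\sigma \in O(q)$, where $q$ is the induced discriminant quadratic form. I would correct $\sigma$ to the identity by right multiplication with $\rho = \rho_M \oplus \rho_{M'} \in O(\nM_\Zp) \times O(\nM'_\Zp) \subset O(\nL_\Zp)$, which preserves the decomposition $\nM_\Zp \perp \nM'_\Zp$ so that $\alpha_0 \rho$ still sends $\nM_\Zp$ to $\nN_\Zp$. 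Since $\rho_M$ acts trivially on the discriminant group (by unimodularity of $\nM_\Zp$), the action of $\alpha_0 \rho$ equals $\sigma \circ [\rho_{M'}]$. Invoking the classical surjectivity of $O(\nM'_\Zp) \to O(q)$ for $p \neq 2$, I can choose $\rho_{M'}$ realizing $\sigma^{-1}$, and then $\rho_M \in O(\nM_\Zp)$ of matching determinant (via basis negation) to ensure $\det\rho = 1$, so that $\alpha_0 \rho \in \SO'(\nL_\Zp)$.

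Finally, if $\nM'_\Zp$ has a unit-length vector $v$, then the reflection $\sigma_v \in O(\nM'_\Zp)$ has determinant $-1$ and acts trivially on $(\nM'_\Zp)^*/\nM'_\Zp$. Thus in the previous step I can take $\rho_M = \mathrm{id}_{\nM_\Zp}$ (which is precisely what preserves $\alpha_0 \rho|_{\nM_\Zp} = \beta$) and, if $\det \rho_{M'} = -1$, replace $\rho_{M'}$ by $\rho_{M'} \circ \sigma_v$; this fixes the determinant without affecting the discriminant action. The main obstacle is the Nikulin-type surjectivity of $O(\nM'_\Zp) \to O(q)$ for $p \neq 2$, i.e.\ lifting automorphisms of the discriminant form to genuine lattice isometries; for $p$ odd this can be reduced Jordan-block by Jordan-block using diagonalization together with the elementary fact that rank-$1$ pieces admit discriminant-trivial isometries of both determinants, the rest of the argument being routine bookkeeping around determinants and restrictions.
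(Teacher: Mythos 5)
Your proof is correct, but it takes a genuinely different route for the discriminant-kernel claim than the paper does. The paper proceeds by induction on $\dim \nM_\Zp$: the base case uses the observation that for $v' = \beta(v)$ one of $v \pm v'$ has unit length (since $Q(v+v')+Q(v-v') = 4Q(v)$ is a unit), so the reflection along it lies in the discriminant kernel and swaps $\langle v\rangle$ and $\langle v'\rangle$; the induction step splits off a unit vector of $\nM_\Zp$ and lifts the resulting isometry of the orthogonal complement via Lemma~\ref{DISKRIMINANTENKERN}, staying entirely within the elementary reflection calculus of the appendix. You instead invoke the surjectivity of $O(\nM'_\Zp) \to O\bigl((\nM'_\Zp)^*/\nM'_\Zp\bigr)$ for $p \neq 2$ (a Nikulin/Wall-type result), which is a considerably heavier external input but shortens the argument once accepted. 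Your correction mechanism is sound: right-multiplying by $\rho = \rho_M \oplus \rho_{M'}$ preserves $\alpha_0(\nM_\Zp)=\nN_\Zp$, choosing $\rho_{M'}$ a lift of $\sigma^{-1}$ cancels the discriminant action (with $\rho_M$ acting trivially since $\nM_\Zp$ is unimodular), and $\rho_M$ or, for the final claim, the reflection $\sigma_v$ along a unit vector of $\nM'_\Zp$ repairs the determinant without disturbing either the discriminant action or the restriction to $\nM_\Zp$; this mirrors the paper's closing remark about reflections in the discriminant kernel of $\nM'_\Zp$. One caveat in your optional sketch of the surjectivity theorem: a rank-one block $\langle \varepsilon p^k\rangle$ with $k>0$ does \emph{not} have a determinant $-1$ isometry acting trivially on its discriminant group (its orthogonal group $\{\pm1\}$ acts faithfully on $\Z/p^k\Z$), so adjusting determinants block-by-block needs more care than your parenthetical suggests; since you treat the surjectivity as a black box this does not affect the main argument, but the sketch as stated would not go through.
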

\begin{proof}
The first part of the assertion is shown in \cite[Corollary 5.3.1]{Kitaoka}. It remains to see that we may choose the
isometry in the discriminant kernel, if $\nM_\Zp$ is unimodular: For this we proceed by induction on the dimension on $\nM_\Zp$.
If $\nM_\Zp$ is one dimensional, let $v$ be a generating vector of unit length
and $v'$ its image under $\beta$. One of the vectors $v+v'$ or $v-v'$ has unit length, call it $w$. The reflection
along $w$ lies obviously in $O'(\nL_\Zp)$ and interchanges $<v>$ and $<v'>$. 
By composition with the reflection along $v'$, we may assume, that it lies in $\SO'(\nL_\Zp)$.  

Assume now $\dim(\nM_\Zp)>1$. Let $v$ be a vector of unit length in $\nM_\Zp$ and $v'$ its image. We have
\[ \nL_\Zp = <v> \perp v^{\perp } = <v'> \perp {v'}^{\perp}, \]
(Lemma \ref{KITAOKATSSS}) and
\[ v^{\perp \nL_\Zp} = v^{\perp \nM_\Zp} \perp \nM_\Zp' \qquad {v'}^{\perp \nL_\Zp} = {v'}^{\perp \nN_\Zp} \perp \nN_\Zp', \]
and we have an isometry (case above)
in the discriminant kernel, which maps $<v>$ to  $<v'>$, and hence $v^\perp$ to ${v'}^\perp$. $v^{\perp \nM_\Zp}$ and ${v'}^{\perp \nN_\Zp}$ are now isomorphic (again by the case above) hence (induction hypothesis), there is an isometry in $\SO'({v'}^\perp)$ mapping the image of $v^{\perp \nM_\Zp}$ to ${v'}^{\perp \nN_\Zp}$. It lifts to $\SO'(\nL_\Zp)$. Composition with the first isometry gives the induction step. 
The proof shows, that we may arrange $\alpha|_{\nM_\Zp} = \beta$ if there is a reflection in $\SO'(\nM'_\Zp)$.
\end{proof}

\begin{LEMMA}\label{LEMMA1ORBIT}
Let $\nL_\Zp$ $(\dim(\nL) \ge 3)$ be a unimodular lattice, $p \not=2$, and $q \in \Zp \setminus \{0\}$.

Then $\SO(\nL_\Zp)$ acts transitively on $\{ \alpha \in \nIsome(<q>, \nL_\Zp) \where \im(\alpha) \text{ is saturated} \}$.
In particular, it acts transitively on $\nIsome(<q>, \nL_\Zp)$ with $|q|_p=\frac{1}{p}$.
\end{LEMMA}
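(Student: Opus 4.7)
The plan is to exhibit, for primitive $v, v' \in \nL_\Zp$ of the same length $q$, an isometry in $\SO(\nL_\Zp)$ carrying $v$ to $v'$, by first splitting off a unimodular sublattice containing $v$ (resp.\ $v'$) and then applying the Witt-type extension result, Lemma~\ref{KITAOKAC531}. The argument divides according to $\nu_p(q)$.

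First I would dispatch the case $\nu_p(q) = 0$. Since $\Zp v$ is then itself unimodular, Lemma~\ref{KITAOKATSSS} gives $\nL_\Zp = \Zp v \perp v^\perp$, and likewise $\nL_\Zp = \Zp v' \perp v'^\perp$. The orthogonal complements are unimodular of rank $m - 1 \ge 2$, hence contain unit-length vectors, so Lemma~\ref{KITAOKAC531} applied to the isometry $\Zp v \to \Zp v'$, $v \mapsto v'$, yields the desired $\alpha$ (even in $\SO'(\nL_\Zp)$). This also settles the final clause of the lemma: if $|q|_p = p^{-1}$ then every $v \in \nL_\Zp$ with $Q(v) = q$ is automatically primitive, since $v \in p\nL_\Zp$ would force $Q(v) \in p^2\Zp$.

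For $\nu_p(q) \ge 1$, the idea is to enlarge $\Zp v$ to a unimodular rank-two sublattice. Primitivity of $v$ together with unimodularity of the pairing on $\nL_\Zp$ makes $\langle v,-\rangle_Q : \nL_\Zp \to \Zp$ surjective, so I pick $w \in \nL_\Zp$ with $\langle v,w\rangle_Q = 1$. Then $M := \Zp v + \Zp w$ has Gram determinant $4qQ(w) - 1$, which is a unit of $\Zp$ since $p \ne 2$ and $4qQ(w) \in p\Zp$; hence $M$ is unimodular of rank two. Lemma~\ref{KITAOKATSSS} then yields $\nL_\Zp = M \perp M^\perp$ with $M^\perp$ unimodular of rank $m - 2 \ge 1$, and the same construction produces $\nL_\Zp = M' \perp M'^\perp$ with $v' \in M'$. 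Moreover $M \cong M'$ abstractly: a rank-two unimodular form over $\Zp$ is classified by its discriminant modulo $(\Zp^*)^2$, and both $\det(M),\det(M')$ lie in $-1 + p\Zp = -1 \cdot (1 + p\Zp)$; every $1$-unit is a square in $\Zp^*$ for $p \ne 2$ by Hensel's lemma, so both discriminants represent the class of $-1$.

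The hard part will be promoting this abstract isomorphism $M \cong M'$ to one sending $v \mapsto v'$. For this I would diagonalize $M \cong \langle \varepsilon_1, \varepsilon_2\rangle$ and analyze primitive solutions of $\varepsilon_1 x^2 + \varepsilon_2 y^2 = q$ directly. The scheme $\{Q = q\}$ is smooth at primitive points (because $\gamma_Q(v) \not\equiv 0 \pmod{p}$ whenever $v$ is primitive, $\gamma_Q$ being unimodular); combined with Witt's theorem over $\Fp$ — which gives transitivity of $O(\overline{M})$ on nonzero vectors of length $\bar q$, noting that $q \in p\Zp$ forces $\overline M$ to be the hyperbolic plane, so such vectors exist — Hensel's lemma upgrades the $\Fp$-transitivity to transitivity of $O(M)$ on primitive vectors of length $q$, yielding $\beta : M \to M'$ with $\beta(v) = v'$. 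Finally, since $M^\perp$ and $M'^\perp$ are unimodular of the same rank and discriminant $\det(\nL_\Zp)/\det(M)$, they are abstractly isomorphic and contain unit-length vectors, so Lemma~\ref{KITAOKAC531} applied with the unimodular $M$ extends $\beta$ to the required $\alpha \in \SO'(\nL_\Zp) \subseteq \SO(\nL_\Zp)$.
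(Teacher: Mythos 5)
Your proof is correct and reaches the same endpoint by a genuinely different route. The paper's argument is shorter and more hands-on: given a primitive $v$ of length $q$, it diagonalizes the form, picks a coordinate $v_i\in\Zp^*$, and lets $v'$ be the reflection of $v$ in $e_i$. The Gram matrix of $\Zp v\oplus\Zp v'$ has determinant $16\varepsilon_i v_i^2(q-\varepsilon_i v_i^2)$, which is a unit precisely because $q\in p\Zp$; so $v$ sits inside a unimodular rank-two summand, which is forced to be $H_\Zp$ since it is isotropic mod $p$. Then Lemma~\ref{KITAOKAC531} matches up the two hyperbolic planes, and the explicit $O(H_\Zp)$-transitivity on primitive vectors $(a,q/a)$ finishes it. (The $q\in\Zp^*$ case is not stated separately but is trivial, exactly as in your first paragraph.)

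You replace the reflection trick by picking $w$ with $\langle v,w\rangle_Q=1$ via surjectivity of the unimodular pairing, getting $M=\Zp v+\Zp w$ unimodular with $\det(M)\in -1+p\Zp\subset -(\Zp^*)^2$. You then identify $M\cong H_\Zp$ via the discriminant classification rather than by observing directly that it is isotropic mod $p$, and you establish $O(M)$-transitivity on primitive length-$q$ vectors by smoothness/Hensel rather than the explicit calculation in $H_\Zp$. Both approaches use \ref{KITAOKATSSS} to split off the rank-two piece and \ref{KITAOKAC531} to extend. Yours is more systematic and modular (the Hensel step adapts to other situations); the paper's reflection construction is shorter and avoids the classification of rank-two forms entirely. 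Two small points of precision in your write-up: the fact that $4qQ(w)-1$ is a unit needs only $q\in p\Zp$, not $p\ne2$ (you need $p\ne2$ later, for $1+p\Zp\subset(\Zp^*)^2$); and the Hensel step really wants smoothness of the \emph{orbit map} $O(M)\to\{Q=q\}$, $g\mapsto gv$, at the identity --- equivalently, that $\mathfrak{o}(M)\to v^{\perp}$, $X\mapsto Xv$, is an isomorphism for primitive $v$ --- rather than just smoothness of the fibre $\{Q=q\}$; for $M\cong H_\Zp$ this is an easy direct check, or one can sidestep it entirely by the explicit $(a,q/a)$ parametrization.
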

\begin{proof}
Take any $v$ with $Q_L(v)=q$.
Diagonalize the form (Lemma~\ref{KITAOKARRR}) and take the reflection $v'$ of $v$ at any basis vector $e_i$ with the property that $v_i \in \Z_p^*$ (this must exist,
since otherwise the vector would not be primitive).
We have $p \nmid \langle v, v' \rangle$. Therefore the form on $\Zp v \oplus \Zp v'$ is unimodular, hence $\Zp v \oplus \Zp v'$ is primitive and a direct summand by
Lemma~\ref{KITAOKATSSS}. It is necessarily a hyperbolic plane, since modulo $p$ it represents zero. We have shown that any primitive vector in $\nIsome(<q>, \nL)$ lies in a hyperbolic plane. Now use Lemma~\ref{KITAOKAC531} and the fact, that $\text{O}(H_\Zp)$ (not $\SO$!) acts transitively on primitive vectors of length $q$ on $H_\Zp$. 
\end{proof}

We see that for $p\not=2$, $j>0$, and an unimodular lattice $\nL_\Zpp$, there are precisely $\lfloor\frac{j}{2}\rfloor+1$ orbits of vectors of length $p^{j}$, 
indexed according to their `saturatedness'.

\bibliographystyle{model1b-num-names}
\bibliography{hoermann}
\biboptions{square,comma,sort}







\end{document}